\newtheorem{theorem}{Theorem}[section]
\newtheorem{proposition}{Proposition}[section]
\newtheorem{conjecture}{Conjecture}[section]
\newtheorem{lemma}{Lemma}[section]
\begin{document}
\title[Ideal right-angled polyhedra]{Ideal right-angled polyhedra  in Lobachevsky space}
\author{A.Yu. Vesnin, A.A. Egorov}
\address{Novosibirsk State University and Sobolev Institute of Mathematics, Novosibirsk, 630090, Russia; Tomsk State University, 634050, Tomsk, Russia} 
\email{vesnin@math.nsc.ru}
\address{Novosibirsk State University, Novosibirsk, 630090, Russia, Tomsk State University, 634050, Russia} 
\email{a.egorov2@g.nsu.ru }

\begin{abstract} 
In this paper we consider a class of right-angled polyhedra in three-dimensional Lobachevsky space, all vertices of which lie on the absolute. New upper bounds on volumes in terms the number of faces of the polyhedron are obtained. Volumes of polyhedra with at most 23 faces are computed. It is shown that the minimum volumes are realized on antiprisms and twisted antiprisms. The first 248 values of volumes of ideal right-angled polyhedra are presented. Moreover,  the class of polyhedra with isolated triangles is introduces and there are obtained combinatorial bounds on their existence as well as minimal examples of such polyhedra are given.
\end{abstract}
	
\subjclass[2010]{57M25}	
\keywords{Hyperbolic 3-space, ideal polyhedron, right-angled polyhedron, antiprism} 
\thanks{This work was supported in part by the RFBR (project 19-01-00569)}
	
\maketitle 
	
%\hfill {\it To the 75th anniversary of academician Anatoly Timofeevich Fomenko.}
	
\section*{Introduction} \label{section0}

Applying of computer methods is a powerful tool for the study of three-dimensional hyperbolic manifolds.  For example, the tabulation of manifolds obtained by Dehn surgery on manifolds with cusps led by S.\,V.~Matveev and A.\,T.~Fomenko~\cite{MF} and independently J.~Weeks~\cite{Weeks} to recognizing  the smallest volume closed orientable three-dimensional hyperbolic manifold. Todays it is known as  the \emph{Weeks~-- Matveev~-- Fomenko manifold}. Recall that it can be obtained by surgery on the Whitehead link and its volume is approximately equal to $0.942707$. 
	
In recent years many results appeared on the enumeration and classifica\-tion of three-dimensional hyperbolic manifolds which admit decompositions into polyhedra with prescribed properties.
	
In~\cite{FGGTV} there are described hyperbolic three-dimensional manifolds that can be decomposed into regular ideal tetrahedra (up to 25 tetrahedra in the oriented case and up to 22 tetrahedrons in the non-oriented case). Three-dimensional hyperbolic manifolds that can be subdivided into Platonic poly\-hed\-ra are listed in~\cite{Goerner}. In~\cite{191} all three-dimensional orientable manifolds that can be obtained from various realizations of an octahedron were const\-ruc\-ted and classified. The paper~\cite{Inoue} contains an initial list of 825 bounded right-angled hyperbolic polyhedra.
	
In this paper, the objects of our study are polyhedra which can be realized with right, $\pi/2$, dihedral angles in a three-dimensional space of constant negative curvature $\mathbb H^{3}$, known as hyperbolic space or Lobachevsky space. Moreover,  we will consider only ideal right-angled hyperbolic polyhedra, that is, those for which all vertices lie on the absolute of Lobachevsky space. We will  denote by $\mathcal{IR}$ the class of ideal right-angled three-dimensional hyperbolic polyhedra. Recent results on the theory of right-angled polyhedra in Lobachevsky space and using them for constructing three-dimensional hyperbolic manifolds are given in the survey~\cite{Ves}, see also~\cite{Ero}. The main attention in the survey was given to bounded right-angled polyhedra, while in this paper we will consider the case of ideal polyhedra. We will follow the standard terminology of the theory of hyperbolic manifolds; see, for example,~\cite{Ratcliffe}.
	
Hyperbolic three-dimensional manifolds of finite volume, which can be decomposed into ideal right-angled polyhedra, have been intensively studied in last decade. In particular, due to their close relationship with the right-angled Coxeter groups, and the fact that their fundamental groups have the LERF property, i.e. they are locally extended residually finite groups (each finitely generated subgroup is separable)~\cite{Scott}. Several types of hyperbolic three-dimensional manifolds admitting decomposition into ideal right-angled polyhedra are presented in~\cite{CDW}. Since the volume of a manifold is the sum of the volumes of the polyhedra into which it is decomposed, a description of the volumes of ideal right-angled polyhedra is interesting from this point of view.
	
The paper has the following structure. In  Section~\ref{sec1} we recall some facts about the existence of ideal polyhedra in Lobachevsky spaces, in particular, Andreev's theorem (Theorem~\ref{theoremAndreev}) and Rivin's theorem (Theorem~\ref{theoremRivin}), which give necessary and sufficient conditions for their existence in dimension three. In Section~\ref{sec2} the notion of twisted antiprism is introduced and a formula for volumes of right-angled twisted antiprisms is given (Theorem~\ref{theorem2.5}). The Table~\ref{table-2} provides information on the number of ideal right-angled polyhedra in Lobachevsky space with at most $23$ faces and indicates the minimum and maximum volume values for each number of faces. The carried out calculations allow to propose a conjecture which polyhedra are of smallest volumes for an arbitrary number of faces (Conjecture~\ref{hyp2.1}). In Section~\ref{sec3} we obtain new upper bounds on the volume of an ideal right-angled polyhedron in terms of the number of its faces (Theorems~\ref{theorem3.2} and~\ref{theorem3.3}). Also, we present ideal right-angled polyhedra of smallest and largest volume with at most $23$ faces (see Tables~\ref{table-40} and~\ref{table-50}) and the first $248$ values of volumes of ideal right-angled polyhedra (see Table~\ref{table-100}). In Section~\ref{sec4} we introduce the notion of polyhedra with isolated triangles and give a lower bound on the number of faces of such polyhedra (Proposition~\ref{prop4.1}). For the minimum possible number of faces equals to $26$, two examples of polyhedra with isolated triangles are given.
	
\section{Existence} \label{sec1} 
	
\subsection{Dimension and number of cusps}
	
It was shown in~\cite{De} that right-angled polyhedra of finite volume can exist in $\mathbb H^{n} $ only for dimensions $n <13$. Such polyhedra can have both finite vertices and cusps.  At the same time~\cite{Vin84}, in dimensions $n>4$ there are no compact right-angled polyhedra. Thus, a right-angled polyhedron in  $\mathbb H^{n}$, $n\geq 5$, has at least one cusp. It was shown in~\cite{Nonaka}  that in high dimensions right-angled polyhedra of finite volume should have a lot of cusps. Namely, the lower bounds $c(n)$ of the number of cusps for dimensions $n <13$ are given in Table~\ref{table-1}. 
\begin{table}[ht] \caption{Lower bounds for the number of cusps.} \label{table-1} 
\begin{center} 
\begin{tabular}{|c|r|r|r|r|r|r|r|} \hline 
n & 6 & 7& 8& 9& 10 & 11 & 12 \\ \hline
c(n) & 3 & 17 & 36 & 91 & 254 & 741 & 2200 \\ \hline
\end{tabular} 
\end{center}
\end{table}
	
We will be interested in right-angled hyperbolic polyhedra in which all vertices are cusps. Such polyhedra are called \emph{ideal}. 
It was shown in~\cite{Ko2012} that in $\mathbb H^{n}$, $n \geq 7$, there are no ideal right-angled  polyhedra. 
Examples of three-dimensional and four-dimensional ideal right-angled hyperbolic polyhedra will be given below. 
	
\subsection{Three-dimensional case}
	
Necessary and sufficient conditions for a combinatorial polyhedron $P$ to belong to the class $\mathcal IR$ can be obtained as a very special case of E.\,M.~Andreev's theorem~\cite{And70-2} on acute-angled polyhedra of finite volume.
	
\begin{theorem} \cite{And70-2} \label{theoremAndreev}
Let $P$ be an abstract three-dimensional polyhedron with three or four faces meeting at each vertex, and $P$ is not a simplex. The following conditions are necessary and sufficient conditions for the existence in $\mathbb H^{3}$  of a convex polyhedron of finite volume of a combinatorial type $P$ with angles $\alpha_{ij} \leq \pi /2$: 
\begin{itemize}
\item[0.] $0 < \alpha_{ij} \leq \pi/2$. 
\item[1.] If $F_{ijk}$ is a vertex of $P$, then $\alpha_{ij} + \alpha_{jk} + \alpha_{ki} \geq \pi$, if $F_{ijkl}$ is a vertex, then $\alpha_{ij} + \alpha_{jk} + \alpha_{kl} + \alpha_{li} = 2 \pi$.  
\item[2.] If $F_{i}$, $F_{j}$, $F_{k}$ is a triangular prismatic element, then $\alpha_{ij} + \alpha_{jk} + \alpha_{ki} < \pi$.
\item[3.] If $F_{i}$, $F_{j}$, $F_{k}$, $F_{l}$ is a quadrilateral prismatic element, then $\alpha_{ij} + \alpha_{jk} + \alpha_{kl} + \alpha_{li} < 2 \pi$.
\item[4.] If $P$ is a triangular prism with bases $F_{1}$ and $F_{2}$, then $\alpha_{13} + \alpha_{14} + \alpha_{15} + \alpha_{23} + \alpha_{24} + \alpha_{25} < 3 \pi$. 
\item[5.] If among the faces $F_{i}$, $F_{j}$, $F_{k}$ there are adjacent $F_{i}$ and $F_{j}$, $F_{j}$ and $F_{k}$, but $F_{i}$ and $F_{k}$ are not adjacent, but meet at a common vertex and all three faces don't meet at one vertex, then $\alpha_{ij} + \alpha_{jk} < \pi$. 
\end{itemize}
\end{theorem}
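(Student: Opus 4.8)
The plan is to prove both directions by separating the relatively elementary necessity from the substantial sufficiency, handling the latter by the continuity (deformation) method. Throughout, I would work in the projective Klein or hyperboloid model, representing each face $F_i$ by an outward unit normal $e_i$ in the Lorentzian space $\mathbb{R}^{3,1}$, so that $P$ is the intersection of the half-spaces $\langle x, e_i\rangle \le 0$ and the Gram matrix $G = (\langle e_i, e_j\rangle)$ records $\langle e_i, e_j\rangle = -\cos\alpha_{ij}$ on adjacent faces and $\langle e_i, e_j\rangle \le -1$ on non-adjacent ones. The hypotheses that every vertex is trivalent or $4$-valent and that $P$ is not a simplex are what keep the combinatorial circuit analysis finite and rule out the degenerate case in which rigidity and realizability behave anomalously.

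\textbf{Necessity.} Given a realized finite-volume polyhedron of type $P$ with $\alpha_{ij}\le\pi/2$, condition (0) is the hypothesis. For condition (1) I would pass to the link of each vertex $v$: this link is a convex spherical polygon if $v$ is finite and a convex Euclidean polygon if $v$ is ideal, with edge angles exactly the $\alpha_{ij}$ through $v$. Finiteness of volume forces a trivalent vertex to have a spherical (finite) or Euclidean (ideal) triangular link, whence angle sum $\ge\pi$, and forces a $4$-valent vertex to be ideal with a Euclidean quadrilateral link, whence angle sum exactly $2\pi$. Conditions (2)--(5) concern prismatic circuits, i.e.\ closed transversal curves on $\partial P$ crossing a cyclic sequence of edges that do not bound a single vertex or face. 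Each such circuit singles out a principal submatrix of $G$ whose normals must span a subspace of the correct signature; expanding the resulting determinant condition and using $\langle e_i,e_j\rangle\le -1$ on the non-adjacent faces of the circuit yields precisely the strict inequalities of (2)--(5), with the four separate conditions simply enumerating the finitely many circuit types (triangular, quadrilateral, the triangular-prism exception, and the two-adjacent configuration of (5)) that can occur under the valence hypothesis.

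\textbf{Sufficiency.} This is the hard direction, and I would run the continuity method of Andreev. Let $A_P \subset (0,\pi/2]^E$ be the set of angle assignments satisfying (0)--(5), let $\mathcal P$ be the space of finite-volume hyperbolic polyhedra of combinatorial type $P$ with non-obtuse angles modulo isometry, and let $\alpha\colon \mathcal P \to A_P$ be the angle map. The argument proceeds in four steps: (i) show $A_P$ is nonempty and connected, it being the relative interior of a convex polytope cut out by the linear inequalities (2)--(5) together with the vertex relations of (1); (ii) prove $\alpha$ is injective, a Cauchy-type rigidity statement that a non-obtuse finite-volume polyhedron is determined up to isometry by its dihedral angles, which I would establish by an infinitesimal rigidity argument combined with a maximum-principle and sign-count over the edges around each face; (iii) prove $\alpha$ is a local homeomorphism via the implicit function theorem, once infinitesimal rigidity shows the derivative of $\alpha$ is an isomorphism onto the tangent space of $A_P$, the matching of dimensions then being automatic; and (iv) prove $\alpha$ is proper onto $A_P$. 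Combining (ii)--(iv), $\alpha$ is a proper injective local homeomorphism onto an open-and-closed subset of the connected set $A_P$, hence a homeomorphism, which delivers existence and uniqueness at one stroke.

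The main obstacle is step (iv), properness. A limit of realizable polyhedra can fail to be of type $P$ in several ways: a face can collapse, a vertex can escape through the absolute, or a family of faces can converge so that several become mutually tangent. Each such degeneration produces, in the limit, a vanishing subdeterminant of $G$ localized on a prismatic circuit or a face boundary, and the entire purpose of conditions (2)--(5) is that they are exactly the inequalities that would be violated in these limits. Hence the heart of the proof is a careful case analysis showing that convergence of the angles to an interior point of $A_P$ is incompatible with every mode of degeneration; I would carry this out using compactness of the space of normalized normal-vector configurations together with the strict inequalities (2)--(5) to exclude each degenerate limit in turn. This is precisely the point at which Andreev's original argument needed later repair, and it is where I expect to spend the bulk of the effort.
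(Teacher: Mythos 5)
First, a point of comparison you could not have known: the paper does not prove this statement at all. Theorem~\ref{theoremAndreev} is quoted verbatim from Andreev's article \cite{And70-2} as an imported result, so there is no in-paper proof to measure your sketch against; it has to be judged on its own terms.

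Your overall route --- necessity from vertex links and circuit analysis, sufficiency by the continuity method for the angle map $\alpha\colon\mathcal P\to A_P$, with $A_P$ convex (the conditions are linear) and hence connected --- is the classical and correct strategy. But there is a genuine gap at the conclusion of your argument: a proper injective local homeomorphism has image open and closed in $A_P$, and connectedness of $A_P$ yields surjectivity \emph{only if the image is nonempty}. Nowhere in your outline do you produce a single finite-volume hyperbolic polyhedron of the given combinatorial type to serve as a base point; the empty image is also open and closed. In Andreev's proof, and in the Roeder--Hubbard--Dunbar corrected account of the compact case, this base-point existence is a substantial separate induction over combinatorial types (reducing $P$ by Whitehead-type moves to prisms and simpler polyhedra while preserving nonemptiness of the angle space), and it is in the combinatorial lemma underpinning that induction --- not in the properness and degeneration analysis, as you assert --- that Andreev's original error was located. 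So the step to which you budget ``the bulk of the effort'' is not the one that historically failed, and the step that did fail is entirely absent from your plan.

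Two further concrete problems. First, your claim that each of conditions (2)--(5) falls out of ``expanding the determinant condition'' on a principal submatrix of the Gram matrix is not right as a uniform mechanism: the signature argument does handle prismatic $3$-circuits (angle sum $\geq\pi$ forces the three planes to meet in a finite or ideal point, so the circuit is a vertex link rather than prismatic) and, with care, $4$-circuits, but condition (4) is a global inequality special to the triangular prism and condition (5) concerns two faces meeting only at a $4$-valent vertex; each requires its own separate elementary argument, which your sketch does not supply. Second, because condition (1) admits equality --- ideal trivalent vertices, and every $4$-valent vertex forced to have angle sum exactly $2\pi$ (forced, incidentally, by non-obtuseness rather than by ``finiteness of volume'' as you write: a finite $4$-valent vertex would have a convex spherical quadrilateral link of angle sum greater than $2\pi$, impossible with all angles at most $\pi/2$) --- the set $A_P$ is not open in $(0,\pi/2]^E$, and $\mathcal P$ contains noncompact finite-volume polyhedra. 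Your step (iii), ``local homeomorphism via the implicit function theorem,'' must therefore be carried out on boundary strata where vertices transit between finite and ideal, and the matching of dimensions you call ``automatic'' is precisely what is not automatic there. Handling these strata is the specific additional difficulty of the finite-volume version proved in \cite{And70-2}, as opposed to the compact case, and your outline passes over it.
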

	
For the case of an ideal right-angled polyhedron the conditions are significantly simplified since all vertices are four-valent and all dihedral angles are $\pi/2$.
	
We also recall a result of I.~Rivin~\cite{Riv}, concerning arbitrary convex ideal hyperbolic polyhedra, which is formulated in terms of the dual graph.
	
\begin{theorem} \cite{Riv} \label{theoremRivin}
Let $P$ be a plane polyhedral graph with a weight $w(e)$ assigned to each edge $e$. Let $P^{*}$ be the dual graph of $P$ and assume that the weight $w^{*}(e^{*}) = \pi - w(e)$ is assigned to the edge $e^{*}$ dual to $e$. Then $P$ can be realized as a convex ideal polyhedron in $\mathbb H^{3}$  with dihedral angles $w(e)$ at its edges if and only if the following conditions are satisfied:
\begin{itemize}
\item[\rm (1)] $0 < w^{*}(e^{*}) < \pi$ for all $e$; 
\item[\rm (2)] if edges $e^{*}_{1}, e^{*}_{2}, \ldots, e^{*}_{k}$ bound a face in $P^{*}$, then
$$
w^{*}(e^{*}_{1}) + w^{*}(e^{*}_{2}) + \ldots + w^{*}(e^{*}_{k}) = 2 \pi ;
$$
\item[\rm (3)]  if edges $e^{*}_{1}, e^{*}_{2}, \ldots, e^{*}_{k}$ form a cycle in $P^{*}$, that does not bound a face, then
$$
w^{*}(e^{*}_{1}) + w^{*}(e^{*}_{2}) + \ldots + w^{*}(e^{*}_{k}) > 2 \pi .
$$
\end{itemize}
\end{theorem}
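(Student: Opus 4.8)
Write $\theta_e = w(e)$ for the prospective dihedral angle at an edge $e$, so $w^*(e^*) = \pi - \theta_e$ and condition (1) reads simply $0 < \theta_e < \pi$. I would treat the two implications separately, the forward (necessity) direction being essentially geometric and the converse (sufficiency) requiring a global deformation argument. For necessity, assume $P$ is realized as a convex ideal polyhedron. Condition (1) is immediate, since along each edge the two incident faces meet at a dihedral angle that is positive and less than $\pi$ by convexity and nondegeneracy. For condition (2), fix an ideal vertex $v$ and intersect $P$ with a small horosphere centred at $v$; as horospheres inherit a Euclidean metric, the section is a Euclidean polygon whose interior angle at the corner cut from the edge $e$ equals $\theta_e$. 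The edges through $v$ are exactly those dual to the edges $e_1^*,\dots,e_k^*$ bounding the face of $P^*$ that corresponds to $v$, so the Euclidean angle-sum identity $\sum_i \theta_{e_i} = (k-2)\pi$ rearranges to $\sum_i w^*(e_i^*) = \sum_i (\pi - \theta_{e_i}) = 2\pi$.

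For condition (3), a cycle $e_1^*,\dots,e_k^*$ in $P^*$ that does not bound a face selects a prismatic family of faces of $P$ whose consecutive intersection edges form a closed circuit that is not the link of a single vertex. Here I would argue by contradiction: if the sum $\sum_i w^*(e_i^*)$ were equal to $2\pi$, then a Gauss--Bonnet / horospherical comparison along this circuit would force it to bound a totally geodesic disc and collapse the polyhedron, so the strict convexity of $P$ in $\mathbb{H}^3$ yields $\sum_i w^*(e_i^*) > 2\pi$.

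For sufficiency I would run the method of continuity. Let $\mathcal{A}(P)$ be the set of edge-weightings satisfying (1)--(3); after imposing the linear equalities (2) it is an open, convex (hence connected and simply connected) subset of an affine space, and a comparison of its defining equations with the moduli of ideal polyhedra of combinatorial type $P$ shows that it has the same dimension as the space $\mathcal{C}(P)$ of convex ideal polyhedra of type $P$ modulo isometries of $\mathbb{H}^3$. The necessity direction shows the dihedral-angle map $\Phi\colon \mathcal{C}(P) \to \mathcal{A}(P)$ is well defined. The strategy is to prove that $\Phi$ is simultaneously a local diffeomorphism and a proper map: a proper local diffeomorphism onto a connected and simply connected target is a homeomorphism, which delivers existence (from surjectivity) together with Rivin's rigidity (from injectivity).

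The crux, and the \textbf{main obstacle}, is the local rigidity underlying the first property: an infinitesimal deformation of an ideal polyhedron fixing every dihedral angle must be trivial. Because ideal edges are infinitely long, I would work with regularized (horospherically truncated) edge lengths $\ell_e$ and exploit the Schl\"afli variation $d\,\mathrm{Vol} = -\tfrac12 \sum_e \ell_e\, d\theta_e$, together with the expression of $\mathrm{Vol}$ through the Lobachevsky function, to show that the volume is a strictly concave function on $\mathcal{A}(P)$; this concavity makes the Hessian nondegenerate and forces $d\Phi$ to be injective, hence an isomorphism by the dimension match. For properness I would classify the ways a sequence in $\mathcal{C}(P)$ can degenerate --- ideal vertices colliding on the sphere at infinity, or a prismatic cycle of faces collapsing --- and check that each pushes some $w^*(e^*)$ to $0$ or $\pi$ or drives some non-facial cycle sum down to the boundary value $2\pi$, so that the images leave every compact subset of $\mathcal{A}(P)$. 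Carrying out this degeneration bookkeeping, and matching each type of collapse bijectively with one of the defining (in)equalities of $\mathcal{A}(P)$, is the delicate part on which the whole continuity argument rests.
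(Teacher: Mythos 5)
The paper offers no proof of this statement: Theorem~\ref{theoremRivin} is imported verbatim from Rivin's paper \cite{Riv}, so your proposal can only be measured against the known proofs in the literature, whose architecture (necessity via horospherical links, sufficiency via a continuity/variational argument) you have correctly identified. Your treatment of conditions (1) and (2) is sound: the horospherical cross-section at an ideal vertex is indeed a Euclidean polygon whose angles are the dihedral angles, and the angle-sum identity gives $\sum_i w^*(e_i^*)=2\pi$ exactly over the faces of $P^*$. The dimension count is also correct (both sides have dimension $E-V=F-2$ after accounting for the coplanarity conditions imposed by non-triangular faces).

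However, two steps that carry the real weight of the theorem are asserted rather than proved. First, your necessity argument for (3) is not an argument: you only address the borderline case $\sum_i w^*(e_i^*)=2\pi$ and gesture at a ``Gauss--Bonnet / horospherical comparison,'' but nothing you write rules out a sum strictly below $2\pi$, and there is no horosphere available along a non-facial circuit (it does not link a single ideal vertex). The genuine proof needs a substantive geometric lemma --- in the Hodgson--Rivin polarity picture, the dual of such a circuit is a closed convex spacelike polygon in the de Sitter sphere, and one proves that every such polygon has perimeter strictly greater than $2\pi$, with equality characterizing the links of vertices. Second, your variational rigidity step is circular as stated: the volume functional is only defined on the image $\Phi(\mathcal{C}(P))$, so you cannot speak of its strict concavity ``on $\mathcal{A}(P)$'' before knowing $\Phi$ is surjective, which is the statement being proved. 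Rivin's actual route resolves this by working with angle structures on ideal \emph{triangulations}, where every admissible angle point determines tetrahedron shapes and the Lobachevsky volume functional is defined everywhere and strictly concave; the polyhedron is recovered at the critical point. Moreover, even granting concavity, strict concavity does not yield a nondegenerate Hessian pointwise, so the local-diffeomorphism claim still requires a separate infinitesimal rigidity argument; and the properness ``bookkeeping'' you defer is itself a serious degeneration analysis, not a routine check. As it stands, the proposal is a correct road map with the three hardest lemmas (strict inequality (3), local rigidity, properness) left unestablished.
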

	
Ideal hyperbolic polyhedra are also interesting from the point of views of Euclidean geometry, since they are exactly those polyhedra that can be inscribed into the ball. This correspondence and the Rivin's theorem made it possible to solve the problem of Jacob Steiner on describing a sphere around a polyhedron.
	
In the case of a right-angled polyhedron, the weights of edges and the weights of dual edges in the Rivin's theorem are $\pi/2$ and all faces of $P^{*}$ are quadrilateral. Accordingly, all vertices of the polyhedron $P \in \mathcal{IR}$ are 4-valent.
 	
It is easy to see that from the classical Euler formula for a polyhedron, $V-E + F = 2$, where $V$ is the number of vertices, $E$ is the number of edges, and $F$ is the number of faces of a polyhedron $P$, and from the fact that each vertex of an ideal right-angled polyhedron is incident to exactly four edges, it follows that $2E = 4V$.  Thus,  the relation $F = V + 2$ holds. Denote by $p_{k}$ the number of $k$--gonal faces ($k \geq 3$) of a polyhedron of class $\mathcal IR$. Then
\begin{equation} \label{formulamain} 
p_{3} = 8 + \sum_{k\geq 5} p_{k} (k-4).
\end{equation} 
Thus, each polyhedron from $\mathcal {IR}$ has at least $8$ triangular faces. It is easy to see that the octahedron shown in the Figure~\ref{fig1} has the minimum number of faces among the polyhedra in $\mathcal {IR}$.
\begin{figure}[ht]
\centering
\includegraphics[width=0.3\textwidth]{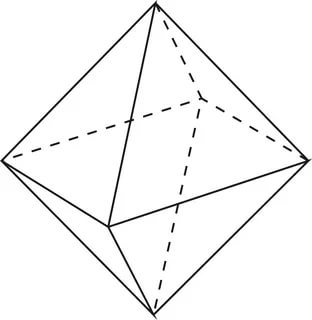}
\caption{The octahedron.} \label{fig1} 
\end{figure}
At the same time, as we will see below, the octahedron is also minimal in volume among all ideal right-angled polyhedra. 
	
\subsection{Four-dimensional case}
	
Recall~\cite[Table I]{Coxeter} that the only regular four-dimensional polyhedron for which each vertex has a type of the cone over a cube is a 24-cell. In particular, it is realized as an ideal right-angled polyhedron in $\mathbb H^{4}$.
	
Let $P \subset \mathbb H^{4}$ be an ideal right-angled polyhedron with the face vector  $f(P) = (f_{0}, f_{1}, f_{2}, f_{3})$. Since $P$ is a convex four-dimensional polyhedron, then its surface $\partial P$ is homeomorphic to $S^{3}$, that means that its Euler characteristic turns to zero. Thus, $f_{0} - f_{1} + f_{2} - f_{3} = 0$. The volume of the polyhedron can expressed in terms of the components of the face vector, namely, the following statement holds. 
	
\begin{lemma} \cite{Ko2012}
Let $P \subset \mathbb H^{3}$ be an ideal right-angled polyhedron with a face vector $f(P) = (f_{0}, f_{1}, f_{2}, f_{3})$. Then its volume is equal to
$$
\operatorname{vol} P = \frac{f_{0} - f_{3} +4}{3} \pi^{2}. 
$$
\end{lemma}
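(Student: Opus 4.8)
The plan is to realize $P$ as the fundamental domain of the right-angled Coxeter group $W$ generated by the reflections in its facets, and to compute $\operatorname{vol}(P)=\operatorname{vol}(\mathbb H^{4}/W)$ through the Gauss--Bonnet--Chern theorem for finite-volume hyperbolic $4$-orbifolds, which reads $\operatorname{vol}(\mathcal O)=\tfrac{4}{3}\pi^{2}\,\chi^{\mathrm{orb}}(\mathcal O)$. Since every dihedral angle of $P$ equals $\pi/2$, the polytope $P$ is a Coxeter polytope, so Vinberg's theory guarantees that $W$ acts properly discontinuously with $P$ as a fundamental domain; hence $\operatorname{vol}(\mathbb H^{4}/W)=\operatorname{vol}(P)$ and everything reduces to computing the orbifold Euler characteristic.

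To compute $\chi^{\mathrm{orb}}$ I would stratify $P$ by the relative interiors of its faces. A point in the relative interior of a face of codimension $c$ is fixed exactly by the $c$ mutually orthogonal reflections in the facets containing it, so its isotropy group is $(\mathbb Z/2)^{c}$ of order $2^{c}$, while a $k$-dimensional open face contributes $(-1)^{k}$ to the compactly supported Euler characteristic. This gives
\[
\chi^{\mathrm{orb}}(\mathbb H^{4}/W)=\sum_{F}\frac{(-1)^{\dim F}}{2^{\,4-\dim F}},
\]
the sum running over the faces $F$ of $P$ that actually meet $\mathbb H^{4}$. The essential point is that the ideal vertices do not enter this sum: they lie on the absolute, and the corresponding cusp cross-sections are closed flat $3$-orbifolds, whose Euler characteristic vanishes since they are odd-dimensional. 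Consequently the $f_{0}$ term disappears and
\[
\chi^{\mathrm{orb}}(\mathbb H^{4}/W)=-\frac{f_{1}}{8}+\frac{f_{2}}{4}-\frac{f_{3}}{2}+1 .
\]

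It then remains to rewrite this in terms of $f_{0}$ and $f_{3}$ alone, using two combinatorial inputs. First, the link of each ideal vertex is a bounded right-angled Euclidean $3$-polytope, hence a combinatorial cube; its $8$ vertices show that exactly $8$ edges emanate from every vertex, and counting edge-endpoints yields $2f_{1}=8f_{0}$, i.e. $f_{1}=4f_{0}$. Second, since $\partial P\cong S^{3}$ we have the Euler relation $f_{0}-f_{1}+f_{2}-f_{3}=0$, whence $f_{2}=3f_{0}+f_{3}$. Substituting both relations collapses the four-term expression to $\chi^{\mathrm{orb}}=\tfrac14(f_{0}-f_{3}+4)$, and Gauss--Bonnet gives
\[
\operatorname{vol}(P)=\frac{4}{3}\pi^{2}\cdot\frac{f_{0}-f_{3}+4}{4}=\frac{f_{0}-f_{3}+4}{3}\,\pi^{2},
\]
as claimed.

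The main obstacle, and the step deserving the most care, is justifying that the ideal vertices contribute nothing and that Gauss--Bonnet applies in the cusped setting: one must verify that each end is a cylinder $N^{3}\times\mathbb R$ over a closed flat $3$-orbifold $N^{3}$, so that the curvature integral over the cusp vanishes and the $\chi^{\mathrm{orb}}$ assembled from the open strata coincides with the quantity appearing in the Gauss--Bonnet--Chern formula. Once this is settled, the arithmetic with $f_{1}=4f_{0}$ and the Euler relation is routine. As a check, the ideal right-angled $24$-cell has $f_{0}=f_{3}=24$, giving $\operatorname{vol}=\tfrac{4}{3}\pi^{2}$, the known value.
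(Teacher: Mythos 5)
The paper itself gives no proof of this lemma---it is imported verbatim from \cite{Ko2012} (and note the misprint in the statement: $P$ must lie in $\mathbb H^{4}$, not $\mathbb H^{3}$, which your proof correctly assumes). Your argument is correct and is essentially Kolpakov's own: the Gauss--Bonnet--Chern identity $\operatorname{vol}=\tfrac{4}{3}\pi^{2}\chi^{\mathrm{orb}}$ for the finite-volume orbifold $\mathbb H^{4}/W$, the stratum-by-stratum computation of $\chi^{\mathrm{orb}}$ with stabilizers $(\mathbb Z/2)^{c}$ and vanishing contribution from the cusps (flat closed odd-dimensional cross-sections), plus the two combinatorial inputs $f_{1}=4f_{0}$ (cube vertex links, forced by right angles) and $f_{0}-f_{1}+f_{2}-f_{3}=0$, which indeed collapse the sum to $\chi^{\mathrm{orb}}=\tfrac{1}{4}\left(f_{0}-f_{3}+4\right)$.
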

	
Since for a 24-cell, the face vector has the form $(24,96,96,24)$, its volume is $4 \pi^{2} /3$. It is shown in~\cite{Ko2012} that the 24-cell is the unique smallest volume ideal right-angled polyhedron in $\mathbb H^{4}$.
	
\section{Enumeration of polyhedra and their volumes} \label{sec2} 
	
\subsection{Antiprisms and the edge-twist operation} 

Considering the octa\-hed\-ron as a triangular antiprism, that is, a polyhedron with triangular top and bottom and with a lateral surface formed by two levels of triangles, it can be naturally generalized to the next infinite family of polyhedra. By an $n$-antiprism $A(n)$, $n \geq 3$, we mean $(2n+2)$-hedron with $n$-gonal top and bottom and with a lateral surface formed by two levels of $n$ triangles in each; and with each vertex incident to four edges. Schlegel diagrams of polyhedra $A(3)$ and $A(4)$ are presented in Figure~\ref{fig200}. 

By checking the conditions of Andreev's theorem or Rivin's theorem, it is easy to see that the antiprisms $A (n)$, $n \geq 3$, can be realized as ideal right-angled polyhedra in $\mathbb H^{3}$.
	
A.~Kolpakov demonstrated in~\cite{Ko2012} that the polyhedra $A(n)$ are minimal in the following sense
	
\begin{theorem} \cite{Ko2012}  \label{theorem2.1}
For $n \geq 3$ the antiprism $A(n)$ has the smallest number of faces, equal to $2n + 2$, among all the ideal right-angled hyperbolic polyhedra with at least one $n$-gonal face.
\end{theorem}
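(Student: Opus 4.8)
The plan is to argue entirely in the dual graph $P^{*}$ supplied by Rivin's theorem, where the combinatorics of an ideal right-angled polyhedron becomes especially rigid. Since every vertex of $P\in\mathcal{IR}$ is $4$-valent, $P^{*}$ is a quadrangulation of the sphere: a simple $3$-connected planar graph (being the dual of the $3$-connected graph of a polyhedron) all of whose faces are quadrilaterals. Because all face boundaries have even length, $P^{*}$ is bipartite; write $X\sqcup Y$ for its vertex classes. Moreover, setting every dual weight equal to $\pi/2$ in conditions (2)--(3) of Theorem~\ref{theoremRivin}, a cycle of length $k$ carries total weight $k\pi/2$, so a $4$-cycle has weight exactly $2\pi$; hence no $4$-cycle can be a non-facial cycle, i.e.\ every $4$-cycle bounds a face. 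The $n$-gonal face of $P$ corresponds to a vertex $v_{0}\in P^{*}$ of degree $n$, say $v_{0}\in X$, with $n$ distinct neighbours $u_{1},\dots,u_{n}\in Y$. Since $F$ equals the number $|X|+|Y|$ of vertices of $P^{*}$, it suffices to prove $|X|\ge n+1$ and $|Y|\ge n+1$.

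The technical heart of the argument is the following local bound: \emph{every vertex $x\in X\setminus\{v_{0}\}$ is adjacent to at most two of the $u_{i}$.} I would prove this by combining the two rigidity features above. First, if $x$ is adjacent to $u_{i}$ and $u_{j}$, then $v_{0}u_{i}xu_{j}$ is a $4$-cycle and therefore bounds a face; in particular $v_{0}$ and $x$ lie on a common face of $P^{*}$. If $x$ were adjacent to a third neighbour $u_{k}$, the $4$-cycle $v_{0}u_{i}xu_{k}$ would bound a second, distinct face also containing $v_{0}$ and $x$. But in a $3$-connected planar graph two distinct faces meet in at most a single vertex or a single edge, and here $v_{0}x$ is not an edge (the two vertices lie in the same class $X$); two common faces would make $\{v_{0},x\}$ a $2$-separator, contradicting $3$-connectivity. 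This step is where I expect the main obstacle to lie, since it is exactly the place where the right-angled hypothesis (through the $4$-cycle condition) is indispensable: without it a ``dual diagonal'' could collapse the second neighbourhood and let the polyhedron close up prematurely.

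With the local bound in hand the counting is short. Each $u_{i}\in Y$ has degree at least $3$ (the minimum degree of a $3$-connected graph), one of these edges going to $v_{0}$ and all others to vertices of $X\setminus\{v_{0}\}$; hence there are at least $2n$ edges between $\{u_{1},\dots,u_{n}\}$ and $X\setminus\{v_{0}\}$. As each vertex of $X\setminus\{v_{0}\}$ receives at most two such edges, we get $|X\setminus\{v_{0}\}|\ge n$, i.e.\ $|X|\ge n+1$. To obtain the matching bound on $Y$, pick any $x\in X\setminus\{v_{0}\}$ (which exists); it has degree at least $3$ with all neighbours in $Y$, but at most two of them lie in $\{u_{1},\dots,u_{n}\}$, so $x$ has a neighbour in $Y\setminus\{u_{1},\dots,u_{n}\}$. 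Thus $|Y|\ge n+1$, and therefore $F=|X|+|Y|\ge 2n+2$. Finally, $A(n)\in\mathcal{IR}$ has an $n$-gonal face and exactly $2n+2$ faces, so the bound is attained, which completes the proof.
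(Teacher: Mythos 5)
Your proof is correct, and the comparison here is necessarily with Kolpakov's original argument, since the paper states Theorem~\ref{theorem2.1} as a quotation from \cite{Ko2012} and gives no proof of its own. All the key steps check out: $P^{*}$ is a simple $3$-connected bipartite quadrangulation of the sphere; Rivin's condition (3) with all dual weights equal to $\pi/2$ does force every $4$-cycle of $P^{*}$ to be facial (this is precisely the defining property of the class $\mathscr{Q}_{4}$ that the paper invokes before Theorem~\ref{theorem2.2}); and your local lemma --- no $x\in X\setminus\{v_{0}\}$ has three common neighbours with $v_{0}$ --- is sound, since the two distinct quadrilateral faces bounded by $v_{0}u_{i}xu_{j}$ and $v_{0}u_{i}xu_{k}$ would share the non-adjacent pair $\{v_{0},x\}$ (in fact the whole path $v_{0}u_{i}x$), contradicting the standard fact you cite about face intersections in $3$-connected plane graphs; alternatively the contradiction can be made even more local, since both faces would have to occupy the two corners between the edges $u_{i}v_{0}$ and $u_{i}x$ at $u_{i}$, forcing $\deg u_{i}=2<3$. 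The edge count ($\geq 2n$ edges from the $u_{i}$ into $X\setminus\{v_{0}\}$, at most two per vertex, plus one extra vertex of $Y$ found via any $x\in X\setminus\{v_{0}\}$) and the sharpness via $A(n)$ are fine. The genuine difference from \cite{Ko2012} is one of setting: Kolpakov argues in the primal polyhedron, counting the $n$ faces edge-adjacent to the $n$-gon and the $n$ faces opposite to it at its $4$-valent ideal vertices, with the required distinctness of these faces enforced by Andreev-type prohibitions on prismatic circuits; your dual version counts exactly the same objects (the vertices of $X\setminus\{v_{0}\}$ with two edges into $\{u_{1},\dots,u_{n}\}$ are the vertex-opposite faces), but packages all the separate distinctness arguments into the single statement that every $4$-cycle is facial. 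What the dual route buys is uniformity and a clean fit with the paper's own $\mathscr{Q}_{4}$/edge-twist framework of Theorems~\ref{theorem2.2} and~\ref{theorem2.3}; what it gives up is the direct geometric picture around the $n$-gonal face, and note that neither your bound nor the theorem as stated addresses the (true, but here unclaimed) fact that $A(n)$ is the unique minimizer.
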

	
As will be clear below, antiprisms play an important role in understanding the structure of the set $\mathcal{IR}$ of all ideal right-angled hyperbolic polyhedra.
	
Theorem~\ref{theoremRivin} admits to characterize the class $\mathcal{IR}$ in terms of the dual graphs of the polyhedral graphs. The dual graph must be a quadrangulation of the sphere, that is, a finite graphs with quadrilateral faces on a 2-sphere. Furthermore, the dual graph cannot contain a cycle of length four that separates two faces. The class of graphs with these properties was considered in~\cite{Bri}, where it was denoted by $\mathscr Q_{4}$. Using the results of~\cite{Bri} on graphs from the class $\mathscr Q_{4}$ and passing from dual graphs to the original graphs, one can state the following result.
	
\begin{theorem} \cite{Bri} \label{theorem2.2} 
The class of $4$-valent $3$-connected and cyclically $6$-connec\-ted planar graphs is generated by 1-skeletons of antiprisms $A(n)$ and by edge-twist moves.
\end{theorem}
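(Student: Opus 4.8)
The plan is to dualize the statement and prove the equivalent generation result for quadrangulations, which is the natural setting of~\cite{Bri}, and then translate back. By Theorem~\ref{theoremRivin} and the discussion preceding the statement, a $4$-valent planar graph $P$ lies in the class under consideration if and only if its dual $P^{*}$ is a \emph{simple} quadrangulation of the sphere --- every face is a $4$-gon precisely because every vertex of $P$ has degree $4$ --- that contains \emph{no separating $4$-cycle}. Here $3$-connectivity is self-dual under planar duality, and the cyclic $6$-connectivity of $P$ translates, via the $4$-valence and the resulting parity of edge-cuts, exactly into the absence of a separating $4$-cycle in $P^{*}$; this is the class $\mathscr{Q}_{4}$. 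Under duality the $1$-skeleton of the antiprism $A(n)$ becomes the \emph{pseudo-double wheel} (two hub vertices of degree $n$ joined to a rim of $2n$ degree-$3$ vertices so that all faces are quadrilaterals), and the edge-twist move becomes a local expansion move on quadrangulations. Hence it suffices to show that every quadrangulation in $\mathscr{Q}_{4}$ is obtained from a pseudo-double wheel by a sequence of such expansions, which I would establish by induction on the number of faces.

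For the inductive step I would exhibit a \emph{reducible configuration}. Euler's formula for a quadrangulation gives $F = V-2$ and average degree $4 - 8/V < 4$, so a vertex of degree at most $3$ always exists; a degree-$2$ vertex would force either a multiple edge or a separating $4$-cycle, both excluded in $\mathscr{Q}_{4}$, so away from the pseudo-double wheels one finds a vertex $v$ of degree exactly $3$. The three quadrilateral faces surrounding such a $v$ determine the inverse edge-twist, namely a contraction of this neighbourhood producing a smaller quadrangulation $Q'$. Two things must then be verified: that $Q'$ is again a simple quadrangulation, and that the contraction creates no separating $4$-cycle, so that $Q' \in \mathscr{Q}_{4}$. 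Both are local checks in a bounded neighbourhood of $v$, and together they single out precisely which degree-$3$ vertices are genuinely reducible.

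The combinatorial heart of the argument --- and the step I expect to be the main obstacle --- is showing that \emph{irreducibility forces the base case}: if no degree-$3$ vertex admits the reduction above, then $Q$ must itself be a pseudo-double wheel. This is where one cannot avoid a careful analysis of the quadrilaterals surrounding the unavoidable low-degree vertices, typically via a discharging argument on the vertex degrees or an exhaustive examination of the configurations formed by two adjacent degree-$3$ vertices, in order to conclude that the only obstruction to reducing arises when the whole graph already has the rotationally symmetric structure of a pseudo-double wheel. Once the irreducible members of $\mathscr{Q}_{4}$ are identified with the pseudo-double wheels, the induction closes. Translating the conclusion back through duality --- antiprisms $A(n)$ for pseudo-double wheels, edge-twist moves for the expansions --- then yields the asserted generation of the class of $4$-valent, $3$-connected, cyclically $6$-connected planar graphs.
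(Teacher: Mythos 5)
The first thing to note is that the paper contains no proof of this statement: Theorem~\ref{theorem2.2} is imported verbatim from~\cite{Bri}, and the only argument the authors supply is the one-sentence remark preceding it, that one passes by planar duality from the class $\mathscr Q_{4}$ of quadrangulations studied in~\cite{Bri} to the $4$-valent graphs. Your first paragraph is exactly that remark made precise, and it is correct: $4$-valence of $P$ corresponds to all faces of $P^{*}$ being quadrilaterals, $3$-connectivity is self-dual for planar polyhedral graphs, and since every edge cut of a $4$-regular graph has even size, cyclic $6$-connectivity of $P$ is equivalent to the absence of non-facial (separating) $4$-cycles in $P^{*}$ --- the facial $4$-cycles of $P^{*}$ being precisely the duals of the trivial $4$-cuts around single vertices of $P$. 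Likewise the dual of $A(n)$ is the pseudo-double wheel and the dual of the edge-twist is the expansion used in~\cite{Bri}. So your framework coincides with the cited source's.

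The genuine gap is the inductive heart, which you explicitly defer rather than prove. Your Euler-formula count shows only that a vertex of degree $3$ exists, and this distinguishes nothing: pseudo-double wheels themselves have $2n$ rim vertices of degree $3$ yet are the irreducible base cases, so the existence of a degree-$3$ vertex cannot by itself drive the induction. Everything rests on the lemma that every member of $\mathscr Q_{4}$ \emph{other than} a pseudo-double wheel contains a degree-$3$ vertex at which the contraction is legal --- i.e., the result is again a simple quadrangulation, still $3$-connected, with no separating $4$-cycle created --- and your sentence proposing ``a discharging argument \dots or an exhaustive examination'' is a statement of intent, not an argument; in~\cite{Bri} this case analysis of the neighbourhoods of degree-$3$ vertices and of the $4$-cycles a contraction can create constitutes the bulk of the proof. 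As written, your proposal establishes only the conditional statement that the induction descends whenever a legal reduction exists; it does not rule out irreducible members of $\mathscr Q_{4}$ beyond the pseudo-double wheels. A complete write-up should also verify the easy converse direction, used implicitly in the word ``generated'': that an edge-twist applied to a graph of the class (equivalently, the dual expansion applied within $\mathscr Q_{4}$) again yields a graph of the class.
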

	
Recall that a graph is said to be \emph{cyclically $k$-connected} if $k$ is the smallest number of edges such that removing them decomposes the graph into two components each of which contains a cycle.
	
The \emph{edge-twist} move is defined as follows. Let $P \in \mathcal{IR}$ and assume that some face of the polyhedron has four distinct ideal vertices that are pairwise connected by edges  $e_{1}$ and $e_{2}$ as in Figure~\ref{figtwist}.  
\begin{figure}[h] 
\begin{center}
\setlength{\unitlength}{0.34mm}
\begin{picture}(0,50)(0,-10)
\thicklines 
\put(-50,0){\begin{picture}(0,40)
\qbezier(-18,0)(-18,0)(18,0)
\qbezier(-18,30)(-18,30)(18,30)
\put(-20,0){\circle*{4}}
\put(20,0){\circle*{4}}
\put(-20,30){\circle*{4}}
\put(20,30){\circle*{4}}
\put(0,35){\makebox(0,0)[cc]{$e_{1}$}}
\put(0,-5){\makebox(0,0)[cc]{$e_{2}$}}
\put(0,-20){\makebox(0,0)[cc]{$P$}}
\end{picture} }
\put(50,0){\begin{picture}(0,40) 
\thicklines
\qbezier(-18,2)(-18,2)(-2,13)
\qbezier(-18,28)(-18,28)(-2,17)
\qbezier(18,2)(18,2)(2,13)
\qbezier(18,28)(18,28)(2,17)
\put(-20,0){\circle*{4}}
\put(20,0){\circle*{4}}
\put(-20,30){\circle*{4}}
\put(20,30){\circle*{4}}
\put(0,15){\circle*{4}}
\put(0,8){\makebox(0,0)[cc]{$v$}}
\put(0,-20){\makebox(0,0)[cc]{$P^{*}$}}
\end{picture}}
\end{picture} 
\end{center}
\caption{An edge-twist move.} \label{figtwist}
\end{figure}
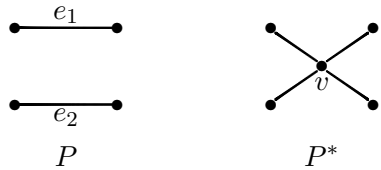

Then the transformation involves removing $e_{1}$ and $e_{2}$, creating a new vertex $v$ and connecting it by edges with the above four vertices. We denote the resulting polyhedron by $P^{*}$ and say that $P^{*}$ is obtained from $P$ by an edge-twist move. For example, $A(4)^{*}$ in Figure~\ref{fig200} is obtained from  $A(4)$  by the edge-twist move. The edges $e_{1}$, $e_{2}$ and the new vertex $v$ are indicated in the figure. 
\begin{figure}[h]
	\begin{center} 
		\unitlength=0.4mm
		\begin{picture}(0,80) 
		\put(-100,0){\begin{picture}(0,80)(0,-5)
			\thicklines 
			\qbezier(-20,20)(-20,20)(20,20)
			\qbezier(-20,20)(-20,20)(0,50) 
			\qbezier(20,20)(20,20)(0,50) 
			\qbezier(0,-5)(0,-5)(-20,20)
			\qbezier(0,-5)(0,-5)(20,20) 
			\qbezier(-30,50)(-30,50)(-20,20)
			\qbezier(-30,50)(-30,50)(0,50) 
			\qbezier(30,50)(30,50)(20,20)
			\qbezier(30,50)(30,50)(0,50)
			\qbezier(-30,50)(-50,0)(0,-5)
			\qbezier(30,50)(50,0)(0,-5)
			\qbezier(-30,50)(0,90)(30,50)
			\end{picture}
		}
		\put(0,0){\begin{picture}(0,80)(0,0)
			\thicklines 
			\qbezier(-20,20)(-20,20)(20,20)
			\qbezier(-20,20)(-20,20)(-20,60) 
			\qbezier(-20,60)(-20,60)(20,60)
			\qbezier(20,20)(20,20)(20,60) 
			\qbezier(0,0)(0,0)(-20,20)
			\qbezier(0,0)(0,0)(20,20) 
			\qbezier(-40,40)(-40,40)(-20,20)
			\qbezier(-40,40)(-40,40)(-20,60) 
			\qbezier(40,40)(40,40)(20,20)
			\qbezier(40,40)(40,40)(20,60)
			\qbezier(0,80)(0,80)(-20,60)
			\qbezier(0,80)(0,80)(20,60)
			\qbezier(-40,40)(-40,0)(0,0)
			\qbezier(40,40)(40,0)(0,0)
			\qbezier(-40,40)(-40,80)(0,80)
			\qbezier(40,40)(40,80)(0,80)
			\put(-15,40){\makebox(0,0)[cc]{$e_{1}$}}
			\put(15,40){\makebox(0,0)[cc]{$e_{2}$}}
			\end{picture}
		}
		\put(100,0){\begin{picture}(0,80)(0,0)
			\thicklines 
			\qbezier(-20,20)(-20,20)(20,20)
			%\qbezier(-20,20)(-20,20)(-20,60) 
			\qbezier(-20,60)(-20,60)(20,60)
			%\qbezier(20,20)(20,20)(20,60) 
			\qbezier(0,40)(0,40)(-20,20) 
			\qbezier(0,40)(0,40)(20,20) 
			\qbezier(0,40)(0,40)(-20,60) 
			\qbezier(0,40)(0,40)(20,60) 
			\qbezier(0,0)(0,0)(-20,20)
			\qbezier(0,0)(0,0)(20,20) 
			\qbezier(-40,40)(-40,40)(-20,20)
			\qbezier(-40,40)(-40,40)(-20,60) 
			\qbezier(40,40)(40,40)(20,20)
			\qbezier(40,40)(40,40)(20,60)
			\qbezier(0,80)(0,80)(-20,60)
			\qbezier(0,80)(0,80)(20,60)
			\qbezier(-40,40)(-40,0)(0,0)
			\qbezier(40,40)(40,0)(0,0)
			\qbezier(-40,40)(-40,80)(0,80)
			\qbezier(40,40)(40,80)(0,80)
			\put(-8,40){\makebox(0,0)[cc]{$v$}}
			\end{picture}
		}
	\end{picture}
\end{center} \caption{Polyhedra $A(3)$, $A(4)$ and $A(4)^{*}$.} \label{fig200}
\end{figure}
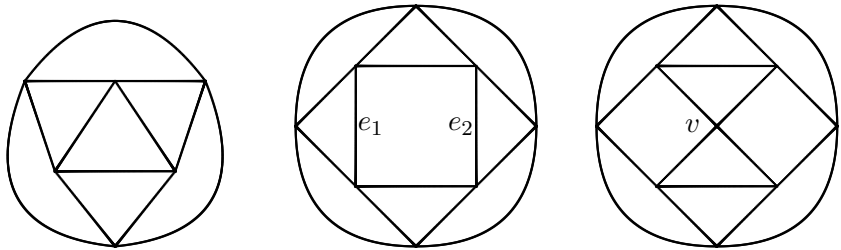

Theorems~\ref{theoremRivin} and~\ref{theorem2.2} lead to the following result.

\begin{theorem} \label{theorem2.3} 
Each ideal right-angled hyperbolic polyhedron is an antiprism or can be obtained from some antiprism by a finite number of edge-twist moves.
\end{theorem}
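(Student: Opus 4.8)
The plan is to deduce this geometric statement from the purely combinatorial classification in Theorem~\ref{theorem2.2}, using Theorem~\ref{theoremRivin} twice: first to locate the $1$-skeleton of $P$ in the correct class of graphs, and afterwards to guarantee that the generating sequence produced by Theorem~\ref{theorem2.2} can be realized inside $\mathcal{IR}$. Let $P \in \mathcal{IR}$ and let $G$ be its $1$-skeleton. Three structural properties of $G$ are immediate: planarity and $3$-connectedness hold because $P$ is a convex polyhedron (Steinitz's theorem), while $4$-valence was recorded above, since every vertex of an ideal right-angled polyhedron meets exactly four edges.

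The heart of the argument is to show that $G$ is cyclically $6$-connected, and this is where Theorem~\ref{theoremRivin} is applied. Giving every edge of $G$ the weight $\pi/2$, all dual weights $w^{*}(e^{*}) = \pi - w(e)$ equal $\pi/2$ as well, so condition~(2) forces every face of the dual graph $G^{*}$ to be a quadrilateral; thus $G^{*}$ is a quadrangulation of the sphere. Condition~(3) then rules out any $4$-cycle of $G^{*}$ that does not bound a face, for such a cycle would have dual-weight sum $4 \cdot \pi/2 = 2\pi$, violating the strict inequality $> 2\pi$. Hence $G^{*}$ has no separating $4$-cycle, i.e. $G^{*} \in \mathscr{Q}_{4}$. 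Translating back through planar duality, a simple $4$-cycle of $G^{*}$ is dual to a minimal edge cut of size four in $G$ splitting it into exactly two connected pieces; a face-bounding cycle corresponds to the star of a single vertex, whereas a separating cycle corresponds to a cut in which each side $S$ has at least two vertices. Using $4$-regularity, each such side then spans $2(|S|-1) \geq |S|$ edges and therefore contains a cycle, so a separating $4$-cycle would be exactly a cyclic edge cut of size four. The absence of these, together with the observation that every edge cut in a $4$-regular graph has even size, forces the smallest cyclic edge cut to have size at least six; this is cyclic $6$-connectivity.

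Having identified $G$ as a $4$-valent, $3$-connected, cyclically $6$-connected planar graph, I would invoke Theorem~\ref{theorem2.2}: $G$ is the $1$-skeleton of an antiprism $A(n)$, or it arises from such a $1$-skeleton by a finite sequence of edge-twist moves. To upgrade this to the claimed statement about polyhedra, note that Theorem~\ref{theorem2.2} asserts the entire class is generated by antiprisms under edge-twists, so every intermediate graph in the generating sequence again lies in this class and its dual again belongs to $\mathscr{Q}_{4}$, whence Theorem~\ref{theoremRivin} realizes it as a member of $\mathcal{IR}$. Because the realization of a given combinatorial type as an ideal right-angled polyhedron is unique up to isometry (by Theorem~\ref{theoremRivin}, or Theorem~\ref{theoremAndreev}), the combinatorial edge-twist sequence lifts to the corresponding sequence of edge-twist moves on ideal right-angled polyhedra terminating at $P$, which is exactly what is asserted.

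I expect the duality bookkeeping in the second paragraph to require the most care: correctly matching a face-bounding $4$-cycle of $G^{*}$ with the star of a vertex of $G$ and a separating $4$-cycle with a genuinely two-sided cut, and then using $4$-regularity both to certify that each side carries a cycle and to exclude odd cuts, so that the passage from ``no forbidden $4$-cut'' to the precise notion of cyclic $6$-connectivity defined in the excerpt is airtight. The only other point needing attention is logical: one must use ``generated by'' in Theorem~\ref{theorem2.2} in the direction guaranteeing that each intermediate step remains realizable, so that the final conclusion genuinely concerns polyhedra in $\mathcal{IR}$ rather than abstract graphs.
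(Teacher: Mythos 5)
Your proposal is correct and follows essentially the same route as the paper: the authors obtain Theorem~\ref{theorem2.3} precisely by combining Rivin's theorem (all dual weights $\pi/2$, so the dual graph is a quadrangulation of the sphere with no separating $4$-cycle, i.e.\ lies in $\mathscr{Q}_{4}$) with the generation result of Theorem~\ref{theorem2.2}, stating only that these two theorems ``lead to'' the result. Your careful duality bookkeeping (matching facial $4$-cycles of $G^{*}$ with vertex stars, separating $4$-cycles with cyclic $4$-cuts, and using $4$-regularity with the parity of cuts to reach cyclic $6$-connectivity) and your remark on realizing the intermediate graphs in $\mathcal{IR}$ simply make explicit the steps the paper leaves implicit.
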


Let us  introduce a class of polyhedra obtained from antiprisms. Let $A (n)$, $n \geq 4$, be an antiprism, and $e_{1}$ and $e_{2}$ be two edges that belong to  one of $n$-gonal faces, such that there is a third edge on the same face to which they are both adjacent, let us denote it by  $e_{3}$. In other words, $e_{1}$ and $e_{2}$ \emph{are adjacent through an edge}. We apply the edge-twist move to the edges $e_{1}$ and $e_{2}$.  As one can see from Figure~\ref{fig300}, illustrating the case of the antiprism $A (6)$, when we apply edge-twist move  to the edges adjacent through an edge, the combinatorial structure changes as follows. The antiprism $A (n)$ had $2n+2$ faces: two $n$-gonal faces and $2n$ triangular faces. The new polyhedron $A(n)^{*}$ has $2n+3$ faces: one $n$-gonal face, one $(n-1)$-gonal face, two quadrilateral faces and $(2n-1)$ triangular faces. The polyhedron $A (n)^{*}$ will be referred to as a  \emph{twisted antiprism}.
Observe that the number of faces of the antiprism is always even, but  the number of faces of the twisted antiprism is always odd. As well as the antiprism, by virtue of Andreev's or Rivin's theorems, any twisted antiprism can be realized as an ideal right-angled polyhedron in~$\mathbb H^{3}$.
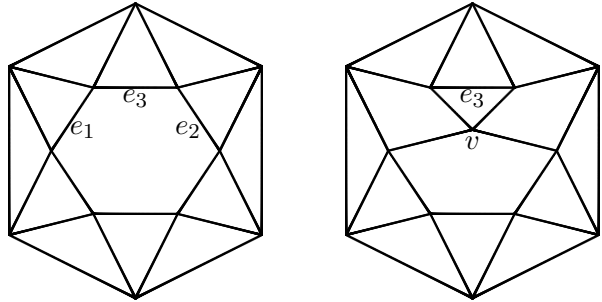
\begin{figure}[h]
\begin{center} 
	\unitlength=0.28mm
	\begin{picture}(0,140) 
	\put(-80,0){\begin{picture}(0,140)(0,0)
		\thicklines 
		\qbezier(0,0)(0,0)(-60,30)
		\qbezier(0,0)(0,0)(-20,40) 
		\qbezier(0,0)(0,0)(60,30)
		\qbezier(0,0)(0,0)(20,40) 
		\qbezier(-60,30)(-60,30)(-60,110)
		\qbezier(-60,30)(-60,30)(-20,40) 
		\qbezier(-60,30)(-60,30)(-40,70)
		\qbezier(60,30)(60,30)(60,110)
		\qbezier(60,30)(60,30)(20,40) 
		\qbezier(60,30)(60,30)(40,70)
		\qbezier(-60,110)(-60,110)(0,140) 
		\qbezier(-60,110)(-60,110)(-20,100)
		\qbezier(-60,110)(-60,110)(-40,70) 
		\qbezier(60,110)(60,110)(0,140) 
		\qbezier(60,110)(60,110)(20,100)
		\qbezier(60,110)(60,110)(40,70) 
		\qbezier(-20,40)(-20,40)(20,40) 
		\qbezier(-20,100)(-20,100)(20,100)
		\qbezier(-20,40)(-20,40)(-40,70)
		\qbezier(-60,110)(-60,110)(-40,70) 
		\qbezier(-40,70)(-40,70)(-20,100) 
		\qbezier(-20,100)(-20,100)(0,140) 
		\qbezier(20,40)(20,40)(40,70)
		\qbezier(60,110)(60,110)(40,70) 
		\qbezier(40,70)(40,70)(20,100)
		\qbezier(20,100)(20,100)(0,140)
		\put(-25,80){\makebox(0,0)[cc]{$e_{1}$}}
		\put(25,80){\makebox(0,0)[cc]{$e_{2}$}}
		\put(0,95){\makebox(0,0)[cc]{$e_{3}$}}
		\end{picture}
	}
	\put(80,0){\begin{picture}(0,140)(0,0)
		\thicklines 
		\qbezier(0,0)(0,0)(-60,30)
		\qbezier(0,0)(0,0)(-20,40) 
		\qbezier(0,0)(0,0)(60,30)
		\qbezier(0,0)(0,0)(20,40) 
		\qbezier(-60,30)(-60,30)(-60,110)
		\qbezier(-60,30)(-60,30)(-20,40) 
		\qbezier(-60,30)(-60,30)(-40,70)
		\qbezier(60,30)(60,30)(60,110)
		\qbezier(60,30)(60,30)(20,40) 
		\qbezier(60,30)(60,30)(40,70)
		\qbezier(-60,110)(-60,110)(0,140) 
		\qbezier(-60,110)(-60,110)(-20,100)
		\qbezier(-60,110)(-60,110)(-40,70) 
		\qbezier(60,110)(60,110)(0,140) 
		\qbezier(60,110)(60,110)(20,100)
		\qbezier(60,110)(60,110)(40,70) 
		\qbezier(-20,40)(-20,40)(20,40) 
		\qbezier(-20,100)(-20,100)(20,100)
		\qbezier(-20,40)(-20,40)(-40,70)
		\qbezier(-60,110)(-60,110)(-40,70) 
		%\qbezier(-40,70)(-40,70)(-20,100) 
		\qbezier(-20,100)(-20,100)(0,140) 
		\qbezier(20,40)(20,40)(40,70)
		\qbezier(60,110)(60,110)(40,70) 
		%\qbezier(40,70)(40,70)(20,100)
		\qbezier(20,100)(20,100)(0,140)
		\qbezier(0,80)(0,80)(-40,70)
		\qbezier(0,80)(0,80)(-20,100)
		\qbezier(0,80)(0,80)(40,70)
		\qbezier(0,80)(0,80)(20,100)
		\put(0,74){\makebox(0,0)[cc]{$v$}}
		\put(0,95){\makebox(0,0)[cc]{$e_{3}$}}
		\end{picture}
	}
\end{picture}
\end{center} \caption{Polyhedra $A(6)$ and $A(6)^{*}$.} \label{fig300}
\end{figure}

Denote by $v_ {8}$ the volume of an ideal right-angled hyperbolic octahedron. The numerical value of this quantity will be given in the next section.

\begin{lemma} \label{lemma2.1} 
Let $A(n)$, $n \geq 4$, be an ideal right-angled antiprism in $\mathbb H^{3}$ having $2n+2$ faces and $A(n)^{*}$ be an ideal right-angled twisted antiprism obtained from $A(n)$ and having $2n + 3$ faces. Then for the volume of the twisted antiprism the following equality holds:
$$
\operatorname{vol} (A(n)^{*}) = \operatorname{vol} (A(n-1)) + v_{8}. 
$$ 
\end{lemma}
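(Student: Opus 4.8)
The plan is to realize $A(n)^{*}$ as the union of a copy of $A(n-1)$ and an ideal right-angled octahedron, glued along a single ideal triangular face, and then to read off the volumes additively. I would fix the standard ideal right-angled antiprism $A(n-1)$ with its top and bottom $(n-1)$-gons and its $2(n-1)$ lateral triangles, single out one \emph{downward} lateral triangle $T$ adjacent to the top $(n-1)$-gon, and glue an ideal right-angled octahedron $O$ (that is, $A(3)$, whose volume is $v_{8}$) to $A(n-1)$ by isometrically identifying one triangular face of $O$ with $T$. This identification is canonical: every face of a polyhedron in $\mathcal{IR}$ is an ideal polygon lying in a totally geodesic plane, and all ideal triangles of $\mathbb H^{2}$ are isometric, so the two triangular faces can be matched by an isometry of $\mathbb H^{3}$. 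I would denote the resulting glued space by $W$.

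Next I would check that $W$ is again a convex ideal right-angled polyhedron. All its vertices lie on the absolute, since the vertices of both pieces do and the glued triangle is ideal. Along each of the three edges of $T$ the dihedral angle of $W$ is the sum of the two right dihedral angles coming from $A(n-1)$ and from $O$, hence equals $\pi$; therefore the two faces meeting along such an edge are coplanar and amalgamate into a single face, so these three edges disappear and $W$ develops no reflex behaviour there, while at every remaining edge the dihedral angle is still $\pi/2$. A short combinatorial check then shows that $W$ has exactly $2n+3$ faces, namely one $n$-gon (the amalgamation of the top $(n-1)$-gon of $A(n-1)$ with one face of $O$), one $(n-1)$-gon (the untouched bottom face of $A(n-1)$), two quadrilaterals (each the amalgamation of a lateral triangle of $A(n-1)$ with a face of $O$) and $2n-1$ triangles, together with $2n+1$ vertices; this is precisely the combinatorial type of the twisted antiprism $A(n)^{*}$.

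Finally I would invoke the rigidity of ideal right-angled polyhedra: by Andreev's theorem (Theorem~\ref{theoremAndreev}), equivalently by Rivin's theorem (Theorem~\ref{theoremRivin}), a polyhedron in $\mathcal{IR}$ is determined up to isometry by its combinatorial type, so $W$ is isometric to $A(n)^{*}$. Volume is additive under the gluing, because the two pieces meet only along the two-dimensional face $T$, which has measure zero; this yields $\operatorname{vol}(A(n)^{*}) = \operatorname{vol}(W) = \operatorname{vol}(A(n-1)) + \operatorname{vol}(O) = \operatorname{vol}(A(n-1)) + v_{8}$.

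The main obstacle I expect is the second step. One must make sure that the amalgamated object really is an embedded convex polyhedron: that the three pairs of faces meeting along the edges of $T$ become genuinely coplanar (which follows from the angle sum being $\pi$), and, more delicately, that the combinatorial type produced by the gluing coincides with that of $A(n)^{*}$ rather than merely sharing its face vector. Matching the face adjacencies carefully, and thereby pinning down the gluing data, namely which lateral triangle of $A(n-1)$ is used and which isometric identification is chosen, is where the real work lies; once this is done, rigidity turns the combinatorial match into the desired isometry and the volume identity is immediate.
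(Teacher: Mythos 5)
Your proposal is correct and takes essentially the same route as the paper's own proof: the paper likewise glues an ideal right-angled octahedron onto a triangular face of $A(n-1)$, notes that the dihedral angles along the three edges of the glued triangle become $\pi$ so the adjacent faces amalgamate into single coplanar faces, and identifies the resulting right-angled polyhedron combinatorially with $A(n)^{*}$. Your explicit appeal to Andreev--Rivin rigidity to pass from the combinatorial identification to an isometry (and hence to equality of volumes) merely spells out a step the paper leaves implicit.
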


\begin{proof} 
Since the edge-twist move of edges adjacent through one edge is a local transformation of a polyhedron, we will illustrate the proof for the case $n=6$. For an arbitrary $n \geq 4$, the proof is analogous.  

Let us consider the antiprism $A(n-1)$. The left side of Figure~\ref{fig400} presents the antiprism $A(5)$. Put an ideal right-angled octahedron on one of its faces to the triangular face $ABC$. Since the triangular faces of the antiprism and the triangular faces of the octahedron are ideal triangles, these faces are pairwise isometric. The remaining seven faces of the octahedron are drawn inside the triangle $ABC$ in the middle of Figure~\ref {fig400}. Recall that the dihedral angles at the edges of the antiprism and at the edges of the octahedron are equal to $\pi/2$. Therefore, when we combine the antiprism and the octahedron along the triangular face $ABC$ in the resulting polyhedron, the angles at the edges $AB$, $BC$ and $AC$ will be equal to $\pi $ and the corresponding faces will belong to the same plane in pairs. Thus the polyhedron obtained by combining the antiprism and the octahedron will have a combinatorial structure as in the right part of Figure~\ref{fig400}, where the edges $AB$, $BC$ and $AC$ are absent.
\begin{figure}[h]
\begin{center} 
	\unitlength=0.28mm
	\begin{picture}(0,120) 
	\put(-160,0){\begin{picture}(0,110)(0,0)
		\thicklines 
		\qbezier(0,0)(0,0)(-60,30)
		\qbezier(0,0)(0,0)(-20,40) 
		\qbezier(0,0)(0,0)(60,30)
		\qbezier(0,0)(0,0)(20,40) 
		\qbezier(-60,30)(-60,30)(-60,110)
		\qbezier(-60,30)(-60,30)(-20,40) 
		\qbezier(-60,30)(-60,30)(-40,70)
		\qbezier(60,30)(60,30)(60,110)
		\qbezier(60,30)(60,30)(20,40) 
		\qbezier(60,30)(60,30)(40,70)
		\qbezier(-60,110)(-60,110)(-40,70) 
		\qbezier(60,110)(60,110)(40,70) 
		\qbezier(-20,40)(-20,40)(20,40) 
		\qbezier(-20,40)(-20,40)(-40,70)
		\qbezier(-60,110)(-60,110)(-40,70)  
		\qbezier(20,40)(20,40)(40,70)
		\qbezier(60,110)(60,110)(40,70) 
		\qbezier(0,80)(0,80)(-40,70)
		\qbezier(0,80)(0,80)(40,70)
		\qbezier(0,80)(0,80)(-60,110)
		\qbezier(0,80)(0,80)(60,110)
		\qbezier(-60,110)(-60,110)(60,110)
		\put(-66,110){\makebox(0,0)[cc]{$A$}}
		\put(66,110){\makebox(0,0)[cc]{$B$}}
		\put(0,74){\makebox(0,0)[cc]{$C$}}
		\end{picture}
	}
	\put(0,0){\begin{picture}(0,110)(0,0)
		\thicklines 
		\qbezier(0,0)(0,0)(-60,30)
		\qbezier(0,0)(0,0)(-20,40) 
		\qbezier(0,0)(0,0)(60,30)
		\qbezier(0,0)(0,0)(20,40) 
		\qbezier(-60,30)(-60,30)(-60,110)
		\qbezier(-60,30)(-60,30)(-20,40) 
		\qbezier(-60,30)(-60,30)(-40,70)
		\qbezier(60,30)(60,30)(60,110)
		\qbezier(60,30)(60,30)(20,40) 
		\qbezier(60,30)(60,30)(40,70)
		\qbezier(-60,110)(-60,110)(-40,70) 
		\qbezier(60,110)(60,110)(40,70) 
		\qbezier(-20,40)(-20,40)(20,40) 
		\qbezier(-20,40)(-20,40)(-40,70)
		\qbezier(-60,110)(-60,110)(-40,70)  
		\qbezier(20,40)(20,40)(40,70)
		\qbezier(60,110)(60,110)(40,70) 
		\qbezier(0,80)(0,80)(-40,70)
		\qbezier(0,80)(0,80)(40,70)
		\qbezier(0,80)(0,80)(-60,110)
		\qbezier(0,80)(0,80)(60,110)
		\qbezier(-60,110)(-60,110)(60,110)
		\put(-66,110){\makebox(0,0)[cc]{$A$}}
		\put(66,110){\makebox(0,0)[cc]{$B$}}
		\put(0,74){\makebox(0,0)[cc]{$C$}}
		\qbezier(0,105)(0,105)(-60,110)
		\qbezier(0,105)(0,105)(60,110)
		\qbezier(0,105)(0,105)(-10,95)
		\qbezier(0,105)(0,105)(10,95)
		\qbezier(-60,110)(-60,110)(-10,95)
		\qbezier(60,110)(60,110)(10,95)
		\qbezier(-10,95)(-10,95)(10,95)
		\qbezier(0,80)(0,80)(-10,95)
		\qbezier(0,80)(0,80)(10,95)
		\end{picture}
	}
	\put(160,0){\begin{picture}(0,110)(0,0)
		\thicklines 
		\qbezier(0,0)(0,0)(-60,30)
		\qbezier(0,0)(0,0)(-20,40) 
		\qbezier(0,0)(0,0)(60,30)
		\qbezier(0,0)(0,0)(20,40) 
		\qbezier(-60,30)(-60,30)(-60,110)
		\qbezier(-60,30)(-60,30)(-20,40) 
		\qbezier(-60,30)(-60,30)(-40,70)
		\qbezier(60,30)(60,30)(60,110)
		\qbezier(60,30)(60,30)(20,40) 
		\qbezier(60,30)(60,30)(40,70)
		\qbezier(-60,110)(-60,110)(-40,70) 
		\qbezier(60,110)(60,110)(40,70) 
		\qbezier(-20,40)(-20,40)(20,40) 
		\qbezier(-20,40)(-20,40)(-40,70)
		\qbezier(-60,110)(-60,110)(-40,70)  
		\qbezier(20,40)(20,40)(40,70)
		\qbezier(60,110)(60,110)(40,70) 
		\qbezier(0,80)(0,80)(-40,70)
		\qbezier(0,80)(0,80)(40,70)
		\qbezier(0,105)(0,105)(-60,110)
		\qbezier(0,105)(0,105)(60,110)
		\qbezier(0,105)(0,105)(-10,95)
		\qbezier(0,105)(0,105)(10,95)
		\qbezier(-60,110)(-60,110)(-10,95)
		\qbezier(60,110)(60,110)(10,95)
		\qbezier(-10,95)(-10,95)(10,95)
		\qbezier(0,80)(0,80)(-10,95)
		\qbezier(0,80)(0,80)(10,95)
		\end{picture}
	}
\end{picture}
\end{center} \caption{ $A(5)$  and its union with an octahedron.} \label{fig400}
\end{figure}
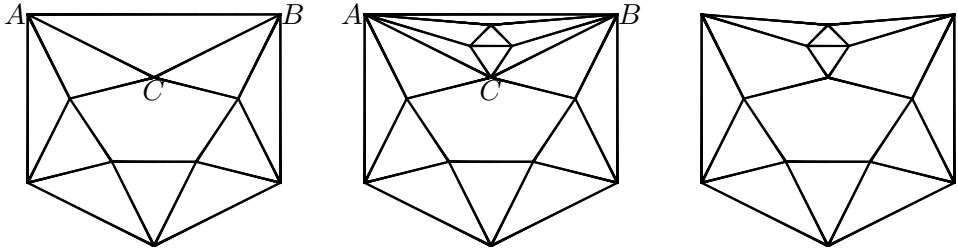
It is easy to see that the resulting polyhedron coincides combinatorially with the polyhedron $A(6)^{*}$ and is also right-angled.
\end{proof} 

\subsection{The Lobachevsky function} 

Traditionally, volumes of polyhedra in three-dimensional hyperbolic space are computed in terms of function
$$
\Lambda(\theta) = - \int\limits_0^{\theta} \log | 2 \sin (t) | \, {\rm d} t , \label{lob}
$$
which J.~Milnor introduced in the survey~\cite{Mil} and called it the \emph{Lobachevsky function}. He demonstrated that the volume of an ideal tetrahedron $T(\alpha, \beta, \gamma)$ in a three-dimensional hyperbolic space with dihedral angles $\alpha$, $\beta$ and $\gamma$ (meaning that at one of the vertices dihedral angles are $\alpha$, $\beta$, $\gamma$, where $\alpha + \beta + \gamma = \pi$, and dihedral angles at the opposite edges of the tetrahedron coincide) is calculated by the formula:
$$
\operatorname{vol} (T(\alpha, \beta, \gamma)) = \Lambda (\alpha) + \Lambda (\beta) + \Lambda (\gamma). 
$$
Splitting an ideal right-angled octahedron into four ideal tetrahedra with dihedral angles $\alpha = \pi/2$, $\beta = \pi/4$, $\gamma=\pi/4$, and using $\Lambda (\pi/2) =0$, we obtain
$$
v_{8} = 8 \Lambda \left( \frac{\pi}{4} \right) = 3.663862376708876\dots . . 
$$

A formula for the volume of a right-angled antiprism was presented by W.~Thurston in his well-known lectures~\cite[Chapter~6.8]{Th78}, where the antiprism was called by a drum with triangular sides, and its volume was used to calculate the volume of the complement to some link in a three-dimensional sphere.

\begin{theorem} \cite{Th78} \label{theorem2.4}
For $n \geq 3$ the volume of a right-angled $n$--antiprism is given by 
$$
\operatorname{vol} (A(n)) = 2 n \, \left[ \Lambda \left( \frac{\pi}{4} + \frac{\pi}{2n} \right) + \Lambda \left( \frac{\pi}{4} - \frac{\pi}{2n} \right) \right] ,
$$
where $\Lambda (x)$ is the Lobachevsky function. 
\end{theorem}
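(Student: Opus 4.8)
The plan is to exploit the large symmetry group of the ideal right-angled antiprism and to reduce the computation to a single ideal tetrahedron via Milnor's formula $\operatorname{vol} (T(\alpha,\beta,\gamma)) = \Lambda(\alpha) + \Lambda(\beta) + \Lambda(\gamma)$. First I would fix a symmetric realization: place $A(n)$ in the upper half-space model of $\mathbb H^{3}$ so that the antiprismatic axis is the vertical geodesic with endpoints $0$ and $\infty$. Then the $2n$ ideal vertices lie at longitudes $k\pi/n$ for $0\le k<2n$, the $n$ ``top'' vertices on one circle and the $n$ ``bottom'' vertices on a concentric circle of a different radius, offset in longitude by $\pi/n$. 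By Andreev's theorem (Theorem~\ref{theoremAndreev}), or equivalently Rivin's theorem (Theorem~\ref{theoremRivin}), a right-angled realization of $A(n)$ exists and is unique up to isometry, so it must coincide with this maximally symmetric one; the single free parameter, the ratio of the two radii, is then pinned down by the orthogonality conditions expressing that adjacent faces meet at right angles.

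Next I would record the symmetry. The orientation-preserving symmetries form a dihedral group $D_{n}$ of order $2n$ (the $n$-fold rotation about the axis together with the $n$ half-turns about horizontal geodesics), while the full symmetry group $D_{nd}$ of order $4n$ in addition contains the vertical mirror planes through the axis. The goal is to cut $A(n)$ into $2n$ mutually congruent ideal tetrahedra that are permuted simply transitively by $D_{n}$; one such tetrahedron $T^{*}$ is then a fundamental domain for $D_{n}$, and $\operatorname{vol}(A(n)) = 2n\,\operatorname{vol}(T^{*})$.

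The decisive step is to identify the dihedral angles of $T^{*}$. Here the two relevant angles appear as halves: the right dihedral angles of $A(n)$ contribute $\tfrac12\cdot\tfrac{\pi}{2} = \tfrac{\pi}{4}$, and the rotational sector angle $\tfrac{\pi}{n}$ about the axis contributes $\tfrac12\cdot\tfrac{\pi}{n} = \tfrac{\pi}{2n}$, the $\pm$ reflecting the two sides of a mirror plane of $D_{nd}$. Making this precise --- by locating $T^{*}$ explicitly, computing the boundary circles of the three faces meeting a chosen ideal vertex and reading off the Euclidean angles of the corresponding link triangle --- one finds the dihedral angles of $T^{*}$ to be $\tfrac{\pi}{2}$, $\tfrac{\pi}{4}+\tfrac{\pi}{2n}$, and $\tfrac{\pi}{4}-\tfrac{\pi}{2n}$, which indeed sum to $\pi$ as they must for an ideal tetrahedron. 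Milnor's formula together with $\Lambda(\pi/2)=0$ then gives
$$
\operatorname{vol}(T^{*}) = \Lambda\!\left(\tfrac{\pi}{2}\right) + \Lambda\!\left(\tfrac{\pi}{4}+\tfrac{\pi}{2n}\right) + \Lambda\!\left(\tfrac{\pi}{4}-\tfrac{\pi}{2n}\right) = \Lambda\!\left(\tfrac{\pi}{4}+\tfrac{\pi}{2n}\right) + \Lambda\!\left(\tfrac{\pi}{4}-\tfrac{\pi}{2n}\right),
$$
and multiplying by $2n$ yields the stated formula.

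I expect the main obstacle to be exactly this decomposition-and-angle step: verifying that the symmetric cut really produces genuine \emph{ideal} tetrahedra --- rather than pieces carrying extra ideal vertices, as a naive slicing by the $2n$ vertical half-planes through the axis does --- and rigorously computing their angles instead of merely invoking the half-angle heuristic. As a consistency check I would confirm the case $n=3$, where $A(3)$ is the octahedron: the formula gives $6\bigl[\Lambda(\tfrac{5\pi}{12})+\Lambda(\tfrac{\pi}{12})\bigr]$, which equals $8\Lambda(\tfrac{\pi}{4}) = v_{8}$ by the distribution relation $\Lambda(3\theta) = 3\sum_{k=0}^{2}\Lambda\bigl(\theta + \tfrac{k\pi}{3}\bigr)$ evaluated at $\theta=\tfrac{\pi}{12}$, using $\Lambda(\tfrac{3\pi}{4}) = -\Lambda(\tfrac{\pi}{4})$.
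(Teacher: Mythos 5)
The paper itself offers no proof of this theorem: it is quoted directly from Thurston's notes \cite{Th78}, so your proposal must be judged on its own merits. Your framing is sound in several respects: invoking Rivin's theorem for existence and uniqueness up to isometry, hence for the maximally symmetric realization; the final angles $\bigl(\tfrac{\pi}{2},\tfrac{\pi}{4}+\tfrac{\pi}{2n},\tfrac{\pi}{4}-\tfrac{\pi}{2n}\bigr)$; and the $n=3$ consistency check via the distribution relation, which is correct. But the central step --- cutting $A(n)$ into $2n$ mutually congruent \emph{ideal} tetrahedra $T^{*}$ permuted simply transitively by $D_{n}$ --- is not merely the ``main obstacle'' you flag; it is false, and no amount of explicit computation will locate such a $T^{*}$. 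Since $A(n)$ has finite volume, its closure meets the sphere at infinity in exactly its $2n$ ideal vertices, so any ideal tetrahedron contained in $A(n)$ must be spanned by four of those vertices. Already for $n=3$ this is fatal: realize the right-angled ideal octahedron with vertices $0,\infty,\pm 1,\pm i$ in the upper half-space model; every nondegenerate $4$-subset of these six points spans a tetrahedron isometric to $T(\tfrac{\pi}{2},\tfrac{\pi}{4},\tfrac{\pi}{4})$ (the degenerate subsets, such as $\{1,-1,i,-i\}$, are concyclic), so a piece $T(\tfrac{\pi}{2},\tfrac{5\pi}{12},\tfrac{\pi}{12})$ cannot occur in any honest ideal subdivision. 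There is also an internal inconsistency in the fundamental-domain reading: a fundamental domain for $D_{n}$ must contain a segment of the rotation axis in its boundary with wedge angle $\tfrac{\pi}{n}$ (or $\tfrac{2\pi}{n}$) along it, and your $T^{*}$ has no such dihedral angle --- the coincidence $\tfrac{\pi}{4}-\tfrac{\pi}{2n}=\tfrac{\pi}{n}$ happens only at $n=6$. Note too that the axis endpoints $0$ and $\infty$ are not ideal points of the polyhedron (the axis exits through the interiors of the top and bottom faces), so tetrahedra with vertices there are not subsets of $A(n)$ at all.

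The repair is to give up ideality of the pieces, which is essentially what Thurston's cited computation does for his ``drums.'' Keep your symmetric slicing by the $2n$ vertical half-planes through the axis and the vertices, but accept that each of the $2n$ congruent wedges has two \emph{finite} vertices where the axis pierces the two $n$-gonal faces; each wedge then splits into orthoschemes with one ideal vertex, whose volumes Milnor expresses through the Lobachevsky function (his formula for a tetrahedron with one ideal vertex above a right Euclidean triangle). Alternatively one may cone from $0$ and $\infty$ with signed volumes, which is legitimate for a volume computation even though these points do not lie in the closure of $A(n)$. Either way, after the standard identities ($\Lambda$ odd, $\pi$-periodic, and the doubling law) the per-wedge contribution assembles into $\Lambda\bigl(\tfrac{\pi}{4}+\tfrac{\pi}{2n}\bigr)+\Lambda\bigl(\tfrac{\pi}{4}-\tfrac{\pi}{2n}\bigr)$, giving the stated formula. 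So your target angles and endgame are right, and your heuristic correctly predicts the answer, but the route as written --- an equidecomposition of $A(n)$ into $2n$ congruent ideal tetrahedra --- cannot be carried out, and the argument is incomplete without the orthoscheme (or signed-coning) computation that replaces it.
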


Theorem~\ref{theorem2.4} implies that $\operatorname{vol} (A(n))$ is asymptotically equivalent to $\frac{v_{8}}{2} n$ when $n \to \infty$. In particular, the above formula gives the volume of an ideal right-angled octahedron $A (3)$:
$$
\operatorname{vol} (A(3)) = 8 \Lambda ( \pi /4 ) = v_{8}, 
$$
where we used properties of the Lobachevsky function~\cite{Ratcliffe}. 

\begin{theorem} \label{theorem2.5} 
For the volume of the twisted antiprism  $A(n)^{*}$, $n \geq 4$, the following formula holds: 
$$
\operatorname{vol} (A(n)^{*})     
= 2(n-1) \, \left[ \Lambda \left( \frac{\pi}{4} + \frac{\pi}{2(n-1)} \right) + \Lambda \left( \frac{\pi}{4} - \frac{\pi}{2(n-1)} \right) \right] + 8 \Lambda \left( \frac{\pi}{4} \right).
$$
\end{theorem}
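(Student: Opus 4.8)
The plan is to obtain this formula as an immediate consequence of the two preceding results, Lemma~\ref{lemma2.1} and Theorem~\ref{theorem2.4}, together with the already-recorded value of $v_8$. The essential geometric work has in fact been done: Lemma~\ref{lemma2.1} realizes the twisted antiprism $A(n)^{*}$ as the union, glued along a common ideal triangular face, of the antiprism $A(n-1)$ and an ideal right-angled octahedron, and thereby yields the additive relation $\operatorname{vol}(A(n)^{*}) = \operatorname{vol}(A(n-1)) + v_{8}$. So the remaining task is purely one of substituting known closed forms into this relation.

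Concretely, I would first apply Lemma~\ref{lemma2.1} to write
$$
\operatorname{vol}(A(n)^{*}) = \operatorname{vol}(A(n-1)) + v_{8}.
$$
Next I would invoke Theorem~\ref{theorem2.4} with the index $n$ replaced by $n-1$, which computes the antiprism volume as
$$
\operatorname{vol}(A(n-1)) = 2(n-1)\left[\Lambda\left(\frac{\pi}{4} + \frac{\pi}{2(n-1)}\right) + \Lambda\left(\frac{\pi}{4} - \frac{\pi}{2(n-1)}\right)\right].
$$
Finally I would substitute the value $v_{8} = 8\,\Lambda(\pi/4)$ established by splitting the octahedron into four ideal tetrahedra with dihedral angles $\pi/2,\pi/4,\pi/4$. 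Adding the two contributions reproduces the asserted identity verbatim.

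Since every ingredient is already in hand, I do not expect a genuine obstacle here; the statement is essentially a bookkeeping corollary of Lemma~\ref{lemma2.1} and Theorem~\ref{theorem2.4}. The only point worth a moment's verification is the compatibility of hypotheses: Lemma~\ref{lemma2.1} is stated for $n \geq 4$, and the appeal to Thurston's formula for $A(n-1)$ requires $n-1 \geq 3$. These two constraints coincide, so the argument applies throughout the stated range and no boundary case is lost.
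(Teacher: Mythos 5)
Your proposal is correct and matches the paper's proof exactly: the paper proves Theorem~\ref{theorem2.5} precisely by combining Lemma~\ref{lemma2.1} with Theorem~\ref{theorem2.4} (applied to $A(n-1)$) and the value $v_{8} = 8\Lambda(\pi/4)$. Your additional check that the hypotheses $n \geq 4$ and $n-1 \geq 3$ coincide is a sound, if routine, verification that the paper leaves implicit.
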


\begin{proof}
It follows from Lemma~\ref{lemma2.1} and Theorem~\ref{theorem2.4}.
\end{proof}

\subsection{Volumes of polyhedra with at most 23 faces}

As we noted above, the Euler's formula implies that any ideal right-angled polyhedron has at least $8$ triangular faces.

For each $n = 8, 10, 11, \ldots, 23$ Table~\ref{table-2} gives the number of ideal right-angled polyhedra in $\mathbb H^{3}$ with $n$ faces; the number of different volumes of them; the minimum and maximum volume values. 
%%%%%%%%%%%%%%%%
\begin{table}[ht] \caption{Ideal right-angled polyhedra.} \label{table-2} 
\begin{center} 
\begin{tabular}{|r|r|r|r|r|} \hline
$\#$  of faces & $\#$ of polyhedra & $\#$ of volumes & $\min$ volume & $\max$ volume \\ \hline \hline 
8 & 1 & 1 & 3.663863  & 3.663863\\ \hline 
9 & 0 & 0 & - & -\\ \hline
10 & 1 & 1 & 6.023046 & 6.023046 \\ \hline
11 & 1 & 1 & 7.327725 & 7.327725 \\ \hline
12 & 2 & 2 & 8.137885  & 8.612415  \\ \hline
13 & 2 & 2 & 9.686908 & 10.149416   \\ \hline
14 & 9 & 7 &  10.149416 & 12.046092  \\ \hline
15 & 11 & 7 &  11.801747 & 13.350771   \\ \hline
16 & 37 & 17 & 12.106298 & 14.832681   \\ \hline
17 & 79 & 31 & 13.813278 & 16.331571   \\ \hline
18 & 249 & 79 & 14.030461 & 18.069138   \\ \hline
19 & 671 & 172 & 15.770160  & 19.523353  \\ \hline
20 & 2182 & 495 & 15.933385  & 21.241543  \\ \hline
21 & 6692 & 1359 & 17.694323 & 22.894415   \\ \hline
22 & 22131 & 4276 & 17.821704 & 24.599233   \\ \hline
23 & 72405 & 13031 & 19.597248 & 26.228126  \\ \hline
\end{tabular}
\end{center}
\end{table}

Combinatorial enumeration of polyhedra was done with using the computer program {\tt plantri}~\cite{plantri}. Volumes of hyperbolic polyhedra were calculated with the applying some modification of the computer program  {\tt Snap\-Pea}~\cite{SnapPea}.  Values of volumes are given up to $10^{-6}$. 

Denote by  $P_{n}^{\min}$ and $P_{n}^{\max}$  polyhedra which realize minimal and maximal values of volumes in the class of all polyhedra with $n$ faces.  The first seven polyhedra with smallest volume are given in Table~\ref{table-3}.
\begin{table}[!ht]
\caption{The first seven ideal right-angled polyhedra} \label{table-3}
\begin{tabular}{ccc} %\hline
\includegraphics[width=0.3\textwidth]{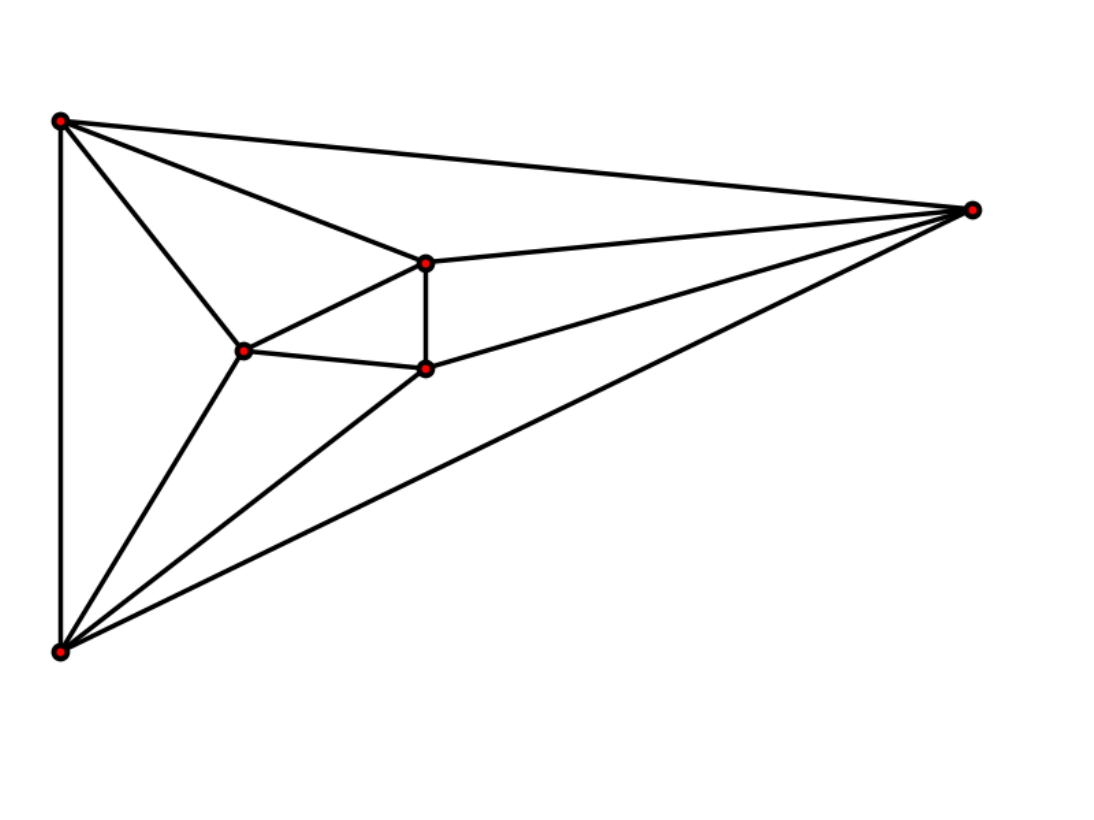} & 
\includegraphics[width=0.3\textwidth]{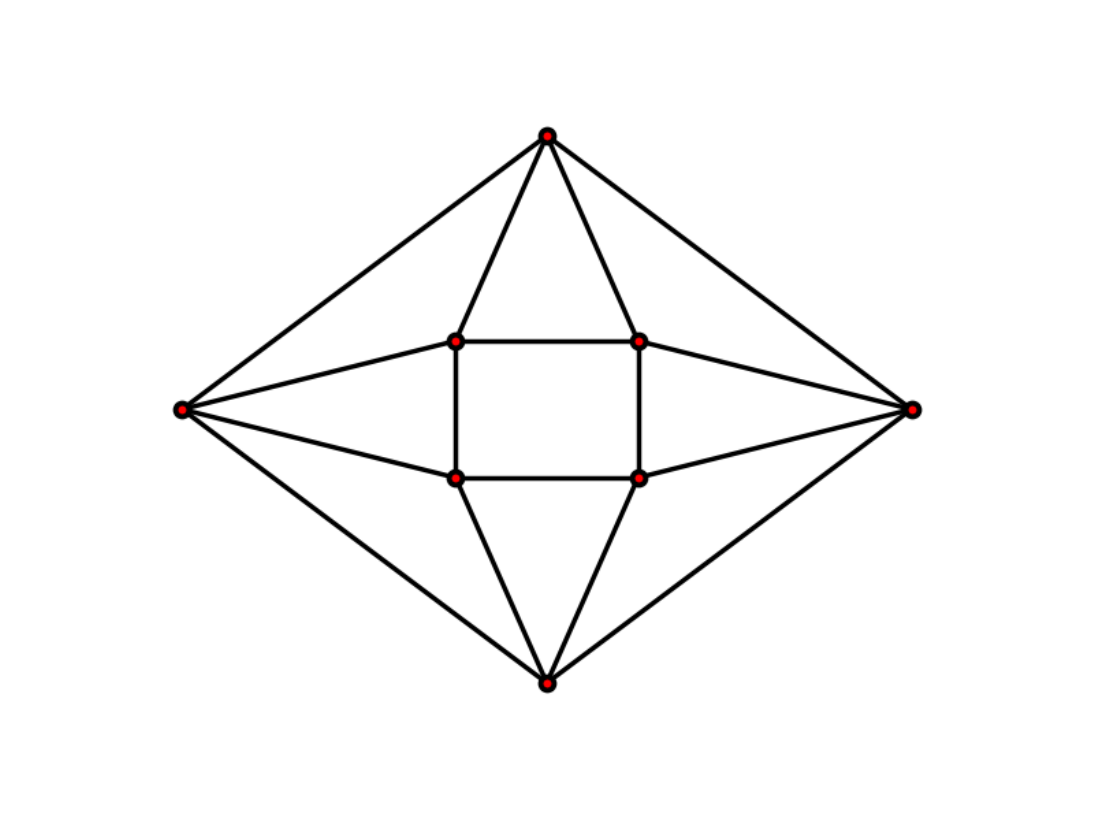} &
\includegraphics[width=0.3\textwidth]{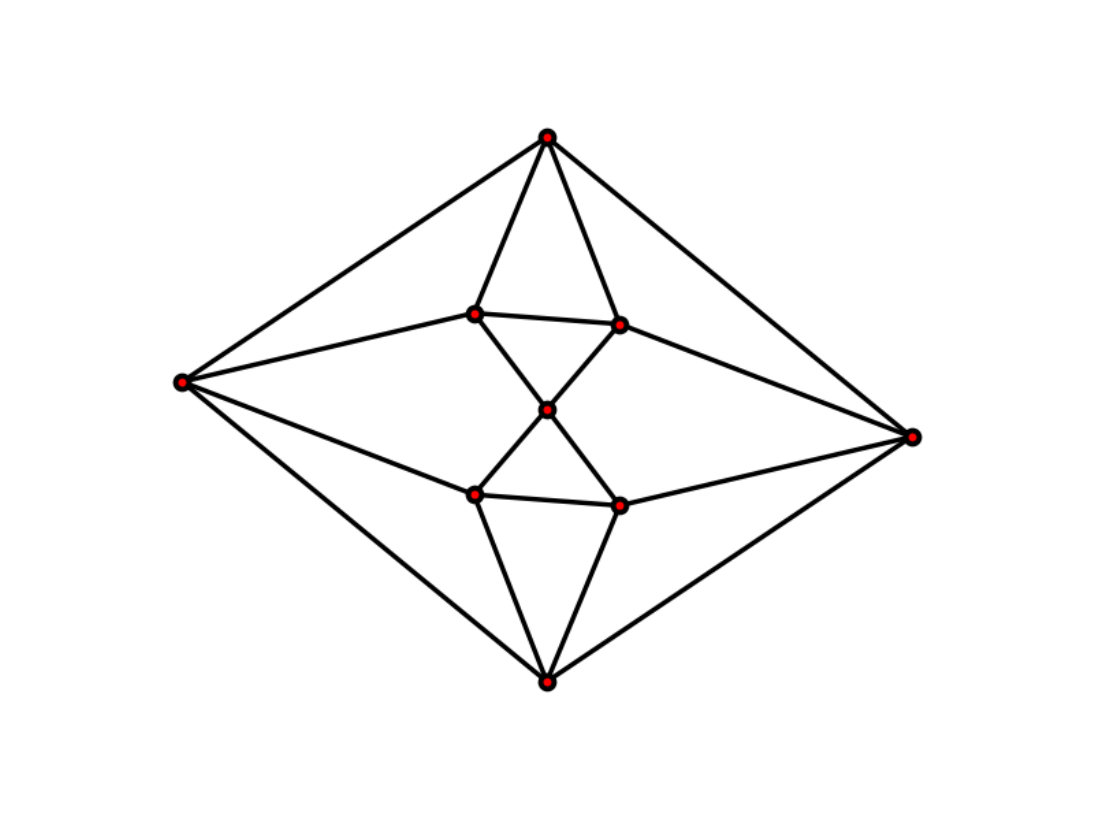} \\ 
(1) $\operatorname{vol} (P_{8}) = 3.663863$ & (2) $\operatorname{vol} (P_{10}) = 6.023046$ &   (3) $\operatorname{vol} (P_{11}) = 7.327725$ \\ %\hline
\end{tabular}
\begin{tabular}{cc} %\hline
\includegraphics[width=0.4\textwidth]{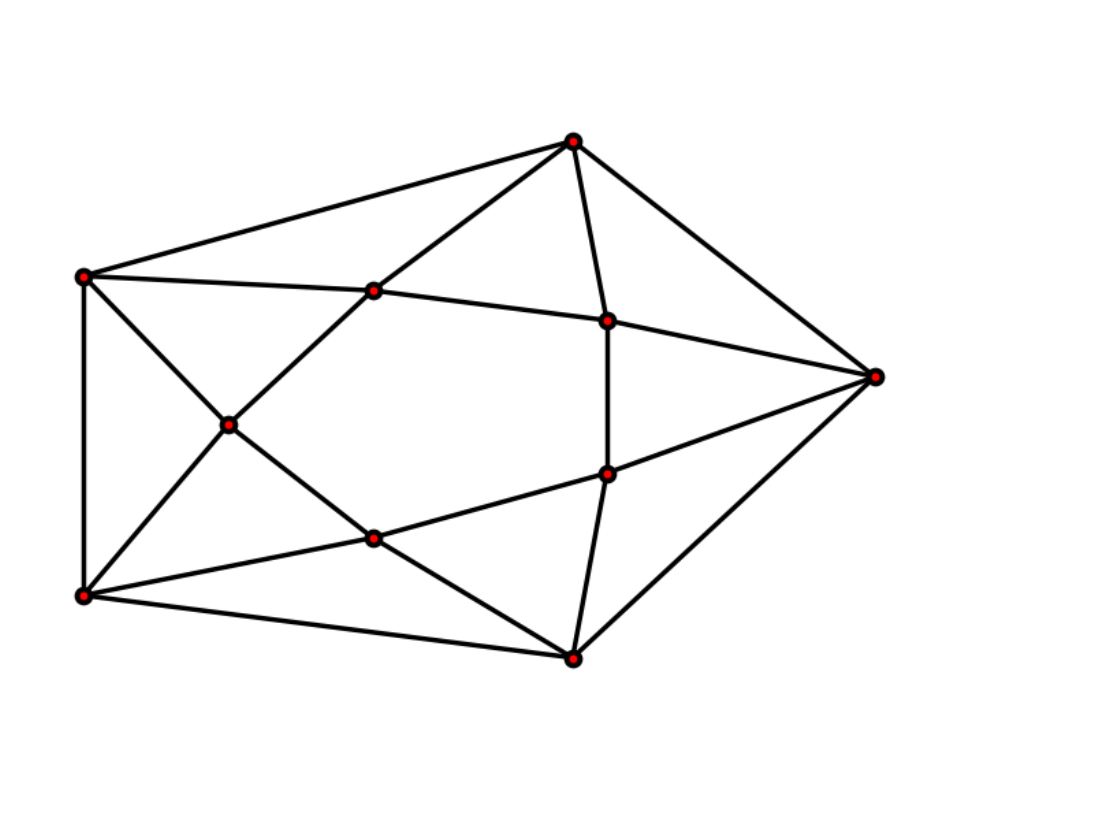} &  
\includegraphics[width=0.4\textwidth]{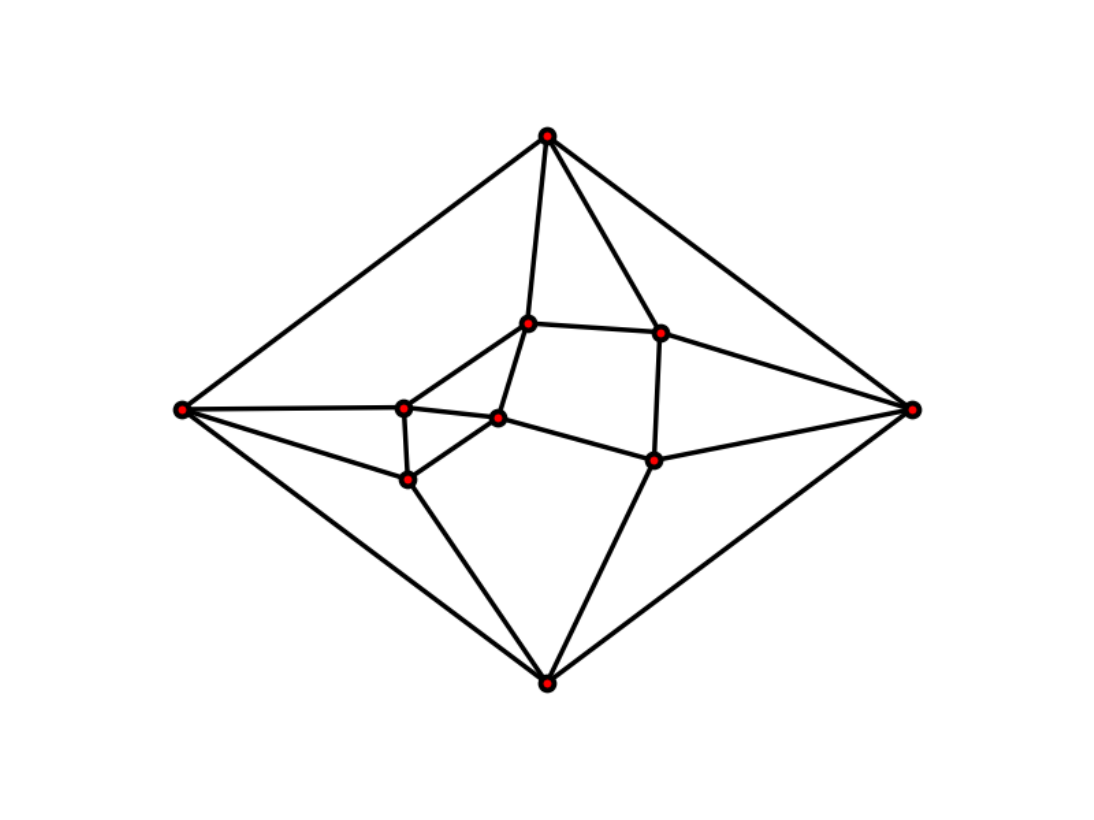} \\
(4) $\operatorname{vol} (P_{12}^{\min}) = 8.137885$  &   (5) $\operatorname{vol} (P_{12}^{\max}) =8.612415$ \\%\hline 
\includegraphics[width=0.4\textwidth]{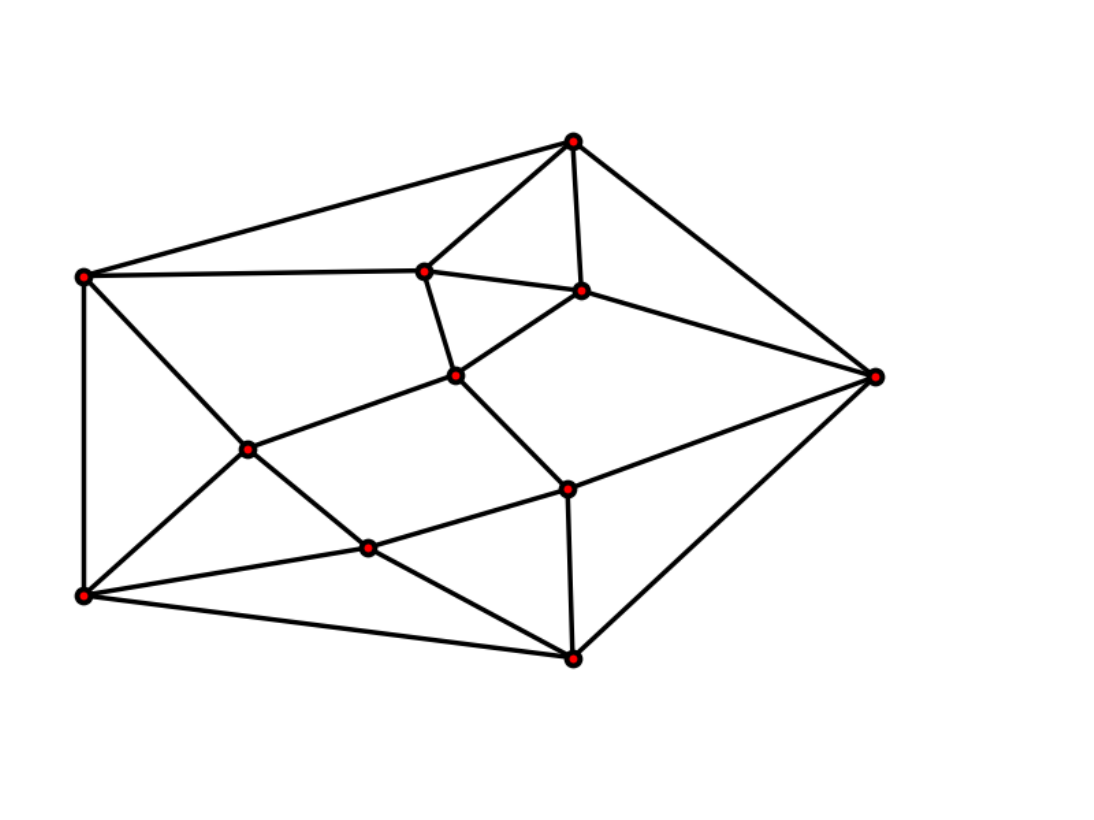} & 
\includegraphics[width=0.4\textwidth]{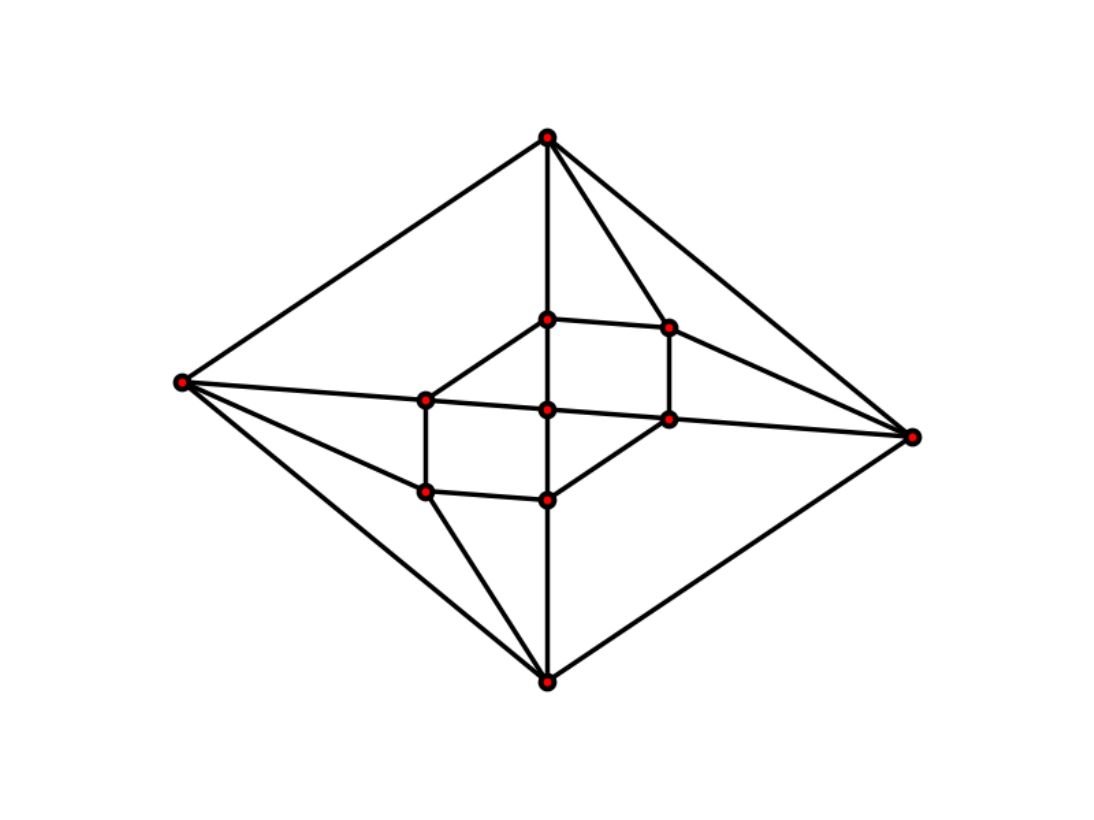} \\ 
(6) $\operatorname{vol} (P_{13}^{\min}) =  9.686908 $ & (7) $\operatorname{vol} (P_{13}^{\max}) =10.149416$  \\  %\hline
\end{tabular}
\end{table} 

The volume calculations showed that the following fact holds for $n \leq 23$. If $n$ is even, then the smallest volume is achieved on the antiprism, that is, $P_{n}^{\min} = A (k)$, where $n = 2k+2$. If $n$ is odd, then the smallest volume is achieved on the twisted antiprism, that is, $P_{n}^{\min} = A (k)^{*}$, where $n = 2k+3$. We formulate this observation in the following form. 

\begin{proposition} \label{prop}
For ideal right-angled hyperbolic polyhedra with at most 23 faces, the minimum value of volumes is achieved on antiprisms and twisted antiprisms. The minimum and maximum volumes are presented in Tables~\ref{table-2}, \ref{table-40} and~\ref{table-50}.
\end{proposition}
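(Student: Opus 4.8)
The plan is to establish the statement by a finite but exhaustive verification, organized by the number of faces. By the relation $F = V + 2$ derived from Euler's formula together with \eqref{formulamain}, every polyhedron in $\mathcal{IR}$ has at least $8$ faces, and there is no polyhedron in $\mathcal{IR}$ with $9$ faces; hence the range to examine is $n \in \{8, 10, 11, \ldots, 23\}$. For each such $n$ I would first enumerate all combinatorial types of polyhedra in $\mathcal{IR}$ with exactly $n$ faces, then compute the hyperbolic volume of each, and finally compare these volumes against the exact value for the relevant antiprism or twisted antiprism.

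For the enumeration step I would use the characterization of $\mathcal{IR}$ through dual graphs. By Theorem~\ref{theoremRivin}, a combinatorial polyhedron lies in $\mathcal{IR}$ precisely when its dual graph is a simple quadrangulation of the sphere with no $4$-cycle separating two faces, i.e. a member of the class $\mathscr{Q}_{4}$; equivalently, by Theorem~\ref{theorem2.2}, the polyhedral graph is $4$-valent, $3$-connected and cyclically $6$-connected. This is exactly the class generated by antiprisms under edge-twist moves (Theorem~\ref{theorem2.3}), and it is produced directly by the program \texttt{plantri}. Running \texttt{plantri} with the appropriate switches yields, for each $n \le 23$, the complete list of combinatorial types, whose cardinalities are recorded in the second column of Table~\ref{table-2}.

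For the volume step, each such polyhedron, being ideal and right-angled, decomposes into ideal tetrahedra whose volumes are given by Milnor's formula $\operatorname{vol}(T(\alpha,\beta,\gamma)) = \Lambda(\alpha) + \Lambda(\beta) + \Lambda(\gamma)$; this is what the modified \texttt{SnapPea} routine computes, returning volumes accurate to $10^{-6}$. Crucially, for the two candidate minimizers the volume is known in closed form: Theorem~\ref{theorem2.4} gives $\operatorname{vol}(A(k))$ for even $n = 2k+2$, and Theorem~\ref{theorem2.5} (via Lemma~\ref{lemma2.1}) gives $\operatorname{vol}(A(k)^{*})$ for odd $n = 2k+3$. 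Comparing each tabulated minimum in Table~\ref{table-2} with the exact value from these formulas confirms, for every admissible $n \le 23$, that the minimum is attained on $A(k)$ when $n$ is even and on $A(k)^{*}$ when $n$ is odd.

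The main obstacle is certifying minimality rather than merely observing it numerically. Several of the gaps between the smallest and second-smallest volumes for a fixed $n$ are small, so floating-point volumes computed only to $10^{-6}$ must be shown to separate the antiprism (or twisted antiprism) from all its competitors. I would address this by using the exact formulas of Theorems~\ref{theorem2.4} and~\ref{theorem2.5} for the candidate, evaluating $\Lambda$ with rigorous interval arithmetic, and pairing it with a certified lower bound on the volume of every other combinatorial type of the same face number. Since the number of types reaches $72405$ at $n = 23$, the verification is heavy but entirely finite, and the completeness of the \texttt{plantri} enumeration guarantees that no combinatorial type is overlooked.
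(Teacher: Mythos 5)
Your proposal follows essentially the same route as the paper: the authors likewise establish the statement by exhaustively enumerating all combinatorial types with \texttt{plantri}, computing volumes with a modified \texttt{SnapPea} to $10^{-6}$, and observing that the minima coincide with $A(k)$ for even $n=2k+2$ and $A(k)^{*}$ for odd $n=2k+3$, with the closed forms of Theorems~\ref{theorem2.4} and~\ref{theorem2.5} available for the candidates. Your added insistence on certified interval arithmetic to separate near-equal volumes is a refinement the paper does not carry out, but it does not change the method.
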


\begin{conjecture} \label{hyp2.1} 
If $n \geq 8$ is even, then the minimum volume is achieved on the antiprism, that is, $P_{n}^{\min} = A (k)$, where $n = 2k+2$. If $n\geq 11$ is odd, then the minimum volume is achieved on the twisted antiprism, that is, $P_{n}^{\min} = A (k)^{*}$, where $n = 2k+3$.
\end{conjecture}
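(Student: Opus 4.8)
Since this is stated as a conjecture, I can only propose a line of attack and point to where it breaks down; the range $n\le 23$ is already settled by Proposition~\ref{prop} through the computation recorded in Table~\ref{table-2}, and this will serve as the base of an induction. The plan is first to reformulate the assertion as a volume inequality. Because Theorems~\ref{theorem2.4} and~\ref{theorem2.5} give closed forms for $\operatorname{vol}(A(k))$ and $\operatorname{vol}(A(k)^{*})$, proving the conjecture amounts to establishing, for every $P\in\mathcal{IR}$ with $n$ faces, the lower bound $\operatorname{vol}(P)\ge\operatorname{vol}(A(k))$ when $n=2k+2$ and $\operatorname{vol}(P)\ge\operatorname{vol}(A(k)^{*})$ when $n=2k+3$; since the antiprism (resp.\ twisted antiprism) attains the bound, this identifies it as a minimizer, and strict inequality for all other $P$ would pin it down uniquely. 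I would prove this bound by induction on $n$, invoking Theorem~\ref{theorem2.3}: every $P\in\mathcal{IR}$ is either an antiprism or is obtained from some $P'\in\mathcal{IR}$ with $n-1$ faces by a single edge-twist move.

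For the inductive step I would take $P$ with $n$ faces that is neither an antiprism nor a twisted antiprism and write $P=(P')^{*}$ with $P'\in\mathcal{IR}$ having $n-1$ faces. The heart of the matter is to bound from below the volume gained in an edge-twist: if one could show that every edge-twist satisfies $\operatorname{vol}((P')^{*})-\operatorname{vol}(P')\ge\operatorname{vol}(P_{n}^{\min})-\operatorname{vol}(P_{n-1}^{\min})$, then the inductive hypothesis $\operatorname{vol}(P')\ge\operatorname{vol}(P_{n-1}^{\min})$ would immediately give $\operatorname{vol}(P)\ge\operatorname{vol}(P_{n}^{\min})$. The model to imitate is Lemma~\ref{lemma2.1}: for the one special twist in which $e_{1}$ and $e_{2}$ are adjacent through a common edge, the move is realized geometrically by gluing an ideal right-angled octahedron along a triangular face, contributing exactly $v_{8}$ and yielding the clean relation $\operatorname{vol}(A(n)^{*})=\operatorname{vol}(A(n-1))+v_{8}$. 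It is this gluing picture, converting a combinatorial move into an exact additive volume statement, that one would hope to extend.

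The main obstacle—and, I expect, the reason this remains a conjecture—is that a generic edge-twist corresponds to no octahedron gluing, so there is no exact additive formula and the increment $\operatorname{vol}((P')^{*})-\operatorname{vol}(P')$ must be controlled by genuinely geometric rather than purely combinatorial means. By Rivin's theorem (Theorem~\ref{theoremRivin}) the hyperbolic realization is rigid, so the dihedral data near $e_{1}$ and $e_{2}$ are forced by the global combinatorics and the increment is not a transparent function of the local picture; worse, the smallest achievable increment need not occur along the extremal family, so even the clean inequality above may need correction. A tempting alternative is to bypass the face-count induction and instead prove a direct lower bound $\operatorname{vol}(P)\ge cV-c'$ in the number of vertices $V$ (recall $F=V+2$), calibrated so that antiprisms are the equality case. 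Decomposing $P$ into ideal tetrahedra and summing Milnor's formula is the natural tool here, but $\Lambda$ is concave on $(0,\pi/2)$ and so bounds each tetrahedron only from above, while the tetrahedra of such a decomposition can be made arbitrarily thin, so no positive term-by-term lower bound survives. Distilling from the global right-angled constraint a sharp constant that matches $\operatorname{vol}(A(k))$ exactly, rather than the asymptotic $\tfrac{v_{8}}{2}n$ implied by Theorem~\ref{theorem2.4}, is the step I expect to be genuinely hard.
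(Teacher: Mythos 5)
You are right that there is no proof to compare against: the statement is posed as a conjecture, and the paper's entire support for it is the computation behind Proposition~\ref{prop} --- exhaustive enumeration of combinatorial types with at most $23$ faces via \texttt{plantri} and volume computation via a modified \texttt{SnapPea} (Tables~\ref{table-2}, \ref{table-40}, \ref{table-50}). So your ``base case'' is exactly the paper's evidence, and your diagnosis of why the general case is open matches the paper's own machinery: Theorem~\ref{theorem2.3} gives the combinatorial generation by edge-twists, but the only twist whose volume effect is controlled is the special one of Lemma~\ref{lemma2.1} (edges adjacent through a common edge), which is realized geometrically by gluing on an ideal right-angled octahedron and yields the exact increment $v_{8}$; a generic twist admits no such gluing picture, and by rigidity its volume increment depends on global data. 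Your side remarks also check out: $\Lambda''(\theta)=-\cot\theta<0$ on $(0,\pi/2)$, so Milnor's formula bounds each ideal tetrahedron only from above; and Atkinson's lower bound $(N-2)v_{8}/4$ from Theorem~\ref{theoremAtkinson} has the asymptotically correct slope along antiprisms (since $\operatorname{vol}(A(n))\sim n\,v_{8}/2$ with $N=2n$ vertices) but differs from $\operatorname{vol}(A(n))$ by an additive gap tending to $v_{8}/2$, which is precisely the finite-size calibration you identify as the hard step in the direct approach.

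Two caveats on your inductive skeleton. First, writing an arbitrary non-antiprism $P$ with $n$ faces as $P=(P')^{*}$ with $P'\in\mathcal{IR}$ having $n-1$ faces needs more than Theorem~\ref{theorem2.3} as literally stated: one must know that the generating sequence of Theorem~\ref{theorem2.2} keeps every intermediate graph $4$-valent, $3$-connected and cyclically $6$-connected (equivalently, keeps the dual in $\mathscr Q_{4}$). That is what the generation theorem of Brinkmann et al.\ provides, but an arbitrary inverse twist applied to $P$ can leave the class, so the choice of which twist to undo is constrained and would have to be made explicit. Second, and more importantly, your proposed increment inequality $\operatorname{vol}((P')^{*})-\operatorname{vol}(P')\ge\operatorname{vol}(P_{n}^{\min})-\operatorname{vol}(P_{n-1}^{\min})$ is strictly stronger than the conjecture itself, and the data of Table~\ref{table-2} suggest it is the wrong uniform formulation: the consecutive minimal increments alternate sharply in size (near $n=13$ they are approximately $1.55$ into odd $n$, coming from $\operatorname{vol}(A(k-1))+v_{8}-\operatorname{vol}(A(k))$, versus approximately $0.46$ into even $n$), so a single per-twist lower bound would have to match the larger value at odd steps, which there is no reason to expect of an arbitrary twist. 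In short, your account is an accurate description of the state of the problem: it correctly defers to the paper's computations for $n\le 23$, and the obstacle you name --- the absence of an additive volume formula for a generic edge-twist, with Lemma~\ref{lemma2.1} covering only one special case --- is the genuine one.
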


The statement~\ref {prop} confirms the conjecture for $n\leq 23$.

\begin{table}[!ht] \label{table-4} 
\caption{Ideal right-angled polyhedra with $n$ faces having minimum and maximum volume, $14 \leq n \leq 18$.} \label{table-40}
\begin{tabular}{cccc} %\hline
$P_{14}^{\min}$ & \includegraphics[width=0.4\textwidth]{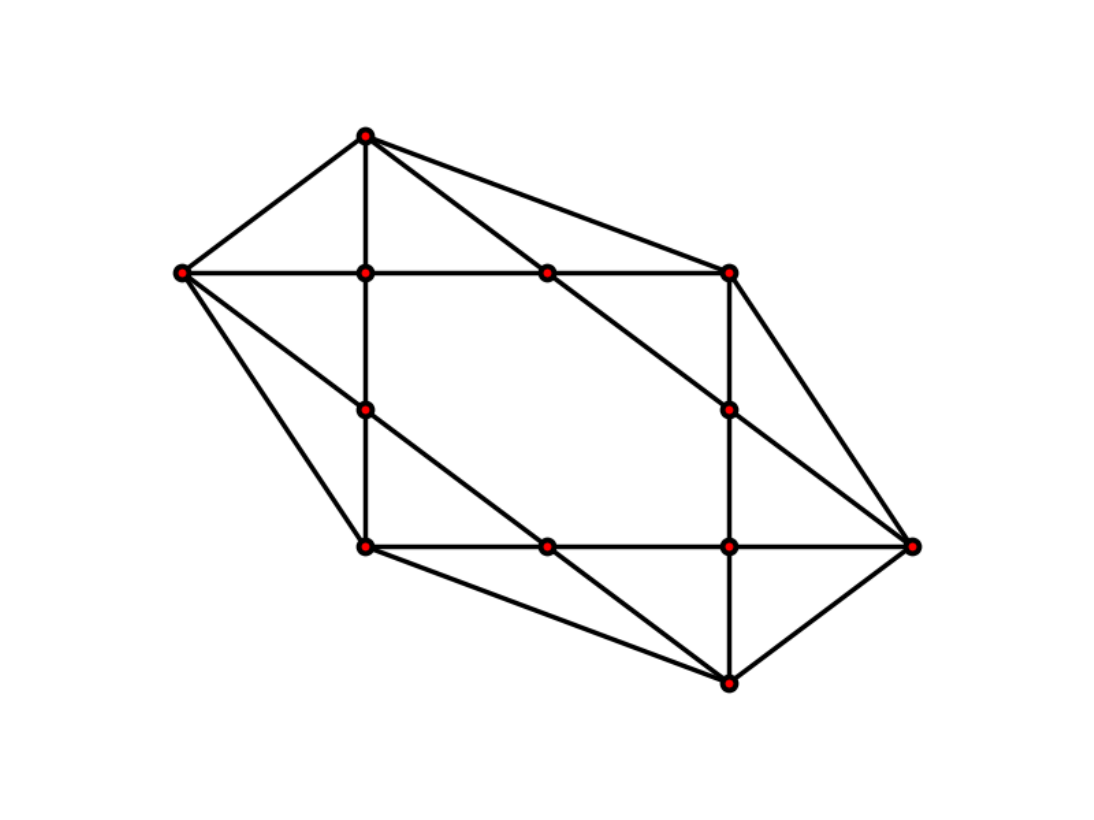} & $P_{14}^{\max}$ & \includegraphics[width=0.4\textwidth]{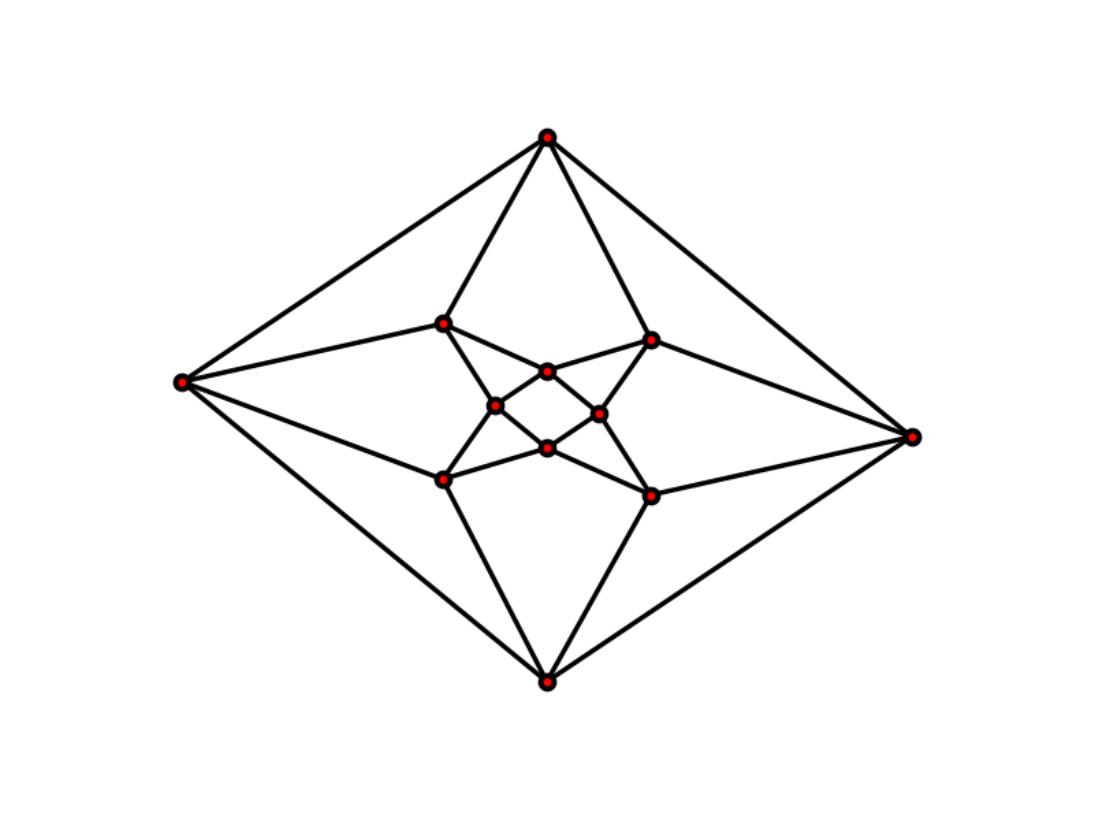} \\ 
$P_{15}^{\min}$ & \includegraphics[width=0.4\textwidth]{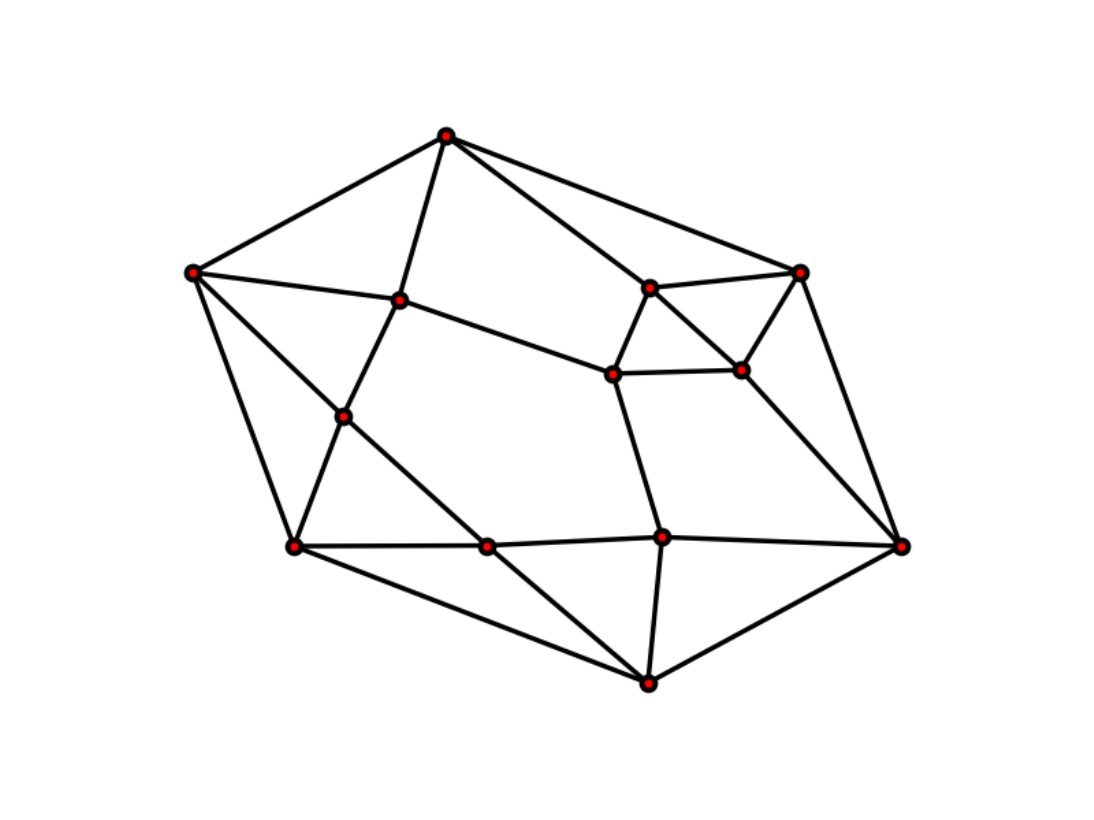} & $P_{15}^{\max}$ & \includegraphics[width=0.4\textwidth]{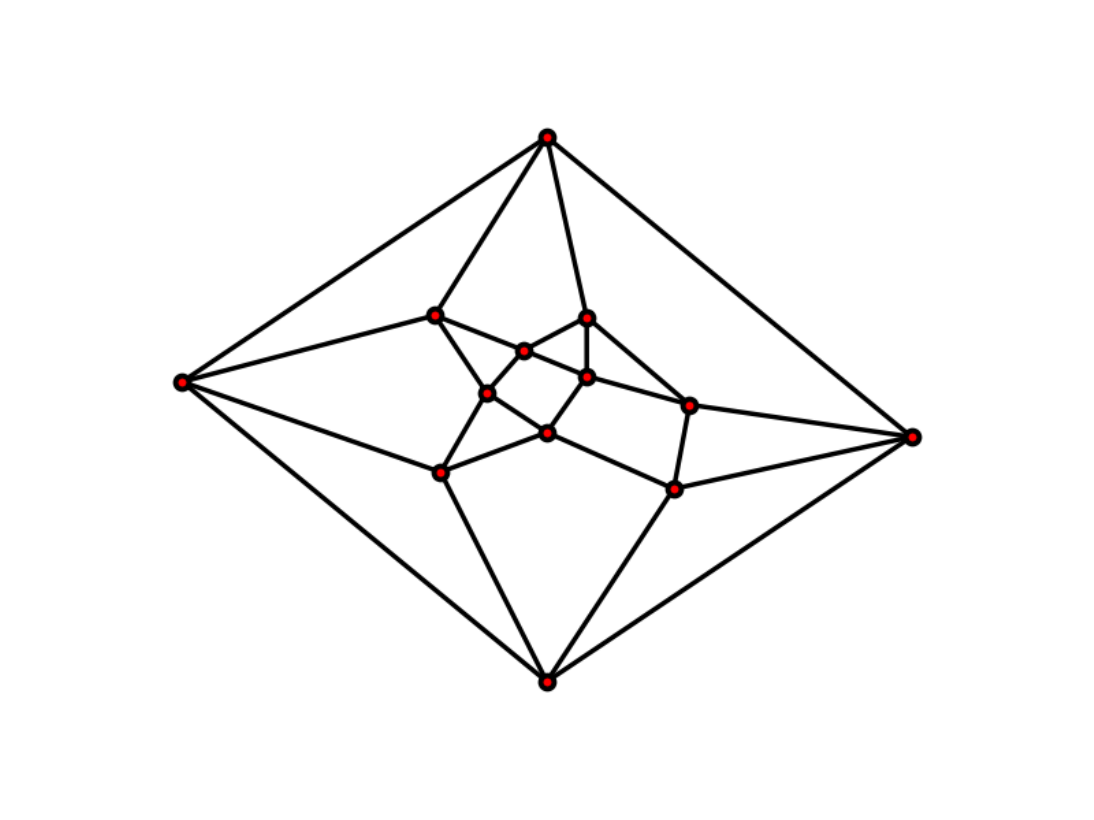} \\ 
$P_{16}^{\min}$ & \includegraphics[width=0.4\textwidth]{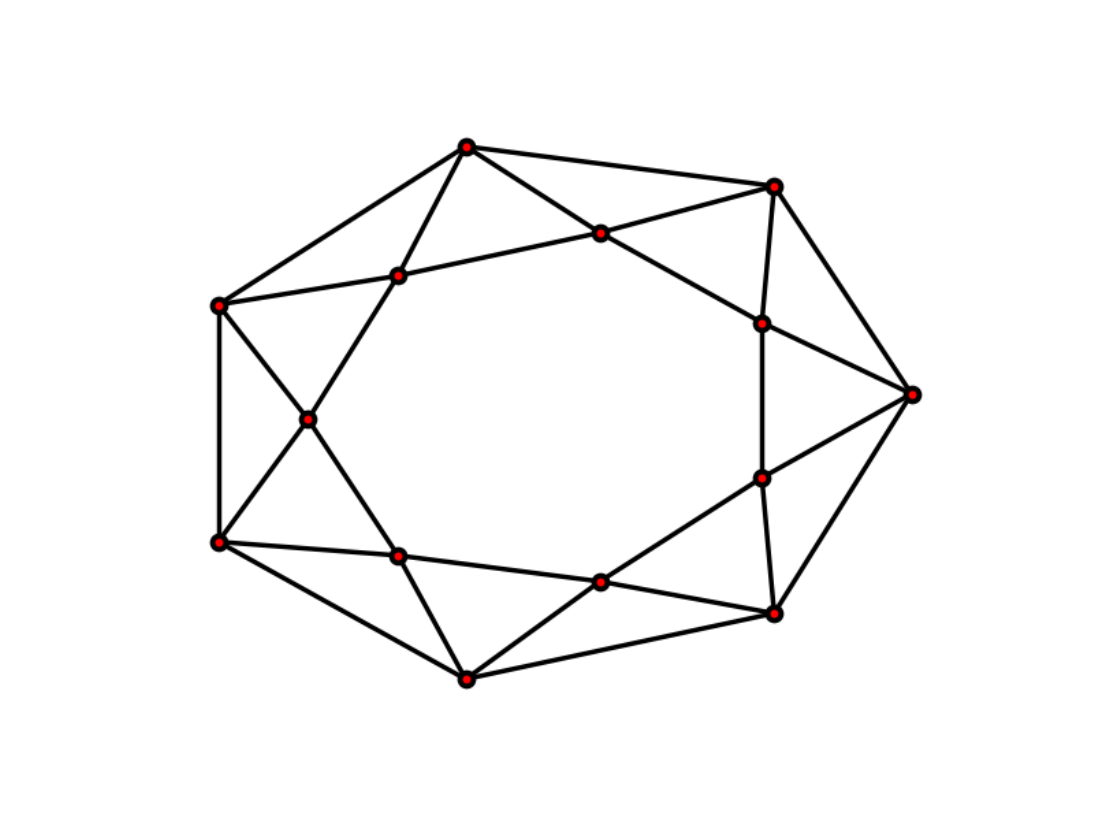} & $P_{16}^{\max}$ & \includegraphics[width=0.4\textwidth]{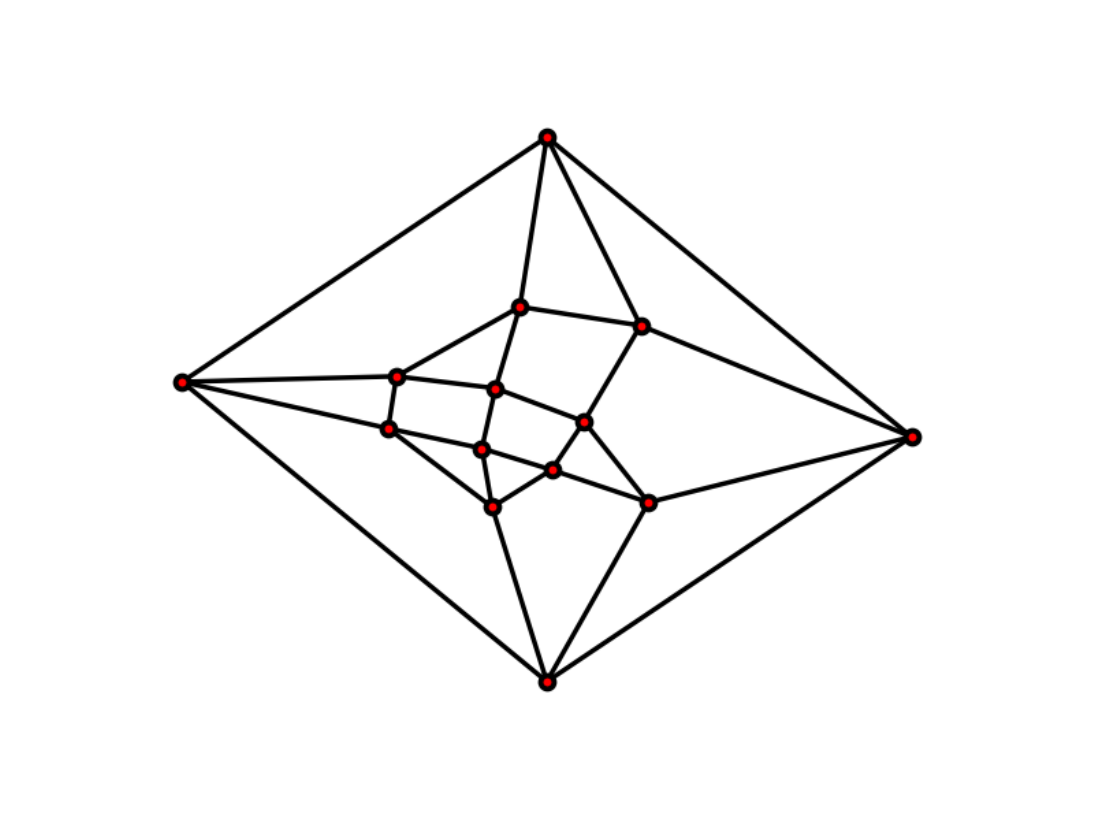} \\ 
$P_{17}^{\min}$ & \includegraphics[width=0.4\textwidth]{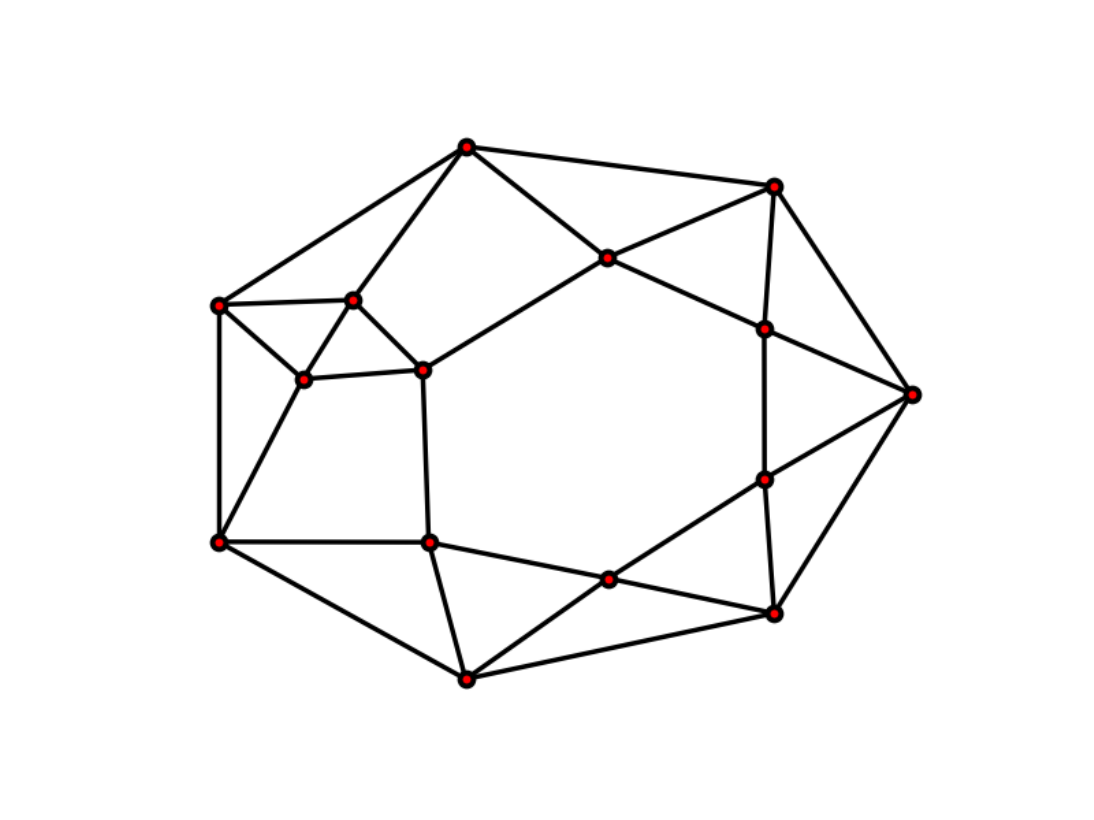} & $P_{17}^{\max}$ & \includegraphics[width=0.4\textwidth]{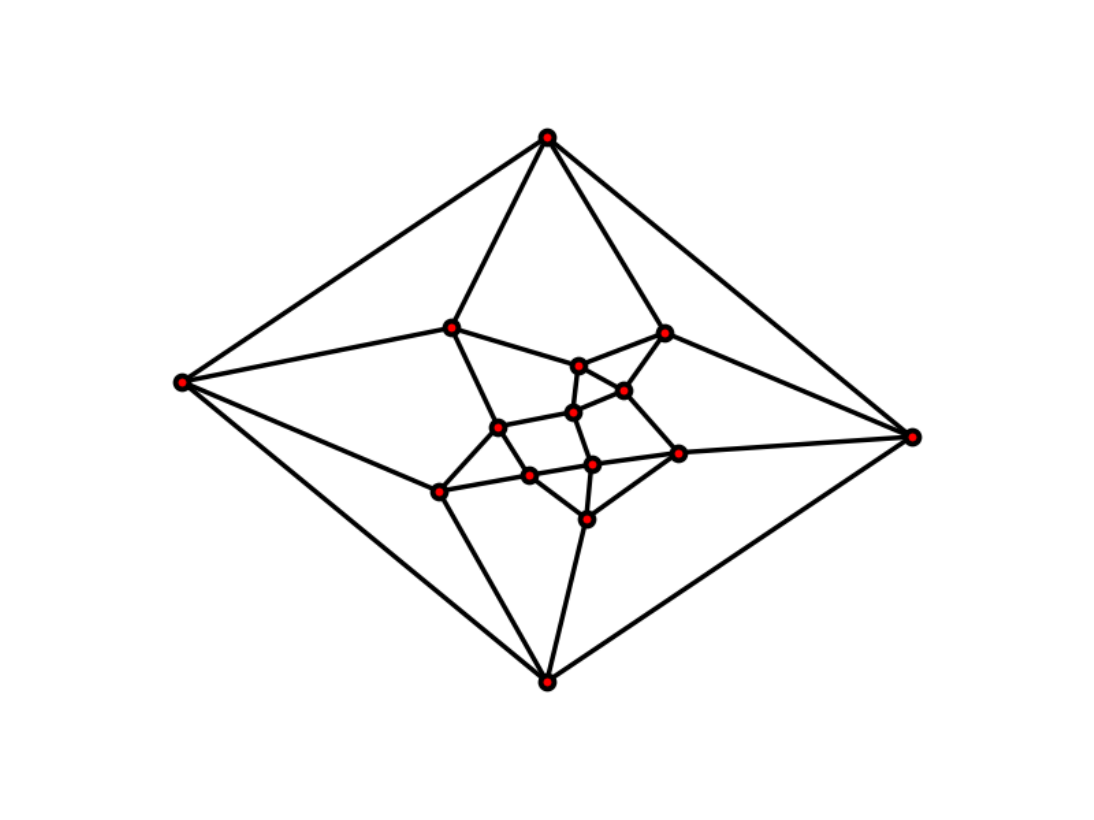} \\ 
$P_{18}^{\min}$  & \includegraphics[width=0.4\textwidth]{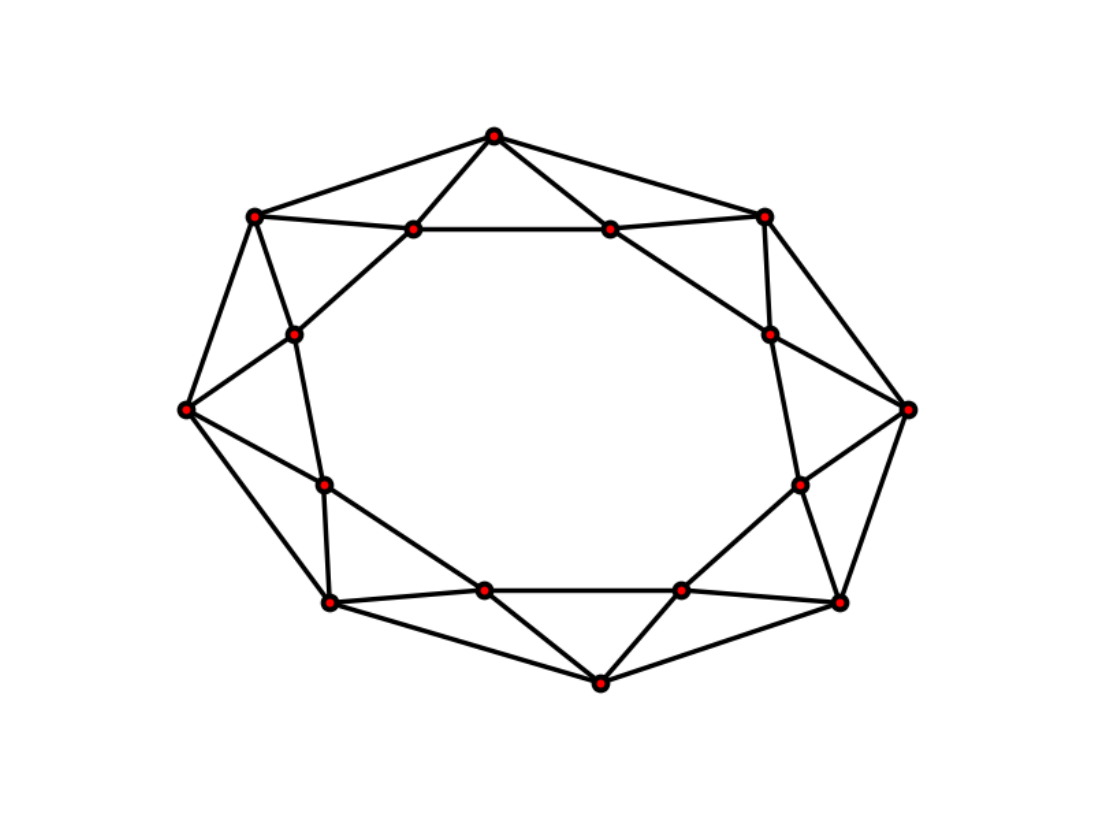} & $P_{18}^{\max}$ & \includegraphics[width=0.4\textwidth]{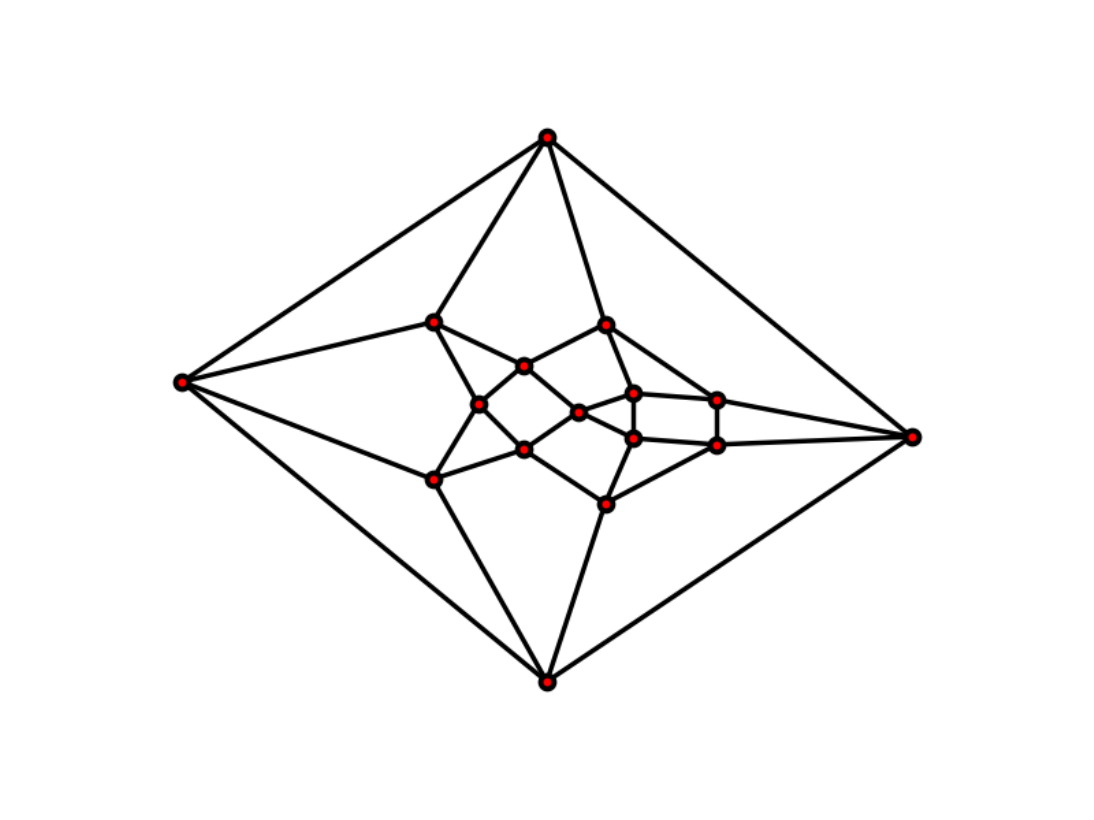} 
\end{tabular}
\end{table} 

\begin{table}[!ht] \label{table-5} 
\caption{Ideal right-angled polyhedra with $n$ faces having minimum and maximum volume, $19 \leq n \leq 23$.} \label{table-50}
\begin{tabular}{cccc} %\hline
$P_{19}^{\min}$ & \includegraphics[width=0.4\textwidth]{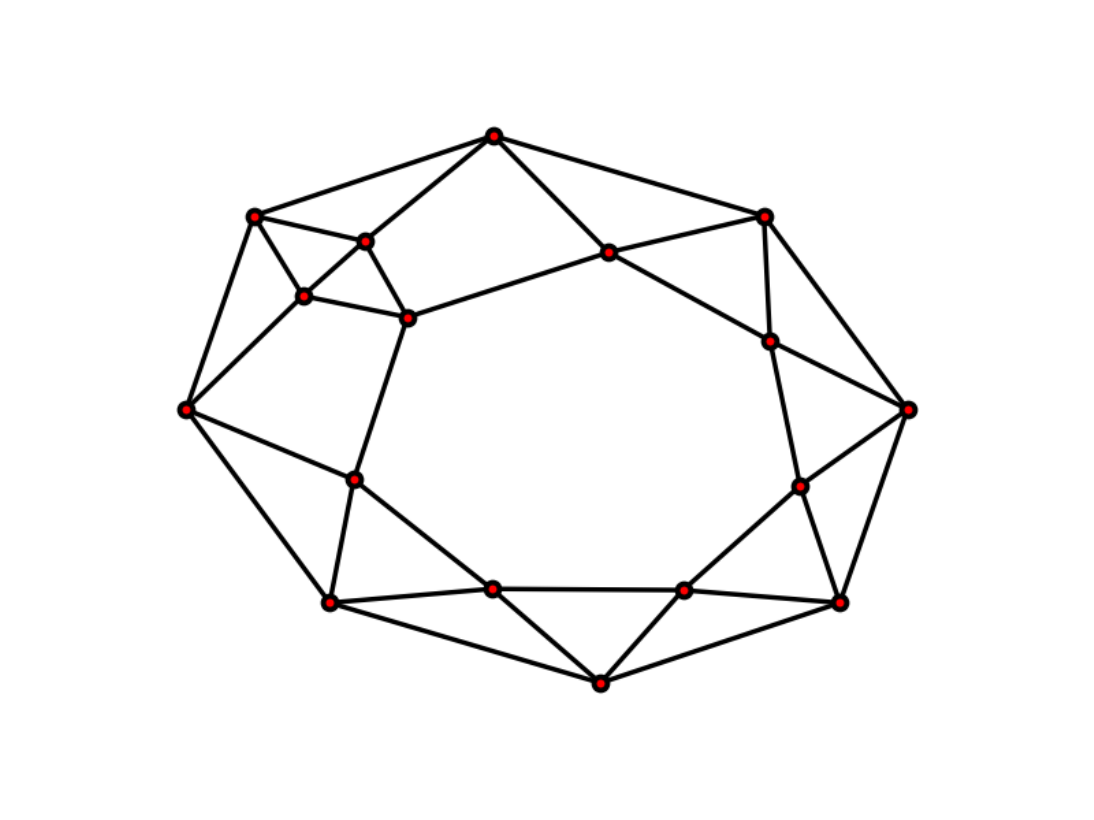} & $P_{19}^{\max}$ & \includegraphics[width=0.4\textwidth]{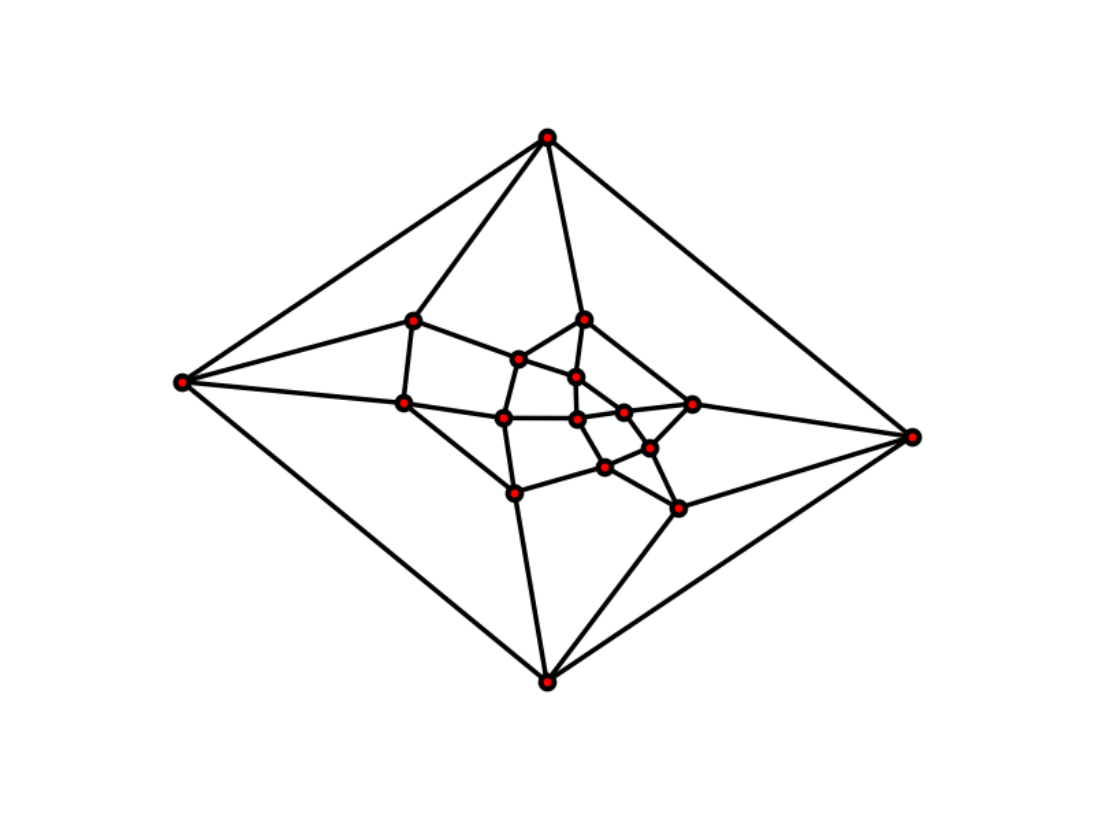} \\ 
$P_{20}^{\min}$ & \includegraphics[width=0.4\textwidth]{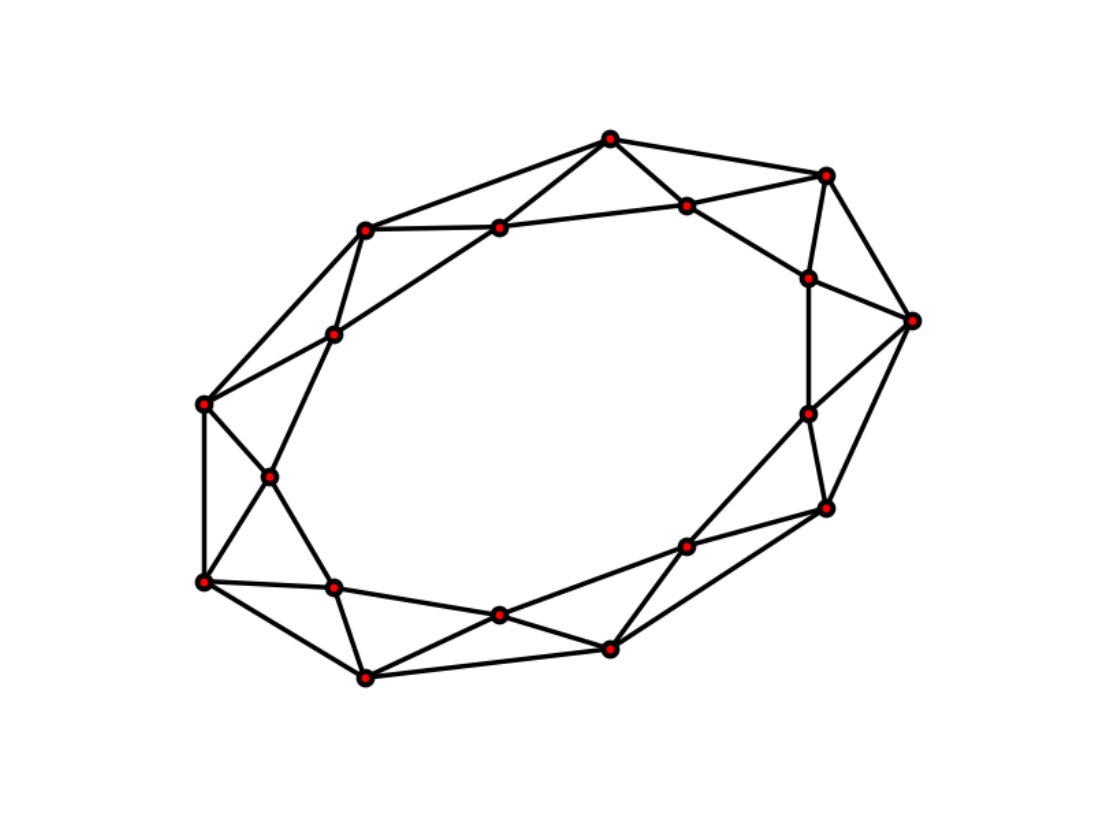} & $P_{20}^{\max}$ & \includegraphics[width=0.4\textwidth]{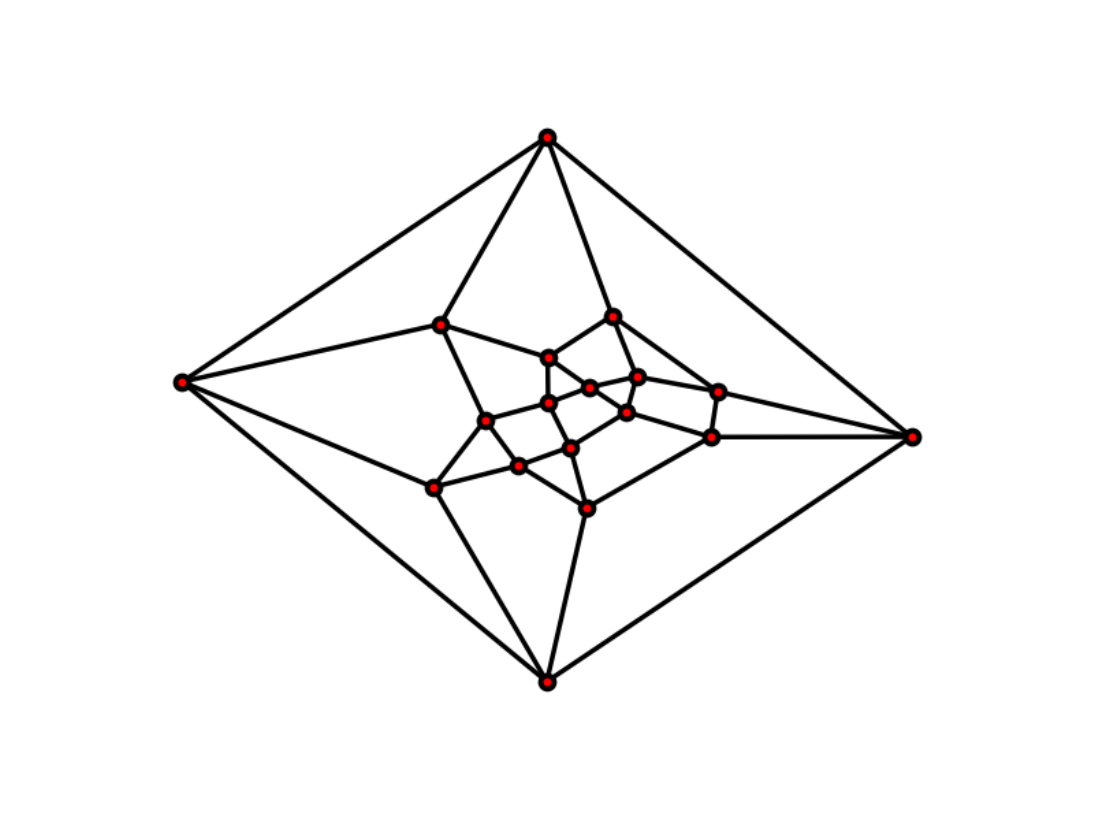} \\ 
$P_{21}^{\min}$ &  \includegraphics[width=0.4\textwidth]{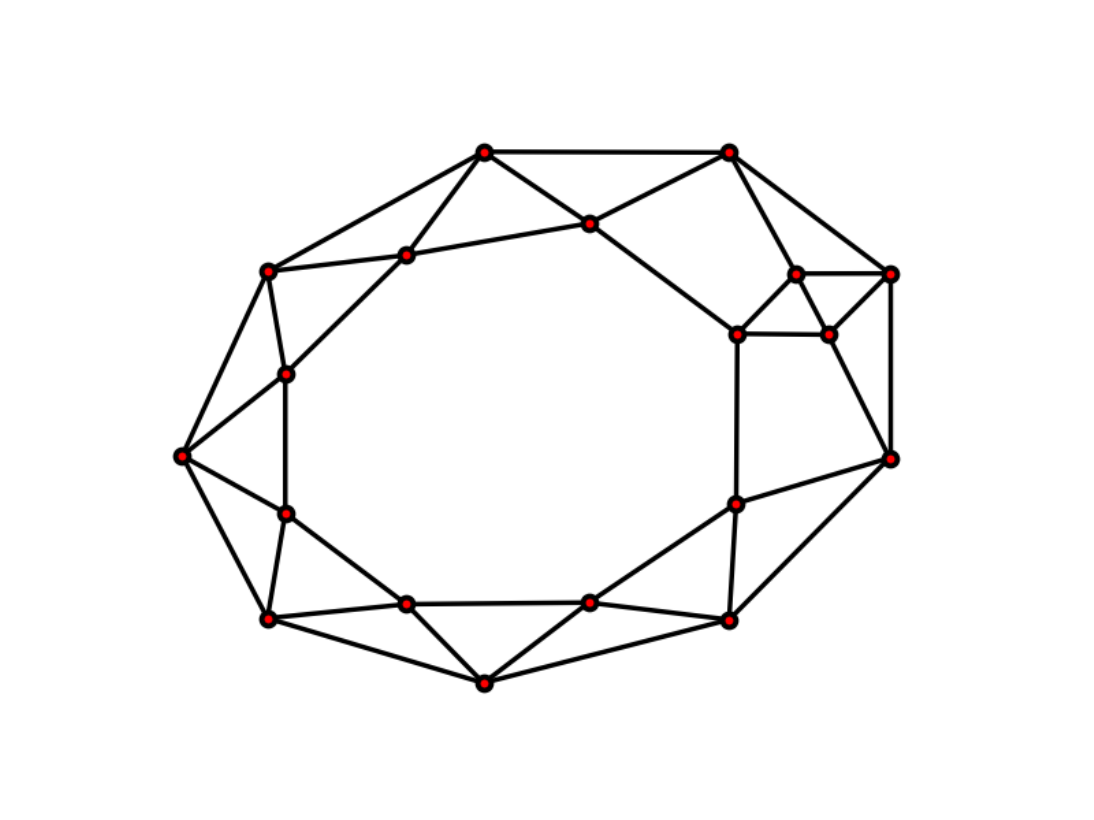} &  $P_{21}^{\max}$ & \includegraphics[width=0.4\textwidth]{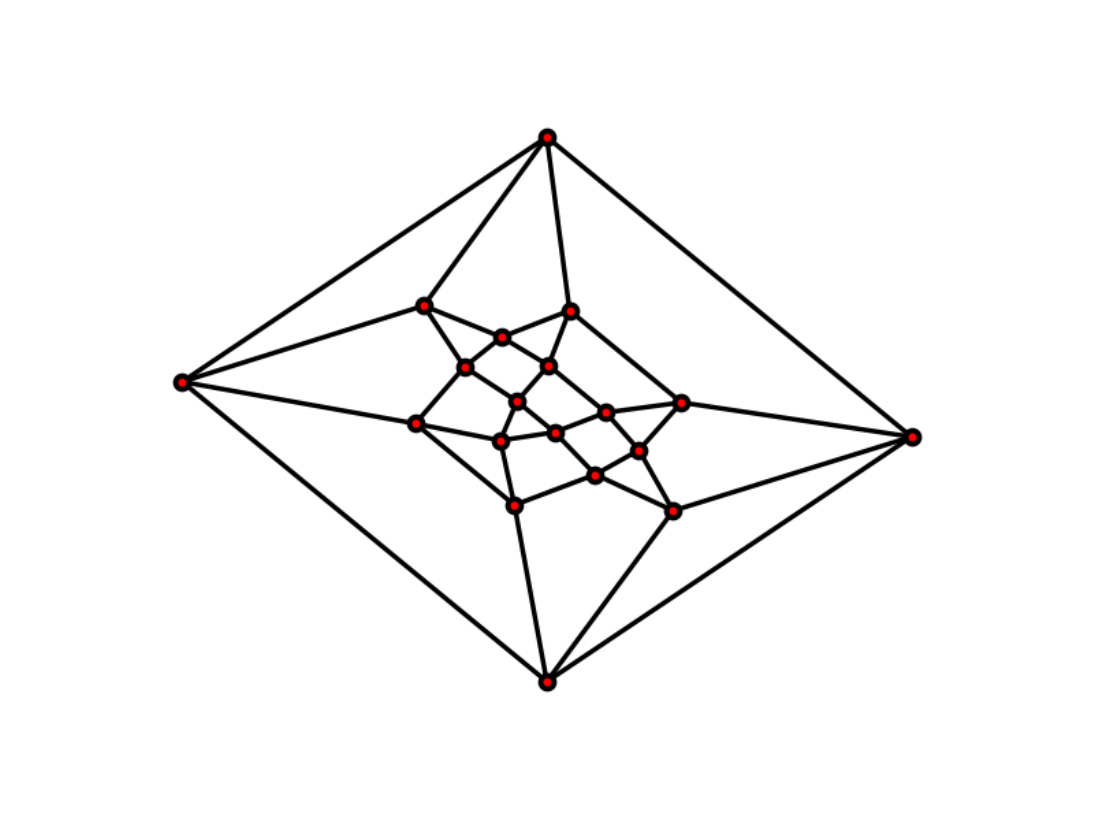} \\ 
$P_{22}^{\min}$ &  \includegraphics[width=0.4\textwidth]{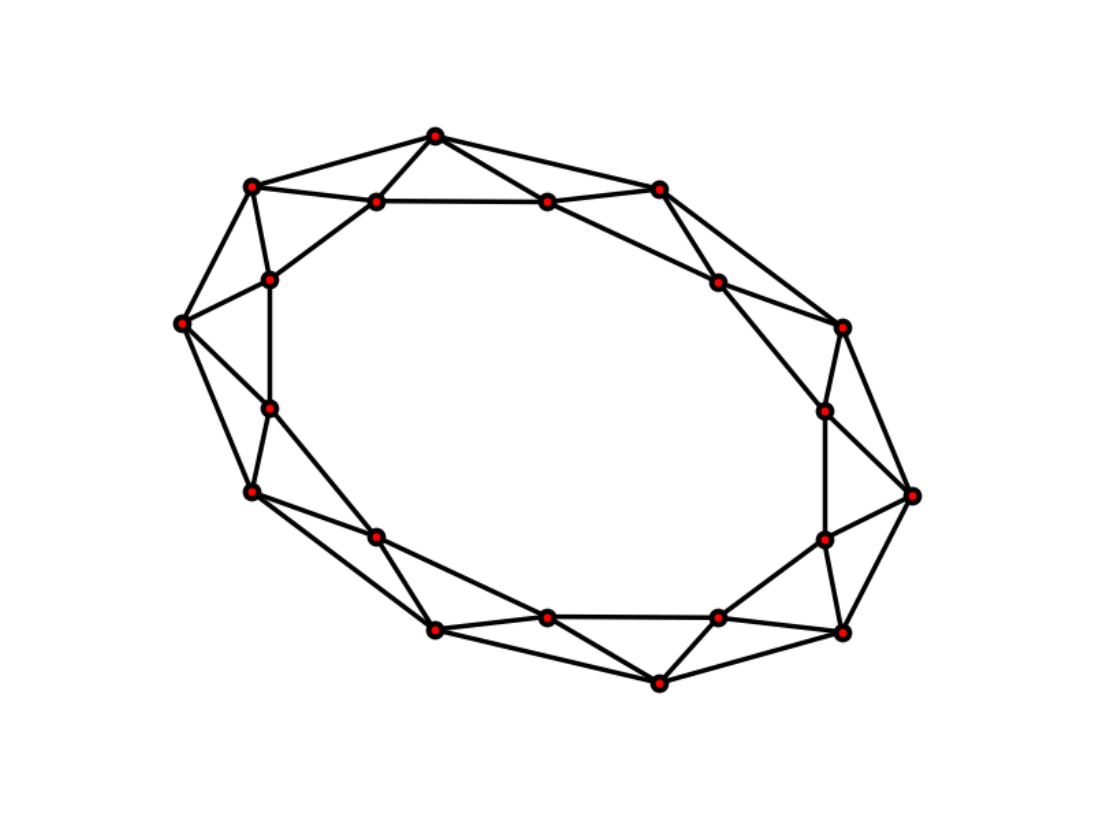} & $P_{22}^{\max}$ & \includegraphics[width=0.4\textwidth]{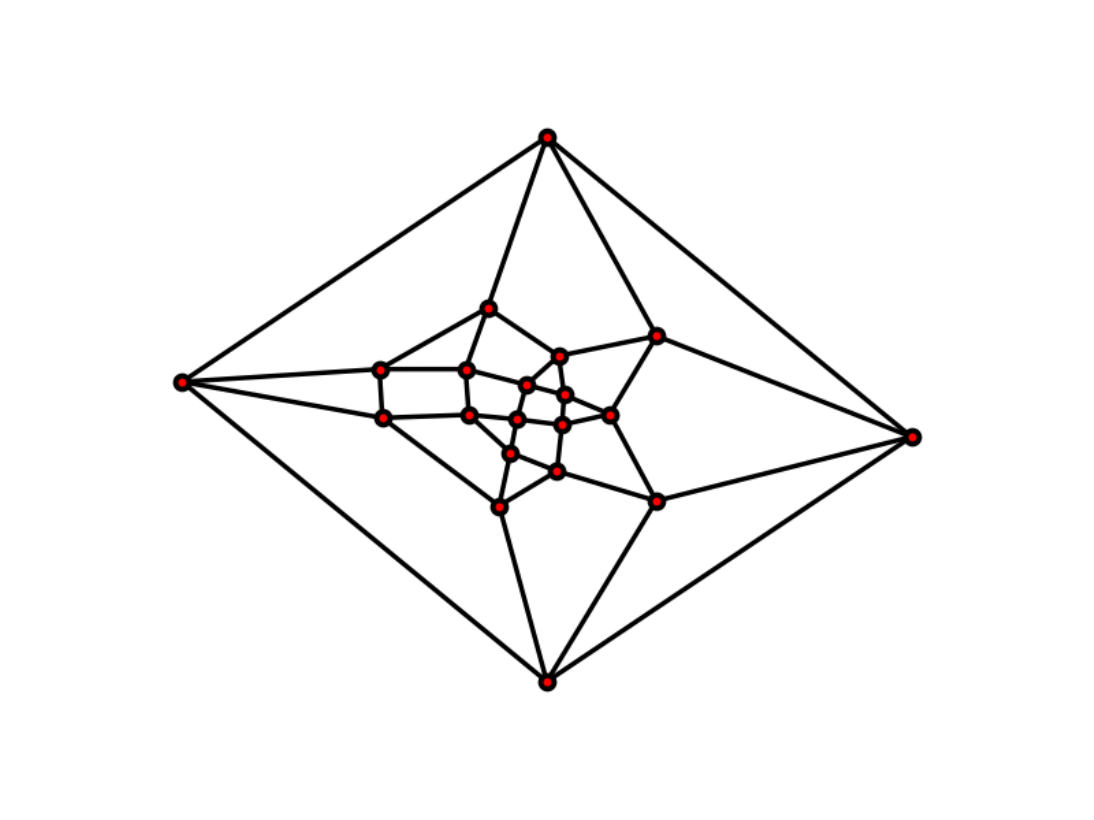} \\ 
$P_{23}^{\min}$ & \includegraphics[width=0.4\textwidth]{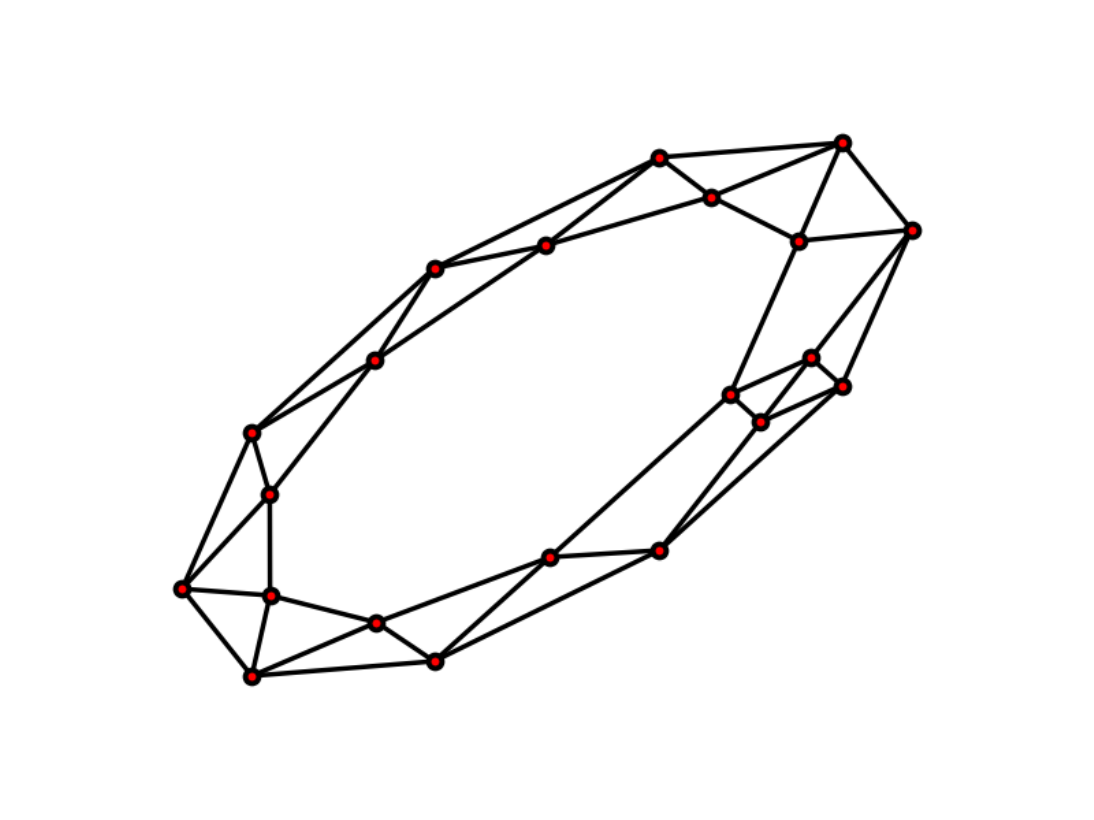} & $P_{23}^{\max}$ & \includegraphics[width=0.4\textwidth]{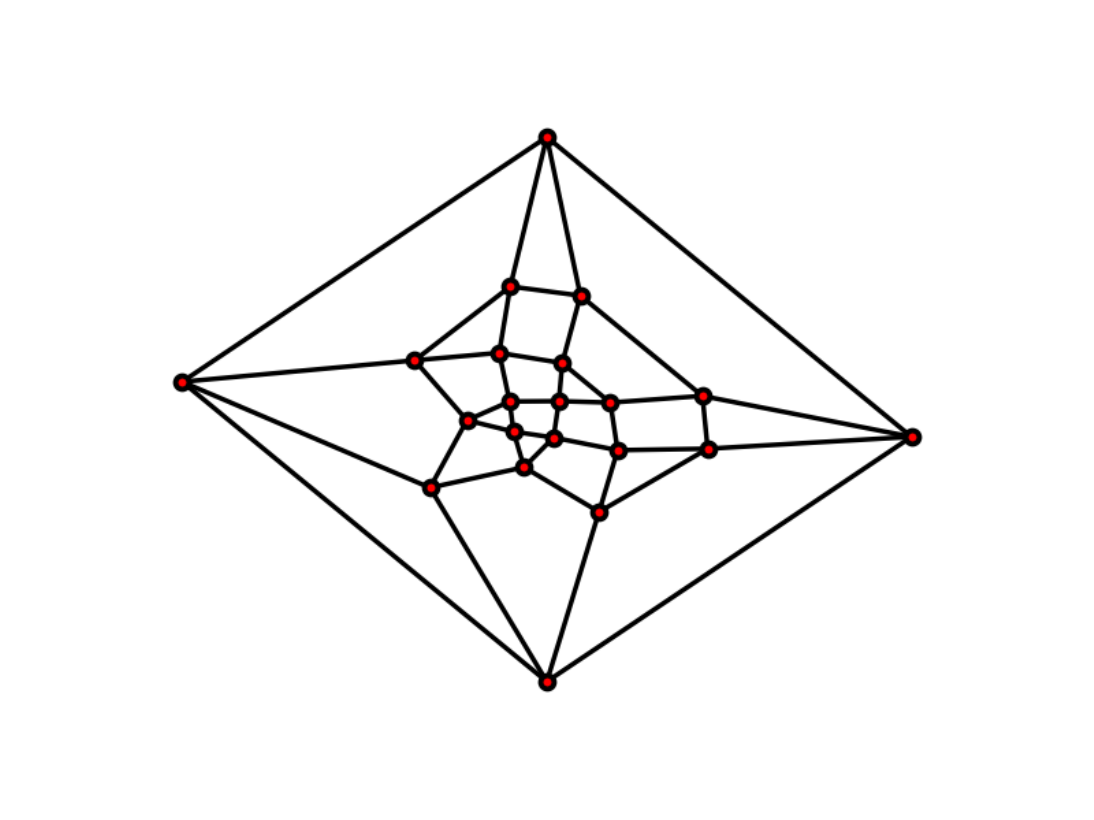} 
\end{tabular}
\end{table} 

\section{Upper and lower volume bounds} \label{sec3} 

Bilateral bounds for the volumes of ideal right-angled polyhedra in terms of the number of vertices were obtained by K.~Atkinson in~\cite{A09}.

\begin{theorem} \cite{A09}  \label{theoremAtkinson}
Let $P$ be an ideal right-angled polyhedron with $N$ vertices, then
$$
(N-2) \cdot \frac{v_8}{4} \leqslant \textrm{\rm vol} (P) \leqslant (N-4) \cdot \frac{v_8}{2}. 
$$
Both inequalities became equalities when $P$ is the regular ideal hyperbolic octahedron. Moreover, there exists a sequence of ideal right-angled polyhedra  $P_i$ with $N_i$ vertices such that  $\textrm{\rm vol} (P_i) / N_i$ tends to $v_8 / 2$ as $i \to \infty$.
\end{theorem}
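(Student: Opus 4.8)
The plan is to prove both inequalities simultaneously by induction on the number of vertices $N$, using the generation result of Theorem~\ref{theorem2.3}: every member of $\mathcal{IR}$ is either an antiprism or is obtained from a polyhedron of $\mathcal{IR}$ with one fewer vertex by a single edge-twist move. Since an edge-twist deletes two edges and creates one new $4$-valent vertex, Euler's formula shows it sends $(N,\operatorname{vol}) \mapsto (N+1,\operatorname{vol}+\Delta)$ for some $\Delta>0$ while preserving $F=N+2$. First I would record the building blocks: the ideal right-angled octahedron splits into four ideal tetrahedra with angles $(\pi/2,\pi/4,\pi/4)$, each of volume $v_8/4 = 2\Lambda(\pi/4)$, so that $v_8 = 8\Lambda(\pi/4)$, and I would note that the boundary sphere of any $P\in\mathcal{IR}$ triangulates into $2N-4$ ideal triangles.

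For the base of the induction I would verify both bounds on the antiprisms $A(n)$ directly from Theorem~\ref{theorem2.4}. Since $\Lambda$ is concave on $(0,\pi/2)$ (indeed $\Lambda''(\theta)=-\cot\theta<0$ there), concavity gives $\Lambda(\tfrac{\pi}{4}+x)+\Lambda(\tfrac{\pi}{4}-x)\le 2\Lambda(\tfrac{\pi}{4})$, whence $\operatorname{vol}(A(n))\le \tfrac{n}{2}v_8\le (n-2)v_8=(N-4)\tfrac{v_8}{2}$ for $n\ge 4$; the octahedron $A(3)$ is handled separately and realizes equality. The matching lower bound for $A(n)$ reduces, after the same substitution $N=2n$, to the elementary estimate $\Lambda(\tfrac{\pi}{4}+\tfrac{\pi}{2n})+\Lambda(\tfrac{\pi}{4}-\tfrac{\pi}{2n})\ge (1-\tfrac1n)\,2\Lambda(\tfrac{\pi}{4})$, which one checks from the Taylor expansion of $\Lambda$ at $\pi/4$ for large $n$ and by direct evaluation for small $n$, again with equality exactly at $A(3)$.

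The inductive step, and the main obstacle, is the uniform two-sided estimate
$$
\frac{v_8}{4} \;\le\; \Delta \;\le\; \frac{v_8}{2}
$$
for the volume jump $\Delta$ produced by one edge-twist. Granting it, the induction closes: if $\operatorname{vol}(Q)\ge (N-2)\tfrac{v_8}{4}$ then $\operatorname{vol}(P)=\operatorname{vol}(Q)+\Delta\ge (N-1)\tfrac{v_8}{4}$, and symmetrically $\operatorname{vol}(P)=\operatorname{vol}(Q)+\Delta\le (N-3)\tfrac{v_8}{2}$, which are precisely the bounds for $P$. The difficulty is that an edge-twist is only a local combinatorial move, whereas the complete hyperbolic structure readjusts globally, so $\Delta$ is not visibly local; I would try to control it through Milnor's formula applied to ideal-tetrahedron triangulations of $Q$ and $P$ that agree away from the twist, combined with the Schl\"afli variation formula and the concavity of $\Lambda$ to bound the change monotonically. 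A naive direct route, coning $P$ from one ideal vertex into at most $2N-8$ ideal tetrahedra, does not give the sharp constant, since a single ideal tetrahedron can have volume up to $3\Lambda(\pi/3)>v_8/4$; thus right-angledness must be exploited, which is exactly what makes the estimate delicate.

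Equality in both inequalities at the octahedron ($N=6$) is covered by the base-case computation above. For the final assertion, the antiprisms yield only $\operatorname{vol}(A(n))/N\to v_8/4$, so realizing the upper rate $v_8/2$ requires a genuinely fatter family $P_i$ whose volumes approach $(N_i-4)\tfrac{v_8}{2}$; constructing such a family and verifying that it remains ideal and right-angled is the substance of this step, after which $\operatorname{vol}(P_i)/N_i\to v_8/2$, while the upper bound forces $\operatorname{vol}(P)/N<v_8/2$ for every $P$, showing the rate is best possible.
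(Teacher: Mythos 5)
Your proposal cannot be checked against an argument in the paper itself: Theorem~\ref{theoremAtkinson} is imported verbatim from Atkinson \cite{A09}, and the paper gives no proof of it. Measured against what a proof must contain, your induction-on-edge-twists scheme has a fatal gap exactly where you flag the ``main obstacle'': the uniform per-move estimate $v_8/4 \le \Delta \le v_8/2$ for the volume jump of a single edge-twist is never proved, and it is a \emph{strictly stronger} statement than the theorem itself, so it cannot be borrowed from anywhere. (Applying the two bounds of the theorem to a polyhedron $Q$ with $N$ vertices and its twist $P$ with $N+1$ vertices only confines $\Delta$ to an interval whose length grows linearly in $N$; even $\Delta>0$ is not obvious.) The tools you propose do not plausibly close this gap: an edge-twist changes the combinatorics while every dihedral angle stays $\pi/2$, so the Schl\"afli variation formula --- which controls volume under \emph{angle} variation at \emph{fixed} combinatorics --- has no purchase; and by rigidity the complete hyperbolic structures on $Q$ and $P$ are each determined globally by their combinatorics, so there is no reason ideal triangulations of $Q$ and $P$ can be chosen to ``agree away from the twist''. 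The only case where the jump is actually computable with the paper's tools is the twist of a pair of edges adjacent through an edge, where Lemma~\ref{lemma2.1} gives $\Delta = v_8 - \bigl(\operatorname{vol}(A(n)) - \operatorname{vol}(A(n-1))\bigr)$; this lies in your interval, but it covers only one very special move.

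There is a second, independent gap: the asymptotic sharpness assertion. You correctly note that antiprisms only give $\operatorname{vol}(A(n))/N \to v_8/4$ and that a different family is needed to realize the rate $v_8/2$, but you then declare that constructing it ``is the substance of this step'' without doing so --- i.e., that part of the theorem is simply not proved. By contrast, your base case is essentially sound: concavity of $\Lambda$ on $(0,\pi/2)$ does give $\operatorname{vol}(A(n)) \le \tfrac{n}{2}v_8 \le (N-4)\tfrac{v_8}{2}$ for $n \ge 4$ via Theorem~\ref{theorem2.4}, and the lower bound for antiprisms reduces, as you say, to the explicit inequality $\Lambda(\tfrac{\pi}{4}+\tfrac{\pi}{2n}) + \Lambda(\tfrac{\pi}{4}-\tfrac{\pi}{2n}) \ge (1-\tfrac1n)\,2\Lambda(\tfrac{\pi}{4})$, with equality at $n=3$, which a careful Taylor/monotonicity argument can verify. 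But with the inductive step resting on an unproven (and possibly unprovable-by-these-means) local volume lemma, and the extremal family missing, the proposal establishes neither inequality for general $P \in \mathcal{IR}$ nor the sharpness statement; Atkinson's actual proof in \cite{A09} does not pass through the twist-generation Theorem~\ref{theorem2.3} at all, but through volume estimates for decompositions and cusped/boundary geometry, which avoids any per-move monotonicity claim.
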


Figure~\ref {fig7} shows the graphs of the upper and lower bounds from Theorem~\ref {theoremAtkinson}, the set of volume values of ideal right-angled polyhedra with at most $23$ faces, where volumes of antiprisms and twisted antiprisms are separately highlighted.

\begin{figure}[ht]
\centering
\includegraphics[width=1.0\textwidth]{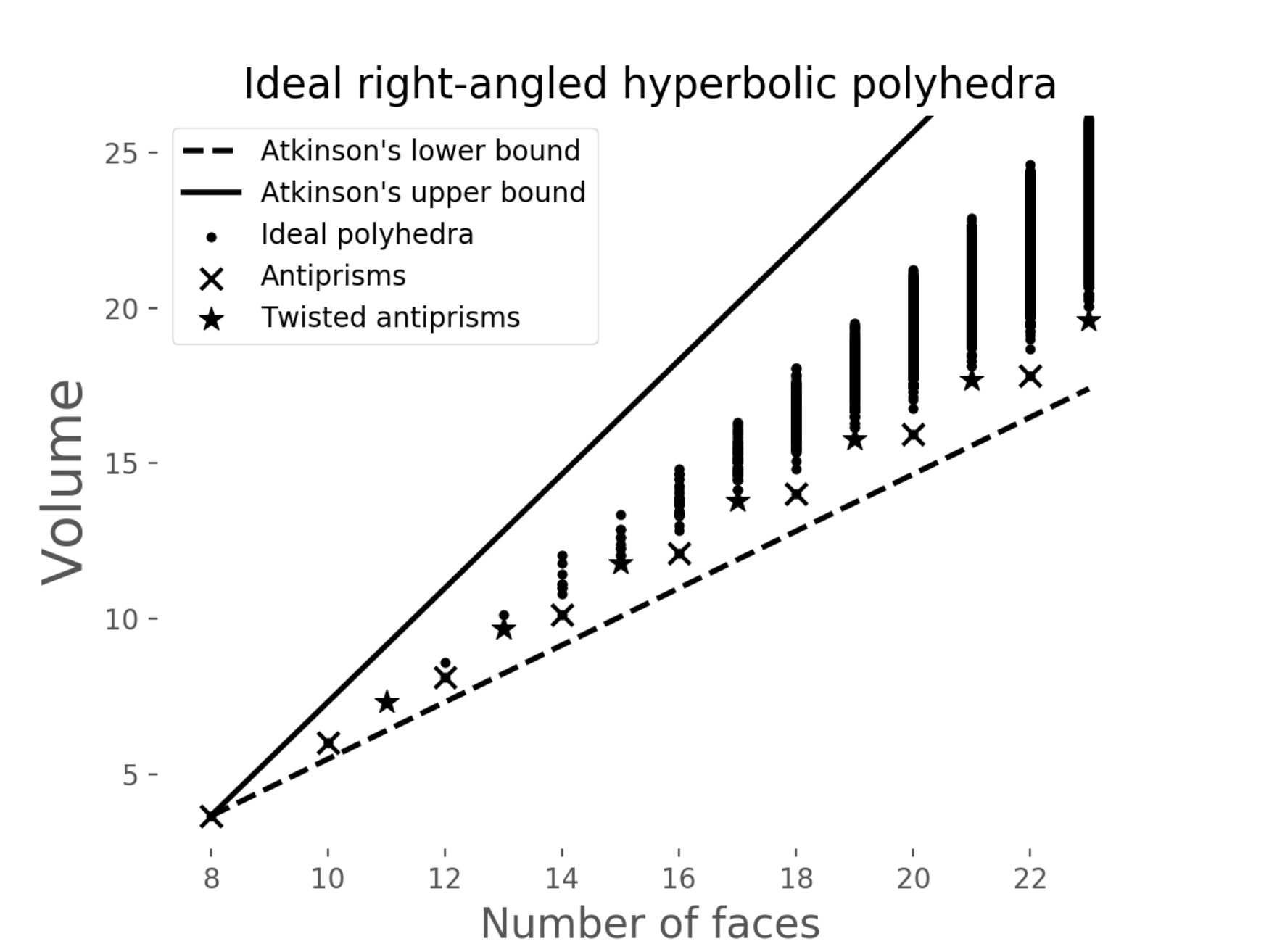}
\caption{The set of volumes and Atkinson bounds.} \label{fig7}
\end{figure}

The upper bound in Theorem~\ref {theoremAtkinson} can be improved as follows.

\begin{theorem} \label{theorem3.2} 
Let $P$ be an ideal right-angled hyperbolic polyhedron with $N$ vertices, different from the octahedron. Let $F_{1}$ and $F_ {2}$ be two faces of $P$ such that $F_ {1}$ is $n_ {1}$-gon, and $F_ {2}$ is $n_{2}$-gon, where $n_{1}, n_{2} \geq 4$. Then for its volume the following upper bound holds:
$$
\operatorname{vol} (P) \leq \left( N - \frac{n_{1}}{2} - \frac{n_{2}}{2} \right) \cdot \frac{v_{8}}{2}. 
$$
\end{theorem}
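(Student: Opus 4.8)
The plan is to recast the inequality so that the Lobachevsky function does the work, and then to reduce to Atkinson's bound by shrinking the two distinguished faces one side at a time. Since $2E=4V=4N$ and $F=V+2$, the claim is equivalent to $\operatorname{vol}(P)\le (2N-n_1-n_2)\,\tfrac{v_8}{4}$, where $2N-n_1-n_2=E-n_1-n_2$ counts the edges of $P$ not lying on $F_1$ or $F_2$. The analytic input I would isolate first is a concavity estimate: since $\Lambda''(\theta)=-\cot\theta<0$ on $(0,\pi/2)$, the function $\Lambda$ is concave there, so for every $\beta\in(0,\pi/2)$ the ideal tetrahedron $T(\pi/2,\beta,\pi/2-\beta)$ has volume $\Lambda(\beta)+\Lambda(\pi/2-\beta)\le 2\Lambda(\pi/4)=\tfrac{v_8}{4}$, with equality only at $\beta=\pi/4$. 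I will call such a tetrahedron \emph{right-type}; the whole proof amounts to writing $\operatorname{vol}(P)$ as a sum of at most $2N-n_1-n_2$ right-type tetrahedra.

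Next I would settle the extremal family directly. For the antiprism, Theorem~\ref{theorem2.4} together with the concavity estimate gives $\operatorname{vol}(A(n))=2n\big[\Lambda(\tfrac{\pi}{4}+\tfrac{\pi}{2n})+\Lambda(\tfrac{\pi}{4}-\tfrac{\pi}{2n})\big]\le 2n\cdot\tfrac{v_8}{4}=n\,\tfrac{v_8}{2}$, which is exactly the asserted bound when $F_1,F_2$ are the two $n$-gonal faces ($N=2n$, $n_1=n_2=n$); in fact Thurston's drum decomposition realizes $A(n)$ as precisely $2n=2N-n_1-n_2$ right-type tetrahedra, consistent with one for each of the $2n$ edges off the two $n$-gons, so the constant cannot be improved. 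The twisted antiprism is handled identically from Theorem~\ref{theorem2.5}. These cases fix the target constant and exhibit the decomposition I want to imitate.

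For a general $P$ I would induct, reducing the two faces to quadrilaterals so as to land on Atkinson's theorem. The key step I would try to establish is a local move: if $F_1$ is a $k$-gon with $k\ge 5$, produce an ideal right-angled polyhedron $P'$ with one fewer vertex in which $F_1$ is replaced by a $(k-1)$-gon and $F_2$ is untouched, with $\operatorname{vol}(P)-\operatorname{vol}(P')\le \tfrac{v_8}{4}$. Realizability of $P'$ would be checked against Andreev's or Rivin's conditions (Theorems~\ref{theoremAndreev} and~\ref{theoremRivin}), and the move would be read off an inverse edge-twist supplied by the generation result Theorem~\ref{theorem2.3}. Granting this, the bookkeeping closes: passing from $P$ to $P'$ lowers the right-hand side of the target by exactly $(N-\tfrac{k}{2}-\tfrac{n_2}{2})\tfrac{v_8}{2}-(N-1-\tfrac{k-1}{2}-\tfrac{n_2}{2})\tfrac{v_8}{2}=\tfrac{v_8}{4}$, which matches the volume drop, so the inequality is inherited. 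Iterating drives $n_1,n_2$ down to $4$, where the target reads $(N-4)\tfrac{v_8}{2}$ and is precisely Atkinson's upper bound (Theorem~\ref{theoremAtkinson}).

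The hard part is the local move and its volume cost. Combinatorially one must check that a large face always admits a side-reducing inverse edge-twist that leaves the second face intact and keeps every vertex four-valent; geometrically one must bound the resulting change in volume by $\tfrac{v_8}{4}$. The latter is the genuine difficulty, because an edge-twist is only combinatorially local: the whole polyhedron re-adjusts to stay ideal and right-angled, so the volume change is not a priori confined to one tetrahedron. The natural way to control it is to arrange the move to peel off a single right-type tetrahedron from a decomposition of $P$ of the kind seen for antiprisms, so that the cost is literally the volume of that tetrahedron and the concavity estimate caps it at $\tfrac{v_8}{4}$; making this decomposition exist for an arbitrary $P$ (equivalently, refining Atkinson's decomposition so that the two large faces are charged fewer right-type tetrahedra than in the generic count $2N-8$) is where the real work lies.
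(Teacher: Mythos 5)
Your reduction to Atkinson's bound and your direct verification for antiprisms are fine, but the entire proof hangs on the local move you yourself flag as ``where the real work lies,'' and that move is not merely unproven --- it is obstructed. Kolpakov's Theorem~\ref{theorem2.1} says that an ideal right-angled polyhedron with an $n$-gonal face has at least $2n+2$ faces, hence (by $F=V+2$) at least $2n$ vertices. So whenever $P$ has $N=2n$ vertices and $F_2$ is an $n$-gon, your move --- which removes a vertex while leaving $F_2$ untouched --- would produce an ideal right-angled polyhedron with $2n-1$ vertices containing an $n$-gonal face, which cannot exist. You sidestep antiprisms by treating them directly, but your induction has no mechanism to guarantee it never reaches other polyhedra at or near this Kolpakov minimum, where no side-reducing move preserving $F_2$ is available. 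Moreover, the move you want is not supplied by Theorem~\ref{theorem2.3}: as Figure~\ref{fig300} shows, the edge-twist shrinks a face (hexagon to pentagon) while \emph{adding} a vertex, and its inverse removes a vertex while \emph{growing} a face back; a move that simultaneously removes a vertex and shrinks $F_1$ is neither an edge-twist nor an inverse edge-twist, and no combinatorial mechanism producing it is exhibited. The volume estimate $\operatorname{vol}(P)-\operatorname{vol}(P')\le \tfrac{v_8}{4}$ is equally unsupported and numerically implausible: the one move whose volume effect the paper computes (Lemma~\ref{lemma2.1}) changes volume by $v_8-\bigl(\operatorname{vol}(A(n))-\operatorname{vol}(A(n-1))\bigr)$, which is about $1.30$ at $n=4$ and tends to $\tfrac{v_8}{2}\approx 1.83$, both well above $\tfrac{v_8}{4}\approx 0.916$; since by Theorem~\ref{theoremAtkinson} each vertex carries between $\tfrac{v_8}{4}$ and $\tfrac{v_8}{2}$ of volume, demanding the extreme value $\tfrac{v_8}{4}$ at every single step is a pointwise claim that there is no reason to expect, even though it holds in the averaged sense the theorem asserts.

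The paper's actual proof makes exactly this averaging precise and avoids local surgery entirely, and you could adopt it to close the gap: if $F_1$ and $F_2$ are not adjacent, glue $2k+1$ copies of $P$ alternately along faces isometric to $F_1$ and $F_2$ (the dihedral angles being $\pi/2$, the doubled faces flatten to angle $\pi$ and disappear), obtaining an ideal right-angled polyhedron $P_{2k+1}$ with $(2k+1)N-kn_1-kn_2$ vertices and volume $(2k+1)\operatorname{vol}(P)$; applying Atkinson's upper bound to $P_{2k+1}$, dividing by $2k+1$, and letting $k\to\infty$ kills the additive $-4$ and yields precisely $\operatorname{vol}(P)\le\bigl(N-\tfrac{n_1}{2}-\tfrac{n_2}{2}\bigr)\tfrac{v_8}{2}$. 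The adjacent case is reduced to the non-adjacent one by a single preliminary doubling along $F_1$, which produces a $(2n_2-2)$-gon and a second face with at least four sides that are no longer adjacent. In short: your concavity observation and the antiprism computation are correct but only confirm sharpness of the constant; the inductive engine is missing, and the doubling argument is the standard way to supply it.
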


\begin{proof} As follows from the Euler formula for polyhedra, if an ideal right-angled polyhedron $P$ is different from an octahedron, then it would have two faces that have at least four sides. For definiteness, we denote these faces by $F_{1}$ and $F_ {2}$. We consider two cases according to whether the faces $F_ {1}$, $F_ {2}$ are adjacent or not.

(1) Let the faces $F_ {1}$ and $F_ {2}$ be not adjacent. We construct a family of right-angled polyhedra by induction, attaching at each step a copy of the polyhedron $P$. Put $P_ {1} = P$. Define $P_{2}=P_ {1} \cup_{F_{1}}P_{1}$, identifying two copies of the polyhedron $P_ {1}$ along the face $F_ {1}$. Obviously, $P_ {2}$ is an ideal right-angled polyhedron with the number of vertices $N_ {2} = 2N-n_ {1}$ and the volume  $\operatorname{vol} (P_{2}) = 2 \operatorname{vol} (P)$. The polyhedron $P_{2}$ has at least one face isometric to $F_{2}$. We attache the polyhedron $P$ to the polyhedron $P_{2}$ along this face. We get $P_{3} = P_{2} \cup_{F_{2}} P = P \cup_{F_{1}} P \cup_{F_{2}} P$. Obviously, $P_{3}$ is an ideal right-angled polyhedron with the number of vertices $N_{3} = 3N - n_{1} - n_{2}$ and the volume $\operatorname{vol} (P_{3}) = 3 \operatorname{vol} (P)$. Continuing the process of adding the polyhedron $P$ alternately through the faces isometric to $F_{1}$ and $F_{2}$, we obtain the polyhedron $P_{2k+1} = P_{2k-1} \cup_{F_{1}} P \cup_{F_{2}} P$, which is an ideal right-angled polyhedron with $N_{2k+1} = (2k+1) N - k, n_{1} - k n_{2}$ vertices and of volume $\operatorname{vol} (P_{2k+1}) = (2k+1) \operatorname{vol} (P)$.
Now let us  apply the upper bound from Theorem~\ref{theoremAtkinson} to polyhedron $P_{2k+1}$:
$$
(2k+1) \operatorname{vol} (P) \leq \left( (2k+1) N - k n_{1} - k n_{2} - 4 \right) \frac{v_{8}}{2}.  
$$
Dividing both sides of the inequality by $(2k+1)$ and passing to the limit as $k \to \infty $, we obtain the required inequality.

(2) Let the faces $F_{1}$ and $F_{2}$ be adjacent. Put $P_{2}=P \cup_{F_{1}} P$. The constructed polyhedron $P_{2}$ has $N_{2}=2N-n_{1}$ vertices and its volume is two times the volume of the polyhedron $P$. By construction, the polyhedron $P_{2}$ has a face $F_{21}$, which is a $(2n_{2}-2)$-gon. Since the face of $F_1$ has at least 4 edges, there is a face in $P$ adjacent to $F_{1}$, but not adjacent to $F_{2}$. As a result of attaching $P$ along $F_{1}$, this face will turn into a face $F_{22}$ in a polyhedron $P_{2}$ that has at least $4$ sides.  Thus, in $P_{2}$ there is a pair of non-adjacent faces $F_{21}$ and $F_{22} $, each of which has at least $4$ sides. This situation corresponds to the already proved case (1). Thus, for the polyhedron $P_{2}$ and its non-adjacent faces $F_{11}$ and $F_{12}$ we get:
$$
2 \operatorname{vol} (P) \leq \left( N_{2} - \frac{(2n_{2} -2)}{2} - \frac{4}{2} \right) \frac{v_{8}}{2}, 
$$
from where we receive
$$
2 \operatorname{vol} (P) \leq \left( 2N-n_{1}-n_{2}  - 1\right) \frac{v_{8}}{2}. 
$$
and therefore, 
$$
\operatorname{vol} (P) < \left( N-\frac{n_{1}}{2}-\frac{n_{2}}{2} \right) \frac{v_{8}}{2}. 
$$
\end{proof} 

\begin{theorem}  \label{theorem3.3} 
Let $P$ be an ideal right-angled hyperbolic polyhedron with $N \geq 17$ faces and having only triangular or quadrilateral faces. Then for its volume the following upper bound holds:
$$
\operatorname{vol} (P) < (N-5) \frac{v_{8}}{2}
$$ 
\end{theorem}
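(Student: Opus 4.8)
The strategy is to reduce to Theorem~\ref{theorem3.2} by first doubling $P$ across a carefully chosen quadrilateral face, so as to manufacture two faces with at least six sides. Since every face of $P$ is a triangle or a quadrilateral, relation~\eqref{formulamain} degenerates to $p_{3}=8$: thus $P$ has exactly eight triangles and $p_{4}=N-8$ quadrilaterals, and in particular the hypothesis $N\geq 17$ is precisely the condition $p_{4}\geq 9$. By Euler's relation the number of vertices is $V=N-2$.

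The combinatorial heart of the argument is the claim that, once $p_{4}\geq 9$, some quadrilateral face is adjacent to at least two \emph{other} quadrilateral faces. To prove it I would classify each edge of $P$ as of type $qq$, $qt$ or $tt$ according to the two faces it separates, writing $e_{qq},e_{qt},e_{tt}$ for the respective counts. Counting the edges lying on triangles gives $2e_{tt}+e_{qt}=3p_{3}=24$, whence $e_{qt}\leq 24$. If, on the contrary, every quadrilateral met at most one other quadrilateral, then each quadrilateral would have at least three of its four edges of type $qt$ (a quadrilateral edge is either $qq$ or $qt$), so that $e_{qt}\geq 3p_{4}\geq 27$, a contradiction. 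Hence there is a quadrilateral $F$ two of whose edges are shared with quadrilaterals; since in a $3$-connected polyhedron two faces meet in at most one edge, these are two \emph{distinct} quadrilateral neighbours $Q_{1},Q_{2}$ of $F$.

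I would then double $P$ across $F$, that is, set $P'=P\cup_{F}P$ exactly as in the proof of Theorem~\ref{theorem3.2}. Reflection in the totally geodesic face $F$ keeps all remaining dihedral angles equal to $\pi/2$ and flattens the four edges of $F$ to angle $\pi$, so $P'$ is again an ideal right-angled polyhedron with $V'=2V-4$ vertices and $\operatorname{vol}(P')=2\operatorname{vol}(P)$. Each neighbour of $F$ merges with its mirror image into a face with $2k-2$ sides if it originally had $k$; thus $Q_{1}$ and $Q_{2}$ become two hexagonal faces of $P'$, and they stay distinct because the only flattened edges lie on $F$. Applying Theorem~\ref{theorem3.2} to $P'$ with these two hexagons ($n_{1}=n_{2}=6$) yields
$$
2\operatorname{vol}(P)=\operatorname{vol}(P')\leq\Bigl(V'-\tfrac{6}{2}-\tfrac{6}{2}\Bigr)\frac{v_{8}}{2}=(2V-10)\frac{v_{8}}{2},
$$
so that $\operatorname{vol}(P)\leq (V-5)\tfrac{v_{8}}{2}=(N-7)\tfrac{v_{8}}{2}<(N-5)\tfrac{v_{8}}{2}$; the strict inequality of the statement is then automatic, with room to spare. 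The step I expect to require the most care is the combinatorial claim above together with the check that the doubling genuinely produces two \emph{distinct} faces with at least four sides that are eligible for Theorem~\ref{theorem3.2}; once that is secured, the volume bookkeeping and the verification that $P'$ remains ideal and right-angled are routine.
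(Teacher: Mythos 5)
Your proposal is correct and follows essentially the same route as the paper: a counting argument based on the $24$ sides of the eight triangles (forced by $p_{3}=8$) produces a quadrilateral adjacent to two distinct quadrilaterals, and doubling across that middle quadrilateral creates two hexagonal faces to which Theorem~\ref{theorem3.2} is applied. In fact your vertex bookkeeping is more careful than the paper's, which writes $2N-4$ for the number of vertices of the double although $N$ counts faces (the correct count is $2(N-2)-4=2N-8$), so your version yields the slightly stronger bound $\operatorname{vol}(P)\leq (N-7)\tfrac{v_{8}}{2}$, from which the stated strict inequality follows at once.
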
 

\begin{proof}
Observe  that in the polyhedron $P$ there are three quadri\-la\-teral faces $F_{1}$, $F_{2}$, $F_{3}$ such that $F_{2}$ is adjacent to $F_{1}$ and $F_{3}$ both. In fact, assume that there is no such triple of faces. Then each quadrilateral face is adjacent to at most one quadrangular face. If a quadrilateral face has no adjacent quadrilateral (we will say that it is isolated), then through its four sides it is adjacent to the triangular faces. If two quadrilaterals are adjacent to each other and none of them is adjacent to another quadrilateral (we will say that the faces form a pair), then their union is adjacent through six sides with triangular faces. Hence, if there are $k_{1}$ isolated quadrilateral  faces and $k_{2}$ pairs of quadrilateral faces, then through their sides they are adjacent to triangular faces through $4 n_{1} + 6 n_{2}$ sides. Since the polyhedron does not contain $n$-gonal faces for $n \geq 5$, it follows from Euler's formula that the number of triangles is $8$. Their total number of sides is $24$. If the number of faces $N \geq 17$ and eight of them are triangles, then $n_{1} + 2 n_{2} \geq 9$ and $S = 4 n_{1} + 6 n_{2}$ sides of triangles are required.  Using the fact that $2n_{2} \geq 9 - n_{1}$, we obtain $S \geq 4 n_{1} + 3(9-n_{1}) = 27 + n_{1} > 24$. This contradiction implies that there is a triple of sequentially adjacent quadrilateral faces $F_{1}$, $F_{2}$, $F_{3}$, where $F_{2}$ is adjacent to $F_{1}$ and $F_{3}$

Let us consider the union $P_{2} = P \cup_{F_{2}} P$ of two copies of $P$ along $F_{2}$. Then the doubled faces $F_{1}$ and $F_{3}$ of the polyhedron $P$ will give two hexagonal faces in the polyhedron $P_{2}$. The total number of vertices in $P_{2}$ is $2N-4
$. We apply the upper bound from Theorem~\ref{theorem2.2} to $P_{2}$ and the indicated hexagonal faces: 
$$
2 \operatorname{vol} (P) < \left( 2 N - 4 - \frac{6}{2} - \frac{6}{2} \right) \frac{v_{8}}{2}, 
$$
from where we receive
$$
\operatorname{vol} (P) < \left( N - 5 \right) \frac{v_{8}}{2}, 
$$
this is exactly what we needed to prove.
\end{proof}  

The following statement describes the structure of the initial part of the set of volumes of ideal right-angled polyhedra.

\begin{proposition} \label{prop-2}
The volume values of ideal right-angled hyperbolic polyhedra not exceeding $5 v_{8}$ are listed in Table ~\ref{table-100}. The number of such values is $248$.
\end{proposition}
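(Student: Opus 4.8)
The plan is to reduce the assertion to a finite, explicit computation and then to certify that computation. The reduction is driven by the lower bound of Theorem~\ref{theoremAtkinson}: if $P$ has $N$ vertices then $\operatorname{vol}(P) \ge (N-2)\,v_8/4$, and since $F = N+2$ this reads $\operatorname{vol}(P) \ge (F-4)\,v_8/4$. Imposing $\operatorname{vol}(P) \le 5 v_8$ therefore forces $F-4 \le 20$, i.e. $F \le 24$. The two borderline face-numbers are excluded separately. For $F = 24$ (that is $N = 22$) the lower bound equals exactly $5 v_8$, and since equality in Theorem~\ref{theoremAtkinson} is attained only by the octahedron, every such $P$ satisfies $\operatorname{vol}(P) > 5 v_8$. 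For $F = 23$, which is odd, Proposition~\ref{prop} together with Theorem~\ref{theorem2.5} identifies the minimal-volume example as the twisted antiprism $A(10)^{*}$, whose volume $\operatorname{vol}(A(9)) + v_8 = 19.597\ldots$ already exceeds $5 v_8 = 18.319\ldots$. Hence every ideal right-angled polyhedron of volume at most $5 v_8$ has at most $22$ faces.

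Next I would enumerate, for each $F \in \{8,10,11,\ldots,22\}$, all combinatorial types of ideal right-angled polyhedra with $F$ faces. By the analysis around Theorem~\ref{theorem2.2} these are the $4$-valent $3$-connected planar graphs whose duals are the simple sphere quadrangulations of the class $\mathscr Q_4$, and each such graph is realizable in $\mathbb H^3$ by Andreev's theorem (Theorem~\ref{theoremAndreev}) or equivalently by Rivin's theorem (Theorem~\ref{theoremRivin}). This is precisely the enumeration already carried out with {\tt plantri} for Table~\ref{table-2}, so the list of candidates is finite and explicit; its total size is bounded by the entries of that table. For each candidate I would compute the volume with the modified {\tt SnapPea} routine, discard those exceeding $5 v_8$, and merge the remainder into a single sorted list in which each numerical value is recorded only once.

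The delicate point, and the genuine obstacle, is not the reduction but the \emph{certification of distinctness}, since the claim counts precisely $248$ \emph{different} values below $5 v_8$. Because each volume is a sum of values of the Lobachevsky function $\Lambda$ at angles that themselves solve the gluing equations, two combinatorially distinct polyhedra can in principle have volumes that agree to many digits, and raw floating-point output cannot by itself decide whether such a near-coincidence is an exact equality or a genuine separation. To make the count rigorous I would recompute every borderline volume to high, verified precision --- for instance with interval arithmetic applied to $\Lambda$ and to certified solutions of the edge equations --- so that any two values declared equal provably agree within a rigorous error bound, while any two declared distinct are provably separated by more than twice that bound; values lying within the error bound of the threshold $5 v_8$ would be resolved the same way. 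Granting this numerical certification, the sorted list contains exactly $248$ entries, which is the content of Table~\ref{table-100}.
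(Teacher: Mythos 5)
Your proposal is correct and takes essentially the same route as the paper: the lower bound of Theorem~\ref{theoremAtkinson} gives $\operatorname{vol}(P) \geq (F-4)\,v_{8}/4$, which confines any polyhedron of volume at most $5v_{8}$ to small face numbers, and the count of $248$ values then rests on the same finite {\tt plantri}/{\tt SnapPea} computation behind Table~\ref{table-2}. Your refinements --- disposing of the borderline cases $F=24$ (via the equality case of Atkinson's bound) and $F=23$ (via Proposition~\ref{prop}) instead of simply computing through $F\leq 23$, and demanding interval-arithmetic certification that the $248$ values are pairwise distinct --- add rigor the paper's proof does not attempt, since it just truncates at $F\geq 24$ and cites direct floating-point calculation to $10^{-6}$.
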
 

\begin{proof}
By virtue of a lower bound from Theorem~\ref {theoremAtkinson}, if the number of faces of the ideal right-angled polyhedron $P$ is $F$ (hence the number of its vertices is $F-2 $), then for its volume the lower bound holds: 
$$
(F-4) \cdot \frac{v_8}{4} \leqslant \operatorname{vol} (P). 
$$
If $F\geq 24$ then this bound is at least $5 v_ {8} $. Thus, the volume of any polyhedron with at least $24$ faces is bounded below by $5 v_ {8}=18.319312$, where the approximate value is indicated on the right-hand side. Direct calculations of volumes of polyhedra with at most  $23$ faces show that the number of values of volumes not exceeding $5 v_{8}$ is $248$, All of them are listed in Table~\ref{table-100}. 
\end{proof}

\begin{table}[!ht] \caption{The first 248 values of volumes.} \label{table-100} 
\begin{center} 
{\tiny
\begin{tabular}{|r|r||r|r||r|r||r|r||r|r|} \hline \tt 
	1 & 3,663863 & 51 & 15,46561 & 101 & 16,735095 & 151 & 17,477141 & 201 & 17,974896 \\ \hline
	2 & 6,023046 & 52 & 15,478658 & 102 & 16,744556 & 152 & 17,509421 & 202 & 17,98967 \\ \hline
	3 & 7,327725 & 53 & 15,495403 & 103 & 16,750301 & 153 & 17,516143 & 203 & 18,009307 \\ \hline
	4 & 8,137885 & 54 & 15,546518 & 104 & 16,755495 & 154 & 17,517167 & 204 & 18,026172 \\ \hline
	5 & 8,612415 & 55 & 15,654866 & 105 & 16,769779 & 155 & 17,52091 & 205 & 18,038106 \\ \hline
	6 & 9,686908 & 56 & 15,655017 & 106 & 16,780195 & 156 & 17,528985 & 206 & 18,045655 \\ \hline
	7 & 10,149416 & 57 & 15,709955 & 107 & 16,798534 & 157 & 17,530777 & 207 & 18,047625  \\ \hline
	8 & 10,806002 & 58 & 15,720116 & 108 & 16,805953 & 158 & 17,548392 & 208 & 18,058361 \\ \hline
	9 & 10,991587 & 59 & 15,77016 & 109 & 16,829048 & 159 & 17,55096 & 209 & 18,063652 \\ \hline
	10& 11,136296 & 60 & 15,795313 & 110 & 16,83204 & 160 & 17,558575 & 210 & 18,063815 \\ \hline
	11 & 11,447207 & 61 & 15,803436 & 111 & 16,855785 & 161 & 17,571217 & 211 & 18,069138 \\ \hline
	12& 11,801747 & 62 & 15,8569 & 112 & 16,864012 & 162 & 17,577434 & 212 & 18,084139 \\ \hline
	13 & 12,106298 & 63 & 15,85949 & 113 & 16,896062 & 163 & 17,5839 & 213 & 18,08961 \\ \hline
	14 & 12,276278 & 64 & 15,933385 & 114 & 16,961302 & 164 & 17,600432 & 214 & 18,092676 \\ \hline
	15 & 12,414155 & 65 & 15,94014 & 115 & 16,974442 & 165 & 17,615398 & 215 & 18,099757 \\ \hline
	16 & 12,46092 & 66 & 15,958101 & 116 & 16,98803 & 166 & 17,616542 & 216 & 18,109351 \\ \hline
	17 & 12,611908 & 67 & 15,959551 & 117 & 17,004375 & 167 & 17,633184 & 217 & 18,109786  \\ \hline
	18 & 12,854902 & 68 & 15,996629 & 118 & 17,014633 & 168 & 17,671046 & 218 & 18,128273  \\ \hline
	19 & 12,883862 & 69 & 16,049989 & 119 & 17,024507 & 169 & 17,694323 & 219 & 18,129371 \\ \hline
	20 & 13,020639 & 70 & 16,061517 & 120 & 17,061166 & 170 & 17,701559 & 220 & 18,133727 \\ \hline
	21 & 13,310579 & 71 & 16,078017 & 121 & 17,061237 & 171 & 17,70449 & 221 & 18,144299 \\ \hline
	22 & 13,350771 & 72 & 16,158579 & 122 & 17,061342 & 172 & 17,709902 & 222 & 18,152718 \\ \hline
	23 & 13,447108 & 73 & 16,172462 & 123 & 17,110971 & 173 & 17,712742 & 223 & 18,15859 \\ \hline
	24 & 13,677298 & 74 & 16,213678 & 124 & 17,140322 & 174 & 17,740113 & 224 & 18,167534 \\ \hline
	25 & 13,714015 & 75 & 16,27577 & 125 & 17,159342 & 175 & 17,751064 & 225 & 18,1677 \\ \hline
	26 & 13,813278 & 76 & 16,295989 & 126 & 17,165397 & 176 & 17,759743 & 226 & 18,173199 \\ \hline
	27 & 13,907355 & 77 & 16,324638 & 127 &17,169868 & 177 & 17,766925 & 227 & 18,175729 \\ \hline
	28 & 14,030461 & 78 & 16,330917 & 128 & 17,174806 & 178 & 17,766983 & 228 & 18,180264 \\ \hline
	29 & 14,103121 & 79 & 16,331571 & 129 & 17,19799 & 179 & 17,769525 & 229  & 18,180633 \\ \hline
	30 & 14,160931 & 80 & 16,339295 & 130 & 17,199831 & 180 & 17,773653 & 230 & 18,207313 \\ \hline
	31 & 14,171606 & 81 & 16,382246 & 131 & 17,201332 & 181 & 17,790452 & 231 & 18,21988 \\ \hline
	32 & 14,273414 & 82 & 16,39832 & 132 & 17,22483 & 182 & 17,812693 & 232 & 18,233526 \\ \hline
	33 & 14,469865 & 83 & 16,448631 &133 & 17,233217 & 183 & 17,821704 & 233 & 18,234257 \\ \hline
	34 & 14,494727 & 84 & 16,465777 & 134 & 17,238195 & 184 & 17,824793 & 234 & 18,244844 \\ \hline
	35 & 14,635461 & 85 & 16,48952 & 135 & 17,280423 & 185 & 17,835469 & 235 & 18,247553 \\ \hline
	36 & 14,655449 & 86 & 16,49154 & 136 & 17,303311 & 186 & 17,83745 & 236 & 18,276848 \\ \hline
	37 & 14,766948 & 87 & 16,506891 & 137 & 17,324068 & 187 & 17,844054 & 237 & 18,281813 \\ \hline
	38 & 14,800159 & 88 & 16,518764 & 138 & 17,341161 & 188 & 17,845073 & 238 & 18,287301 \\ \hline
	39 & 14,832681 & 89 & 16,535273 & 139 & 17,342423 & 189 & 17,857212 & 239 & 18,28917 \\ \hline
	40 & 14,898794 & 90 & 16,538867 & 140 & 17,354288 & 190 & 17,860804 & 240 & 18,291323  \\ \hline
	41 & 15,031667 & 91 & 16,547725 & 141 & 17,354866 & 191 & 17,864013 & 241& 18,292895  \\ \hline
	42 & 15,052463 & 92 & 16,575188 & 142 & 17,362724 & 192 & 17,864685 & 242 & 18,299323 \\ \hline
	43 & 15,07859 & 93 & 16,595363 & 143 & 17,377493 & 193 & 17,894018 &  243 & 18,300817 \\ \hline
	44 & 15,11107 & 94 & 16,605736 & 144 & 17,377877 & 194 & 17,899631 & 244 & 18,304268 \\ \hline
	45 & 15,126498 & 95 & 16,615815 & 145 & 17,38534 & 195 & 17,901906 & 245 & 18,307302 \\ \hline
	46 & 15,169623 & 96 & 16,627568 & 146 & 17,420943 & 196 & 17,907162 & 246 & 18,31334 \\ \hline
	47 & 15,253393 & 97 & 16,657287 & 147 & 17,429408 & 197 & 17,918936 & 247 & 18,316267 \\ \hline
	48 & 15,323216 & 98 & 16,678106 & 148 & 17,45025 &198 & 17,922791 &  248 & 18,319312 \\ \hline
	49 & 15,350907 & 99 & 16,684502 & 149 & 17,470253 & 199 & 17,937276 & &  \\ \hline
	50 & 15,367058 & 100 & 16,726449 & 150 & 17,470735 & 200 & 17,944583 & &  \\ \hline
\end{tabular} 
}
\end{center}
\end{table} 

\section{Polyhedra with isolated triangles} \label{sec4} 

Recall that an ideal right-angled polyhedron has at least eight triangles. In the case of an octahedron, each triangle is adjacent to three other triangles along sides. From Table~\ref{table-40} and~\ref{table-50} one can see that with the increasing the number of faces of polyhedra with maximum volume, the triangles move away from each other more and more. From the presence of common sides, the situation changes towards the presence of common vertices. The question appears when polyhedra arise in which all triangular faces are isolated, that is, no two triangular faces have common vertices. In this case we will call the polyhedron \emph{ITR-polyhedron}, emphasizing that it satisfies the isolated triangles rule.

\begin{proposition} \label{prop4.1} 
Let $P$ be an ideal right-angled polyhedron in Lobachevsky space having $N$ faces. Denote by $p_{3}$ the number of its triangular faces. If $N < 3p_{3} +2$, then $P$ is not ITR-polyhedron.
\end{proposition}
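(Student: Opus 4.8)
The plan is to prove the contrapositive: assuming that $P$ is an ITR-polyhedron, I will show that $N \geq 3 p_{3} + 2$, which directly contradicts the hypothesis $N < 3 p_{3} + 2$ and hence forces $P$ not to be an ITR-polyhedron.

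First I would record the elementary vertex count for the class $\mathcal{IR}$. Since every vertex of an ideal right-angled polyhedron is $4$-valent, the relation $F = V + 2$ derived from Euler's formula in Section~\ref{sec1} applies; writing $F = N$ for the number of faces, this becomes $V = N - 2$, where $V$ denotes the number of vertices of $P$.

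The heart of the argument is a disjointness count on the vertices carried by the triangular faces. Each triangular face has exactly three vertices, so the incidences between triangles and vertices number $3 p_{3}$. By the definition of an ITR-polyhedron, no two triangular faces have a common vertex; equivalently, each vertex of $P$ lies on at most one triangle. Therefore the three-element vertex sets of the $p_{3}$ triangles are pairwise disjoint, and the number of distinct vertices incident to some triangular face is exactly $3 p_{3}$. As these vertices form a subset of all $V$ vertices of $P$, we obtain $V \geq 3 p_{3}$.

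Combining the two observations gives $N - 2 = V \geq 3 p_{3}$, that is, $N \geq 3 p_{3} + 2$; passing to the contrapositive yields the proposition. I do not expect any serious obstacle in this argument. The only point that deserves explicit care is the step translating the ITR condition (no shared vertices between triangles) into the statement that each vertex lies on at most one triangle, which is what guarantees that the triangle--vertex incidences are counted without multiplicity and hence that $3p_3$ genuinely counts distinct vertices.
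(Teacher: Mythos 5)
Your proof is correct, and it reaches the paper's inequality by a more direct route than the paper itself. You combine the relation $F = V+2$ (Euler's formula plus $4$-valence, already derived in Section~\ref{sec1}) with the observation that the ITR condition makes the vertex sets of the $p_{3}$ triangles pairwise disjoint, so that $N-2 = V \geq 3p_{3}$. The paper argues instead by an incidence count on the non-triangular faces: since each vertex of an isolated triangle is $4$-valent and the three other faces at it are non-triangular, the triangles require $9p_{3}$ vertex-slots among the non-triangular faces, while the total supply of such slots is
$$
\sum_{k \geq 4} k\, p_{k} = 4N - 3p_{3} - 8,
$$
computed via formula~(\ref{formulamain}); demanding $4N - 3p_{3} - 8 \geq 9p_{3}$ gives the same bound. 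The two arguments are in fact equivalent after simplification: since $\sum_{k\geq 3} k p_{k} = 4V$ and $V = N-2$, the paper's inequality reduces exactly to your $V \geq 3p_{3}$. What your version buys is economy and transparency: you avoid invoking formula~(\ref{formulamain}) and the slot-counting altogether, and you isolate the single combinatorial input (disjointness of triangle vertex sets) on which the bound rests. The paper's version keeps the face-size bookkeeping explicit, which has the minor virtue of exhibiting where each of the $9p_{3}$ required incidences lives, but it proves nothing stronger. One small point you handled correctly and should keep: the translation of "no two triangular faces share a vertex" into "each vertex lies on at most one triangle" is precisely what licenses counting the $3p_{3}$ triangle--vertex incidences without multiplicity.
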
 

\begin{proof} 
Let $n$ be the maximum number of edges in faces of the polyhedron $P$. 
Denote by $p_{k}$, $k=3, \ldots , n$, the number of $k$-gonal faces in $P$. Then $\sum_{k=3}^{n} p_{k} = N$. Recall that by the formula~(\ref{formulamain}), $p_{3} = 8 + \sum_{k=4}^{n} p_{k} (k-4)$. Assume, on the contrary, that $P$ is an ITR-polyhedron. So at each vertex of the triangle there are meet three more vertices  related to the faces that are not triangular. The number of vertices of all triangles is $3 p_{3}$. So, the number of vertices in all remaining polygons must be at least $9 p_{3}$. Let us calculate this number:
$$
\begin{gathered} 
\sum_{k=4}^{n} k p_{k} = 4 p_{4} + \sum_{k=5}^{n} k p_{k} = 4 p_{4} + \sum_{k=5}^{n} p_{k} (k-4) + 4 \sum_{k=5}^{n} p_{k} \cr 
= p_{3} - 8 + 4 \sum_{k=4}^{n} p_{k} = p_{3} - 8 + 4 (N-p_{3}) = 4 N - 3 p_{3} - 8. 
\end{gathered} 
$$
Demanding inequality
$$
4 N - 3 p_{3} - 8 \geq 9 p_{3}, 
$$
we get
$$
N \geq 3p_{3} + 2,
$$
which contradicts the original condition. Therefore, for $N < 3 p_{3} +2$ the polyhedron $P$ cannot have isolated triangles.
\end{proof} 

Since the smallest possible value of $p_ {3}$ is $8$, there are no polyhedra with isolated triangles among polyhedra with at most  25 faces. But among the 26-faced polyhedra there are two such examples, which are shown in Figure~\ref{fig900}.
\begin{figure}[!ht]
\centering
\includegraphics[width=0.49\textwidth]{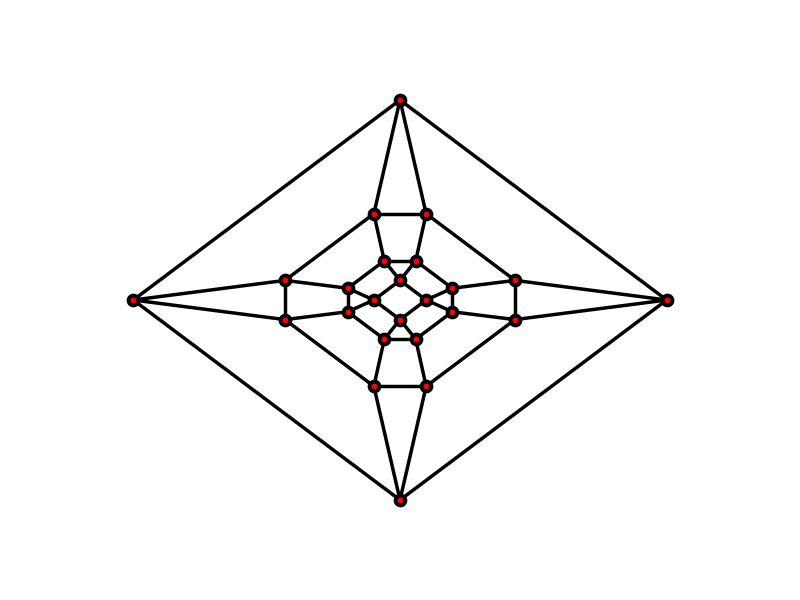}
\includegraphics[width=0.49\textwidth]{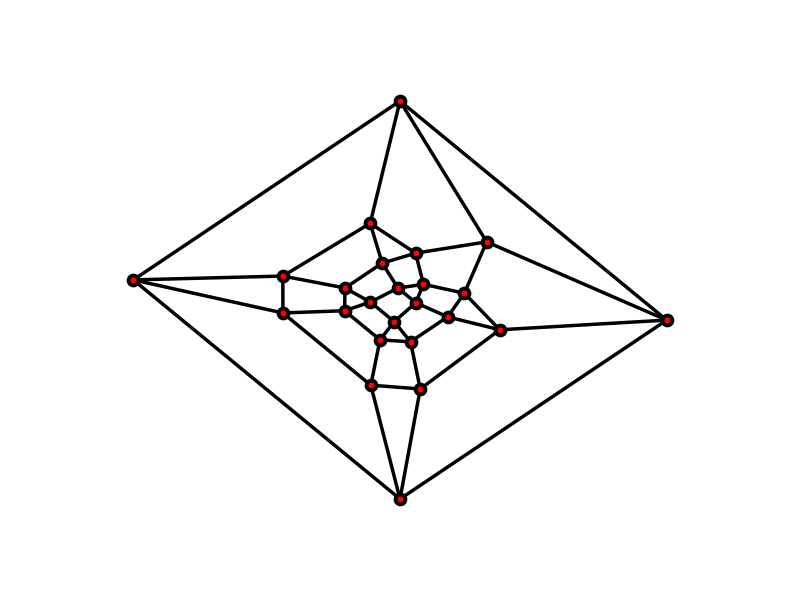} 
\caption{26-gonal polyhedra with isolated triangles} \label{fig900}
\end{figure}
The volume of the ITR-polyhedron shown  on the left-hand side is $31.0930375$, and the volume of the ITR-polyhedron shown on the right-hand side is $31.1668675$.

\begin{proposition}  \label{prop4.2} 
There are infinitely many ITR-polyhedra.
\end{proposition}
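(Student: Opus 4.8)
The plan is to exhibit an explicit infinite family of ideal right-angled polyhedra in which every triangular face is isolated, and to verify the ITR property combinatorially. The most economical route is to start from the two $26$-faced examples already displayed in Figure~\ref{fig900} and to produce larger ITR-polyhedra from a given one by a volume-preserving-type combinatorial enlargement that does not create new adjacencies or shared vertices among triangles. Concretely, I would take an ITR-polyhedron $P$ and glue a copy of a second ideal right-angled polyhedron $Q$ along a non-triangular face $F$ of $P$ (by Proposition~\ref{prop4.1} such $F$ exists, indeed for an ITR-polyhedron every face incident to a triangle must be a polygon with at least four sides). If $F$ is an $n$-gon and the attached face of $Q$ is an isometric $n$-gon, then the gluing is again an ideal right-angled polyhedron, because all dihedral angles along the seam become $\pi/2 + \pi/2 = \pi$, so the seam edges disappear and the identified faces flatten — exactly the mechanism used in the proof of Lemma~\ref{lemma2.1}.

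The key steps, in order, are as follows. First I would fix one of the two $26$-faced ITR-polyhedra as the base case, establishing the family is nonempty with a genuine ITR member. Second, I would specify the building block $Q$ to be used for enlargement; the cleanest choice is another copy of $P$ itself, so that $P_{k+1} = P_k \cup_F P_k$ along a chosen quadrilateral (or larger) face, or alternatively a fixed antiprism $A(m)$ attached along an $m$-gonal face. Third — and this is the combinatorial heart — I would verify that the gluing introduces no triangle that shares a vertex with another triangle. The point is that the triangles of each summand survive the gluing unchanged (they are disjoint from the seam face $F$, since in an ITR-polyhedron triangles are isolated and hence not incident to the large glued face), so no triangle near the seam is altered, and the two copies of the triangle-sets lie on opposite sides of the identified face without acquiring common vertices.

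The main obstacle will be step three: I must ensure that identifying $F$ does not accidentally merge two triangles, or create a vertex at which a triangle from one copy meets a triangle from the other. This requires choosing the seam face $F$ so that none of its vertices is incident to a triangle — equivalently, $F$ is surrounded entirely by faces of size $\geq 4$. I would argue that such a face exists in the displayed $26$-faced examples (this is a finite check on the two diagrams in Figure~\ref{fig900}), and that the property of having such a ``triangle-free-neighborhood'' face is inherited by the enlarged polyhedron, so the induction can continue. A secondary point to confirm is that the resulting polyhedra are genuinely distinct and unboundedly large: each gluing strictly increases the number of faces (by the number of faces of $Q$ minus two), so the family contains members with arbitrarily many faces and is therefore infinite.

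Finally I would record that each $P_k$ so constructed lies in $\mathcal{IR}$ by Andreev's theorem (Theorem~\ref{theoremAndreev}) or Rivin's theorem (Theorem~\ref{theoremRivin}), since every vertex remains $4$-valent and every dihedral angle remains $\pi/2$, and that each $P_k$ satisfies the isolated-triangles rule by the inductive argument above. This yields infinitely many ITR-polyhedra, completing the proof.
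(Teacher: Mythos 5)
Your gluing mechanism is sound, but the choice of seam face is where the argument breaks, and it breaks already at your base case. You correctly identify that the seam face $F$ must have no vertex incident to a triangle (otherwise the triangle sitting vertex-opposite to $F$ at such a vertex doubles into two mirror triangles sharing that vertex, destroying the ITR property), and you propose to verify by a finite check that the $26$-faced examples of Figure~\ref{fig900} contain such a face. They do not, and this is forced by counting rather than by inspecting the diagrams: a $26$-faced polyhedron in $\mathcal{IR}$ has $V = 26 - 2 = 24$ ideal vertices, and since $N = 26$ attains the bound $N \geq 3p_{3} + 2$ of Proposition~\ref{prop4.1}, these polyhedra have exactly $p_{3} = 8$ triangles; isolation makes the triangles' vertex sets pairwise disjoint, so they occupy $8 \cdot 3 = 24$ distinct vertices, i.e.\ \emph{every} vertex of these polyhedra lies on a triangle. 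Hence no face of either example has a triangle-free neighborhood, your induction has no base, and your earlier assertion that the triangles are ``not incident to the large glued face'' because they are isolated is a non sequitur in any case: isolation forbids triangle--triangle contact, not contact between a triangle and $F$.

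The paper escapes exactly where you are stuck, by gluing along a \emph{triangular} face $F_{1}$ instead of a large one. The ITR property itself then supplies the triangle-free seam neighborhood for free: every face meeting $F_{1}$, along an edge or merely at a vertex, is automatically a $k$-gon with $k \geq 4$, since a triangle meeting $F_{1}$ would violate isolation. In the double $P_{2} = P_{1} \cup_{F_{1}} P_{1}$ the face $F_{1}$ disappears (the dihedral angles along its edges become $\pi$, and the three edge-adjacent $n_{i}$-gons, $n_{i} \geq 4$, merge pairwise into $(2n_{i}-2)$-gons, by the same mechanism as in Lemma~\ref{lemma2.1}); all remaining triangles are untouched, and no two of them acquire a common vertex because none of them meets the seam. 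This yields an ITR-polyhedron with $2p_{3}(P_{1}) - 2$ triangles and strictly more faces, and iterating gives the infinite family. To repair your proof you must either switch the seam to a triangular face as above, or first manufacture some larger ITR-polyhedron possessing a face all of whose vertices avoid triangles — which, as shown, the minimal $26$-faced examples provably cannot provide.
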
 

\begin{proof} 
Let $P_{1}$ be an ITR-polyhedron (for example, it can be taken any of the polyhedra shown in Figure~\ref{fig900}) and let $F_{1}$ be one of its triangular faces.  Consider the polyhedron $P_{2} = P_{1} \cup_{F_{1}} P_{1}$ obtained by gluing two copies of the polyhedron $P_{1}$ along the face $F_{1}$. Obviously, the polyhedron $P_{2}$ will also be an ideal right-angled polyhedron with isolated triangles. Suppose, for certainty, that the faces of the polyhedron $P_{1}$, which have common sides with $F_{1}$, have $n_{1}$, $n_{2}$, $n_{3}$ sides, where $n_{i} \geq 4$, $i=1,2,3$, by virtue of the triangle isolation property. When these faces are doubled, they will turn into faces of a polyhedron $P_{2}$, having, respectively, $2n_{i}-2$ sides, $i=1, 2, 3$. If $P_{1}$ have $p_{3} (P_{1})$ triangular faces, then $P_{2}$ will have $p_{3} (P_{2}) = 2 p_{3} (P_{1}) - 2 = p_{3} (P_{1}) + \sum_{i=1}^{3} (2n_{i} - 6)$ triangular faces.
\end{proof}

\end{document}